\documentclass[12pt]{amsart}

\usepackage{amssymb,amsthm,amsmath}

\RequirePackage[dvipsnames,usenames]{color}
\input{kmacros3.sty}
\usepackage{mabliautoref}

\usepackage{tikz}
\usepackage{tikz-cd}
\usetikzlibrary{cd}
\usepackage{graphicx}
\usepackage[all,cmtip]{xy}

\usepackage{enumitem}
\numberwithin{equation}{theorem}

\usepackage{bm}
\usepackage{pifont}
\usepackage{upgreek}

\usepackage{eucal}
\usepackage{ulem}
\usepackage{stmaryrd}
\usepackage{xspace}

\usepackage{fullpage}

\usepackage{calc}

\usepackage{verbatim}
\usepackage{alltt}
\usepackage{scalerel}

\def\todo#1{\textcolor{red}%
{\footnotesize\newline{\color{red}\fbox{\parbox{\textwidth-15pt}{\textbf{todo: } #1}}}\newline}}

\renewcommand{\O}{\mathcal O}

\DeclareMathOperator{\KH}{KH}
\DeclareMathOperator{\Hir}{Hir}
\DeclareMathOperator{\BEH}{LHir}
\DeclareMathOperator{\BE}{L}

\begin{document}

\title{Thresholds of singularities in characteristic zero}
\author{Sandra Rodr\'iguez-Villalobos}
\address{Department of Mathematics, University of Utah, Salt Lake City, UT 84112, USA}
\email{rodriguez@math.utah.edu}
\author{Karl Schwede}
\address{Department of Mathematics, University of Utah, Salt Lake City, UT 84112, USA}
\email{schwede@math.utah.edu}
\begin{abstract}
    We study properties of characteristic zero variants of Frobenius thresholds $c^J(\fra)$ inspired by the work of Epstein-McDonald-R.G. and the second author.  Suppose $R$ is an excellent domain of equal characteristic zero with a dualizing complex and $\fra, J \subseteq R$ are nonzero ideals with $\fra \subseteq \sqrt{J}$.  By using a log resolution of singularities $Y$ of $(R, \fra)$ as well as the derived global sections of a line bundle on $Y$, we construct two analogs of the Frobenius threshold, the Hironaka-threshold and the Koszul-Hironaka threshold.  We show that these two thresholds agree for parameter ideals and are always rational numbers.  We prove that these thresholds also share many of the properties of the classical Frobenius threshold, especially for parameter ideals.  If $R$ is regular, we show that the set of thresholds coincides with the set of possible multiplier-ideal-jumping numbers.  Finally, we show that for parameter ideals in a Kawamata log terminal ring, our thresholds coincide with the limit of the Frobenius thresholds of the mod-$p$-reductions as $p$ goes to infinity.  There is even a version of this without that Kawamata log terminal hypothesis.
\end{abstract}
\maketitle

\section{Introduction}

Inspired by the notion of log canonical thresholds from characteristic zero, in a Noetherian ring of characteristic $p > 0$, Huneke-\mustata-Takagi-Watanabe (\cite{HunekeMustataTakagiWatanabeFThresholdsTightClosureIntClosureMultBounds} ) defined the notion of the \emph{Frobenius threshold of an ideal $\fra$} relative to another ideal $J$ (with $\fra \subseteq \sqrt{J}$).  In the case that $(R, \fram)$ is regular local and $J = \fram$, this coincides with the $F$-pure threshold, the supremum of $\lambda$ such that $(R, \fra^{\lambda})$ is $F$-pure, but the definition makes sense more generally (\cite{MustataTakagiWatanabeFThresholdsAndBernsteinSato,TakagiWatanabeFPureThresh}).  Indeed, supposing $\fra, J$ are nonzero ideals in an $F$-finite Noetherian domain with $\fra \subseteq \sqrt{J}$, set 
$\nu_{\fra}^J(p^e) = \max\{t \in \bN \;|\; \fra^t \nsubseteq J^{[p^e]} \}$. Then the $F$-threshold is defined to be 
\[
    c^J(\fra) := \lim_{e \to \infty} {\nu_{\fra}^J(p^e) \over p^e},
\]
a limit which we know exists thanks to \cite{DeStefaniNunezBetancourtPerezFThresholdsAndRelated}.  A related variant replaces $J^{[p^e]}$ with its tight closure $(J^{[p^e]})^*$ in the definition of $\nu_{\fra}^J(p^e)$, and obtains a limit we denote by $c_*^J(\fra)$ (\cite[Definition 8.5]{MengMukhopadhyay.hFunctionHKDensityFrobeniusPoincare}).  Under moderate hypotheses, that variant can also be interpreted as the infimum of $t \in \bR$ such that $(\fra R^+)_{>t} \subseteq J B$ where $B$ is a sufficiently large balanced big Cohen-Macaulay $R^+$-algebra, see \cite[Corollary 3.9]{RodriguezSchwede.BCMThresholdsNonPrincipal}.
We refer readers to \cite{TrivediWatanabeHilbertKunzDensityFunctions,Trivdei.FThresholdsForProjectiveCurves,BadillaCespedesNunezBetantcourtRodriguezVillalobos.FVolumes,GonzalezJaramillo-VelezNunezBetancourt.FThresholdsAndTestIdealsOfThomSebastiani,JeffriesNunezBetancourtQuinlanGallego.BSTheoryForSingularPositive, MengMukhopadhyay.hFunctionHKDensityFrobeniusPoincare} for other recent results and computations related to $F$-thresholds and generalizations thereof.

While $F$-thresholds are generalizations of $F$-pure thresholds, which were inspired by the log canonical threshold from characteristic zero, there seems to be no characteristic zero version of $F$-thresholds.  The point of this paper is to explore two potential definitions of a characteristic zero analog of the $F$-threshold (or more precisely, $c_*^J(\fra)$) which are defined using the tools of characteristic zero, namely log resolutions of singularities and regular alterations.  

Roughly, our idea is that instead of using Frobenius or a sufficiently large big Cohen-Macaulay $R^+$-algebra $B$ (as in \cite{RodriguezSchwede.BCMThresholdsNonPrincipal}), we should use the differential graded algebra $\myR\Gamma(Z, \cO_Z)$ where $Z \to \Spec R$ is a log resolution of singularities (or log alteration), since $\myR\Gamma(Z, \cO_Z)$ is a Cohen-Macaulay object in the derived category $D(R)$ and since $Z$ can be viewed as some philosophical replacement for $B$.  To solve the question of how to expand the ideal $J$ to this algebra we follow the ideas of \cite{EpsteinMcDonaldRGSchwede} but we expand $\fra^t$ in a different way already common in the theory of multiplier ideals and singularities of the minimal model program.

  Suppose $R$ is an excellent domain of equal characteristic zero with a dualizing complex and that $\fra, J$ are nonzero ideals with $\fra \subseteq \sqrt{J}$.  Set $Z \to \Spec R$ to be a log resolution of $(R, \fra)$ with $\fra \cO_Z = \cO_Z(-D)$.  First we define the \emph{derived Hironaka threshold} to be 
 \[
        c_{\Hir}^J(\fra) = \inf\{ t \geq 0 \;|\; R \to R/J \otimes_{R}^{\myL} \myR\Gamma(\cO_Z(\lfloor t D \rfloor)) \text{ is zero in the derived category }\}.
    \]        
    Next fix a set of generators $J = (x_1, \dots, x_n)$.  Set $\Kos_{\bullet}(\underline{x})$ to be the Koszul complex on those generators.  Then we define the \emph{derived Koszul-Hironaka threshold} to be
    \[
        c_{\KH}^{J}(\fra) = \inf\{ t \geq 0 \;|\; R \to \Kos_{\bullet}(\underline{x}) \otimes_{R}^{\myL} \myR\Gamma( \cO_Z(\lfloor t D \rfloor)) \text{ is zero in the derived category }\}.
    \]
    It turns out that this is independent of the choice of generators $\underline{x}$, see \autoref{lem.KHThresholdIndependentOfGenerators}.  We also note that $c_{\Hir}^J(\fra) = c_{\Hir}^J(\overline{\fra})$ and likewise $c_{\KH}^J(\fra) = c_{\KH}^J(\overline{\fra})$ where $\overline{\fra}$ denotes the integral closure, since $D$ does not change (\autoref{prop.InitialProperties}).  Furthermore, we see in \autoref{prop.KHClosureAgnostic} that $c_{\KH}^J(\fra) = c_{\KH}^{J^{\KH}}(\fra)$ where $J^{\KH}$ is the Koszul-Hironaka closure of \cite{EpsteinMcDonaldRGSchwede}.
    
    While these definitions look complicated, they are actually quite straightforward to compute using a computer when $R$ is Gorenstein and if one knows the multiplier ideal $\mJ(R, \fra^t)$ as $t$ varies.  See \autoref{rem.ComputingClosures}.  

    Because $\cO_Z(\lfloor t D \rfloor)$ only changes at certain rational numbers, we immediately see that 
    \[ 
        c_{\Hir}^J(\fra), c_{\KH}^{J}(\fra) \in \bQ.
    \]
    Furthermore, the infima above are actually minima.  See \autoref{lem.InfsAreMins}.

In \autoref{prop.TwoDefinitionsAgreeForParameterIdeals} we show that these two notions agree if $J$ is a parameter ideal.  If $R$ is {\bf regular}, then by \autoref{cor.HironakaThresholdViaMultiplierIdeal} we have that 
\[
    c_{\Hir}^J(\fra) = \min \{ t \geq 0 \;|\; \mJ(R, \fra^t) \subseteq J \}.
\]

Using the vanishing theorems associated to resolution of singularities (relative Kawamata-Viehweg vanishing), many standard results for  $F$-thresholds also hold in our characteristic zero setting.  First, we recover a version of \cite[Theorem 3.3]{HunekeMustataTakagiWatanabeFThresholdsTightClosureIntClosureMultBounds}.

\begin{theoremA*}[\autoref{thm.IntegralClosureMainResult}]
    Suppose $R$ is an excellent local domain of equal characteristic zero with a dualizing complex, $x_1, \dots, x_n$ are part of a system of parameters and define $J = (x_1, \dots, x_n) \subsetneq R$.  Given an ideal $I$ with $\sqrt{J} \supseteq I\supseteq J$, we have that $c_{\KH}^J(I) = n$ if and only if $\overline{I}=\overline{J}$.
\end{theoremA*}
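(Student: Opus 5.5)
The plan has three steps: compute $c_{\KH}^J(J) = n$ explicitly, show $c_{\KH}^J(I) \le n$ with equality if and only if the coefficient of some component of $D_I$ over $V(J)$ exceeds the corresponding coefficient of $D_J$, and then translate that coefficient comparison into the integral-closure statement.

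\textbf{Preparation.} Pass to a common log resolution $\pi: Z \to \Spec R$ of $(R, J)$ and $(R, I)$, with $J\cO_Z = \cO_Z(-D_J)$ and $I\cO_Z = \cO_Z(-D_I)$. The inclusion $J \subseteq I$ gives $D_I \le D_J$. By \autoref{prop.InitialProperties} the thresholds depend only on the divisors, so on integral closures. By the valuative criterion, $\overline{I} = \overline{J}$ if and only if $D_I = D_J$ on some resolution, and $\overline{I} \supsetneq \overline{J}$ means that some prime divisor $E$ (exceptional or not) has $\mathrm{ord}_E(D_I) < \mathrm{ord}_E(D_J)$.

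\textbf{Step 1: the value for $J$ itself.} I would first show $c_{\KH}^J(J) = n$, and more generally $c_{\KH}^J(I) \le n$ for $I \supseteq J$. Since $x_1,\dots,x_n$ generate $\cO_Z(-D_J)$, the Koszul complex $\Kos_\bullet(\underline{x}) \otimes \cO_Z(\lfloor tD_J\rfloor)$ is a Koszul complex on a locally principal ideal. Its terms are $\cO_Z(\lfloor tD_J\rfloor - kD_J)$, with the map at each stage given by multiplication by generators of $\cO_Z(-D_J)$. The map from $R$ factors through the degree $0$ piece. Using relative Kawamata--Viehweg vanishing (the $-D_J$ are $\pi$-ample up to a small exceptional perturbation), $\myR\Gamma$ of these twists is concentrated in degree $0$ in the relevant range. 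This turns the question into one about the ideals $\Gamma(\cO_Z(-\lceil (k-t)D_J\rceil))$, in the manner of a Skoda or Brian\c{c}on--Skoda argument. The computation should show that the composite $R \to \Kos\otimes \myR\Gamma(\cO_Z(\lfloor tD\rfloor))$ is nonzero exactly when the top Koszul homology class survives, which happens exactly when $t < n$. For $t \ge n$, the class $1$ maps into the image of $\cO_Z(\lfloor tD\rfloor - nD)\cdot(\text{top})$ and becomes zero. Replacing $D_J$ by the smaller $D_I$ can only make the map vanish earlier, which gives $c_{\KH}^J(I) \le n$. Since $J$ is a parameter ideal, \autoref{prop.TwoDefinitionsAgreeForParameterIdeals} lets me use whichever of the Hironaka and Koszul--Hironaka formulations is more convenient. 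The Hironaka one, $R/J\otimes^{\myL}\myR\Gamma(\cO_Z(\lfloor tD\rfloor))$, is likely cleaner, via a local-cohomology or top-degree check.

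\textbf{Step 2: equality forces equal integral closures.} If $\overline{I} \ne \overline{J}$, I pick a divisor $E$ with $\mathrm{ord}_E D_I < \mathrm{ord}_E D_J$. I then show that for $t$ slightly less than $n$, the obstruction class that witnesses nonvanishing for $J$ already dies for $I$, giving $c_{\KH}^J(I) < n$. Concretely, the nonvanishing for $D_J$ at $t<n$ is detected in the highest cohomology, the local cohomology $H^n_J$ of $\myR\Gamma(\cO_Z(\lfloor tD\rfloor))$, by the socle-type element $[1/(x_1\cdots x_n)]$ pulled back. With $D_I$, the larger twist $\lfloor tD_I\rfloor$ compared against $nD_J$ should let $1$ be written as a Koszul boundary once $t > n\cdot \max_E \mathrm{ord}_E D_J/\mathrm{ord}_E D_I$-type ratios are accounted for. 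Conversely, if $\overline{I} = \overline{J}$ then $D_I = D_J$ and the threshold equals $c_{\KH}^J(J) = n$ by Step 1.

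\textbf{Main obstacle.} The direction $c_{\KH}^J(I)=n \Rightarrow \overline I=\overline J$ is the hardest part, because vanishing for $I$ at some $t<n$ must be produced even though $D_I$ and $D_J$ may differ only along a single divisor $E$. My first attempt would be the following. Express the vanishing of $R \to R/J\otimes^{\myL}\myR\Gamma(\cO_Z(\lfloor tD_I\rfloor))$ as the statement that $1$ maps to zero in $H^0$ after applying $H^n_J$-duality; this uses Grothendieck duality with $\omega_Z$ and the Cohen--Macaulayness of $\myR\Gamma(\omega_Z)$ from Grauert--Riemenschneider. Then use the fact that the pulled-back generators $x_i$ all vanish along $E$ to order strictly greater than $\mathrm{ord}_E D_I$. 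This should let the Koszul cycle representing the generator of $H^n$ be divided by a local equation of $E$, which gives extra room and forces vanishing at some $t<n$. A possible fallback is a Rees-type multiplicity argument: compare Hilbert--Samuel data via \autoref{prop.TwoDefinitionsAgreeForParameterIdeals} and reduce to the classical Rees theorem $e(I)=e(J)\Leftrightarrow \overline I=\overline J$.
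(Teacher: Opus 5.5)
\textbf{There is a genuine gap: the inequality runs the wrong way.}

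You have the monotonicity backwards, and this invalidates both the claim in Step 1 and the whole strategy of Step 2. Since $J\subseteq I$ gives $D_I\le D_J$, you have $\cO_Z(\lfloor tD_I\rfloor)\subseteq\cO_Z(\lfloor tD_J\rfloor)$. So the map $R\to\Kos_\bullet(\underline{x})\otimes^{\myL}\myR\Gamma(\cO_Z(\lfloor tD_J\rfloor))$ factors through the corresponding map for $D_I$. Vanishing for $I$ at $t$ therefore forces vanishing for $J$ at $t$, i.e.\ $c^J_{\KH}(I)\ge c^J_{\KH}(J)=n$; this is \autoref{prop.InitialProperties} \autoref{prop.InitialProperties.Biggera}.

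This has three consequences. Your assertion $c^J_{\KH}(I)\le n$ is false. Your opening criterion (equality iff some coefficient of $D_I$ exceeds that of $D_J$) is incompatible with $D_I\le D_J$. And the goal of Step 2, making the map vanish for $I$ at some $t<n$ when $\overline I\neq\overline J$, is unattainable, since it would make the map vanish for $J$ below $n$. Already for $R=\bQ[x]_{(x)}$, $J=(x^2)$, $I=(x)$ one computes $c^J_{\KH}(J)=1$ but $c^J_{\KH}(I)=2$.

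The real content of $c^J_{\KH}(I)=n\Rightarrow\overline I=\overline J$ is a non-vanishing statement: if $\overline I\supsetneq\overline J$, the map must still be nonzero at $t=n$. Equivalently, from vanishing at $t=n$ for $I$ you must extract $\overline I\subseteq\overline J$. Dividing a Koszul cycle by a local equation of $E$ is aimed at the wrong conclusion. The Rees-multiplicity fallback has no bridge to $c^J_{\KH}$ for a non-regular $R$ and a $J$ that need not be $\fram$-primary.

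\textbf{Step 1 does not prove $c^J_{\KH}(J)=n$.} The terms of $\Kos_\bullet(\underline x)\otimes\cO_Z(\lfloor tD\rfloor)$ are $\bigwedge^k\cO_Z^{\oplus n}\otimes\cO_Z(\lfloor tD\rfloor)$, not $\cO_Z(\lfloor tD\rfloor-kD)$. Also, $\myR\Gamma(\cO_Z(\lfloor tD\rfloor))$ is Grothendieck dual to $\mJ(\omega_R,J^t)$ and is generally not concentrated in degree $0$. Kawamata--Viehweg vanishing concerns $\omega_Z\otimes\cO_Z(-\lfloor tD\rfloor)$, not this sheaf. For instance, for $J=(x,y)\subseteq\bQ[x,y]_{(x,y)}$ and $t=n=2$, one has $H^1(Z,\cO_Z(2E))\neq0$ on the blowup of the origin. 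Finally, the lower bound ("nonzero exactly when $t<n$") is asserted rather than derived.

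\textbf{How the paper gets the lower bound.} It argues by contradiction on the finite cover $S=R[h_1^{1/(Mn)},\dots,h_n^{1/(Mn)}]$, with $h_i$ general generators of $J$. \autoref{prop.ConnectionWithKHForParameterIdeals} gives $h^{(Mn-1)/(Mn)}\in(JS)^{\KH}$. Colon capturing for $\KH$-closure of parameter ideals (Theorem 4.1 of Epstein--McDonald--R.G.--Schwede) then gives $1\in((h_1^{1/(Mn)},\dots,h_n^{1/(Mn)})S)^{\KH}$, contradicting faithfulness. The upper bound is the Brian\c{c}on--Skoda containment $\overline{(JS)^n}\subseteq(JS)^{\KH}$ used in \autoref{lem.UpperBound}.

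\textbf{How the paper handles the hard direction.} It uses the same tools.
\begin{enumerate}
\item $c^J_{\KH}(I)=n$ gives $\overline{IS}\subseteq(JS)^{\KH}:\overline{J^{n-1}S}$ for every finite domain extension $S$.
\item On $T=R[x_1^{1/t},\dots,x_n^{1/t}]$, apply colon capturing against the monomials $x_1^{a_1/t}\cdots x_n^{a_n/t}$ with $\sum a_i=t(n-1)$.
\item The Hochster-type intersection formula \autoref{lem.intersectionBEH} then yields $\overline{IT}\subseteq\overline{(x_1^{1/t},\dots,x_n^{1/t})^{t-n+1}T}$.
\item Contracting to $R$ gives $\overline I\subseteq J_{\ge(t-n+1)/t}$ for every $t$, hence $\overline I\subseteq\overline J$ by finiteness of Rees valuations.
\end{enumerate}
Colon capturing, faithfulness, and that intersection formula are the ideas your proposal is missing.
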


The following tool, used to prove the result above, might be of independent interest.  In \autoref{lem.intersectionBEH} we show the following analog of an observation of Hochster (found in \cite[Section 3]{LipmanTeissierPseudoRational}).  In the context above, we show that for any integer $\lambda > 0$
\[
    \bigcap_{c_1,\ldots, c_n} (x_1^{c_1},\ldots, x_n^{c_n})^{\KH} = \ker \big( R \to H_0(\BE^{\lambda}(\underline{x}) \otimes^{\myL} \myR\Gamma(Z, \cO_Z)) \big)
    \]
where the intersection runs over positive integers $c_i$ such that $c_1 + \dots + c_n = \lambda + n - 1$.  
Here $\BE^{\lambda}(\underline{x})$ is the Buchsbaum-Eisenbud $L$-complex (or equivalently, an appropriate Eagon-Northcott complex).  If $S = \bZ[X_1, \dots, X_n]$, then $L^{\lambda}(\underline{x})$ is quasi-isomorphic to the base change $S/(X_1, \dots, X_n)^{\lambda} \otimes_S^{\myL} R$ (where $S \to R$ is induced by sending $X_i \mapsto x_i$).

We also recover the multiplicity bound of \cite[Theorem 5.6]{HunekeMustataTakagiWatanabeFThresholdsTightClosureIntClosureMultBounds}, see \autoref{prop.MultiplicityBound}.  However, since this is in the regular case and we can describe the threshold via the multiplier ideal, the proof is essentially the same as the original.

We obtain the following analog of \cite[Section 2]{MustataTakagiWatanabeFThresholdsAndBernsteinSato}, compare with \cite{BlickleMustataSmithDiscretenessAndRationalityOfFThresholds} and also \cite[Corollary 5.6]{RodriguezSchwede.BCMThresholdsNonPrincipal}.
\begin{theoremB*}[\autoref{cor.JumpingEqualThresholds}]
    Suppose $R$ is an excellent regular domain of equal characteristic zero  with a dualizing complex.  Suppose that $\fra \subseteq R$ is a proper nonzero ideal.  Then the set of jumping numbers for $\fra$ is the same as the set of derived Hironaka thresholds for $\fra$ as ideals $J$, such that $\fra \subseteq \sqrt{J}$, vary.
\end{theoremB*}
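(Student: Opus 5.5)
The plan is to reduce everything to multiplier ideals using \autoref{cor.HironakaThresholdViaMultiplierIdeal}. Since $R$ is regular, for any nonzero $J$ with $\fra \subseteq \sqrt{J}$ that result gives
\[
c_{\Hir}^J(\fra) = \min\{ t \geq 0 \;|\; \mJ(R, \fra^t) \subseteq J\}.
\]
The statement then becomes a purely multiplier-ideal assertion: the set of such minima, as $J$ varies, is exactly the set of jumping numbers of $\fra$. I would use the standard facts about multiplier ideals. The ideals $\mJ(R,\fra^t)$ are decreasing in $t$ and right continuous, meaning $\mJ(\fra^{t}) = \mJ(\fra^{t+\epsilon})$ for small $\epsilon > 0$. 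The jumping numbers are the $\lambda > 0$ with $\mJ(\fra^{\lambda}) \neq \mJ(\fra^{\lambda - \epsilon})$ for all $\epsilon>0$. All of this can be read off from the log resolution $Z$, since $\mJ(\fra^t) = \pi_*\cO_Z(K_{Z/R} - \lfloor tD \rfloor)$.

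\emph{Thresholds are jumping numbers.} Fix $J$ and let $c = c_{\Hir}^J(\fra)$. The minimum is attained, by \autoref{lem.InfsAreMins} or directly by right continuity. First I check $c > 0$: $\mJ(\fra^0) = R$, and $J$ is proper, because $\fra$ is proper and $\fra \subseteq \sqrt{J}$. Then for every $t < c$ we have $\mJ(\fra^t) \not\subseteq J$, while $\mJ(\fra^c) \subseteq J$, so $\mJ(\fra^c) \neq \mJ(\fra^{c-\epsilon})$ for all $\epsilon>0$. Hence $c$ is a jumping number. I also need to know that such a $t$ exists at all, i.e. that the set is nonempty. Since $\fra \subseteq \sqrt J$, some power satisfies $\fra^m \subseteq J$. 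Skoda's theorem gives $\mJ(\fra^{t}) \subseteq \fra^{m}$ for $t$ large relative to the number of generators, so the set is nonempty. This should already be implicit in the cited corollary.

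\emph{Jumping numbers are thresholds.} Let $\lambda$ be a jumping number and take $J = \mJ(\fra^{\lambda})$. Then $\mJ(\fra^t) \subseteq J$ holds for $t \geq \lambda$ by monotonicity. It fails for $t<\lambda$, since $\mJ(\fra^t) \supseteq \mJ(\fra^{\lambda-\epsilon}) \supsetneq J$ whenever $t \le \lambda-\epsilon$. So the minimum is exactly $\lambda$.

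The main obstacle is checking that this $J$ is admissible: it must be nonzero and satisfy $\fra \subseteq \sqrt{J}$. Nonzero is clear, because $\mJ(\fra^\lambda) \supseteq \fra^{\lceil \lambda \rceil}\mJ(\fra^0)$-type containments hold; concretely, $\fra^{m}\subseteq \mJ(\fra^{m})\subseteq \mJ(\fra^{\lambda})$ for any integer $m \geq \lambda$. That same containment $\fra^m \subseteq \mJ(\fra^\lambda)$, valid for $R$ regular, gives $\fra \subseteq \sqrt{J}$. One point still needs care: $J$ must be proper, or else the definition of threshold requires it. This follows because a jumping number satisfies $\mJ(\fra^\lambda) \subsetneq \mJ(\fra^{\lambda-\epsilon}) \subseteq R$. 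These two inclusions together give both directions, so the two sets coincide.
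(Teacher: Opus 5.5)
Your route is essentially the paper's. Both directions rest on \autoref{ThresholdsAndTest}: the paper applies parts (a) and (b) directly, and you go through the min-characterization of \autoref{cor.HironakaThresholdViaMultiplierIdeal}, which is just (a) and (b) combined. Your check that $J=\mJ(R,\fra^\lambda)$ is nonzero, proper, and satisfies $\fra\subseteq\sqrt{J}$ is a detail the paper leaves implicit.

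There is one false step. In the direction ``thresholds are jumping numbers'' you claim $J$ is proper \emph{because} $\fra$ is proper and $\fra\subseteq\sqrt{J}$. That does not follow. The unit ideal $J=R$ satisfies $\fra\subseteq\sqrt{R}$, and by \autoref{rem.CaseJ=R} it has $c^R_{\Hir}(\fra)=0$. Since $\mJ(R,\fra^0)=R$ and jumping numbers are positive, $0$ is not a jumping number. So the set equality is false unless $J$ ranges over \emph{proper} ideals. That is how \autoref{cor.JumpingEqualThresholds} is stated in the body; the introductory version drops the word.

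The fix is to take properness of $J$ as a hypothesis rather than try to derive it. Then $c>0$ follows from $\mJ(R,\fra^0)=R\not\subseteq J$, and the rest of your proof goes through unchanged. Skoda's theorem is also unnecessary for nonemptiness: \autoref{lem.UpperBound} gives finiteness of $c_{\Hir}^J(\fra)$, and \autoref{ThresholdsAndTest}(a) then gives $\mJ(R,\fra^{c})\subseteq J$.
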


Perhaps our most interesting result is that $c_{\KH}^{J}(\fra)$ is the limit of the $c^{J_p}_*(\fra_p)$ under reduction modulo $p > 0$ as $p \to \infty$ in the special case that $J$ is a parameter ideal.  This generalizes the famous result that the log canonical threshold is the limit of the $F$-pure thresholds as $p \to \infty$ at least when $R$ is regular local and $J$ is the maximal ideal.  We state this in a special case here, leaving arbitrary finite type over a characteristic zero field case to the text below.  Note that similar results for homogeneous ideals in two-dimensional graded rings can be found in \cite{TrivediWatanabeHilbertKunzDensityFunctions,Trivdei.FThresholdsForProjectiveCurves} without assuming $J$ is a parameter ideal.
\begin{theoremC*}[\autoref{thm.ReductionModuloP}]
    Suppose $R_{\bZ}$ is a finite type $\bZ$-algebra and $\fra_{\bZ}, J_{\bZ} \subseteq R_{\bZ}$ are nonzero ideals.   We let $R, \fra, J$ be their base changes to $\bQ$.  We also assume that $R$ is geometrically integral and that $\fra \subseteq \sqrt{J}$ are proper ideals.  Further let $R_p, \fra_p, J_p$ be their base changes to $\bF_p$.  Suppose that $J$ is a parameter ideal,  then 
    \[
        \lim_{p \to \infty}  c^{J_p}_{*}(\fra_{p}) = c^J_{\KH}(\fra) = c^J_{\Hir}(\fra).
    \]
    In particular, if $R$ is weakly $F$-regular type (for instance if $R$ has rational Gorenstein singularities), then $\lim_{p \to \infty}  c^{J_p}(\fra_{p}) = c^J_{\KH}(\fra) = c^J_{\Hir}(\fra)$.  
\end{theoremC*}

If $J$ is not a parameter ideal, we still obtain the inequality $c_*^{J_p}(\fra_p) \leq c_{\KH}^J(\fra)$ for $p \gg 0$, see \autoref{prop.EasyComparisonReductionModuloP}.  The key to proving the above result is a characterization of $c_{\KH}^J(\fra)$ using Koszul-Hironaka closure.  Namely, in \autoref{prop.ConnectionWithKHForParameterIdeals} we show that $t \geq c^J_{\KH}(\fra)$ if and only if for every finite domain extension $R \subseteq S$ we have that $(\fra S)_{\geq t} \subseteq (JS)^{\KH}$ where $(-)_{\geq t}$ denotes the fractional integral closure.  This characterization is inspired by the earlier work of \cite{RodriguezSchwede.BCMThresholdsNonPrincipal}.  In fact, using Bertini theorems, \autoref{prop.ConnectionWithKHForParameterIdeals} also shows that if $h$ is the product of $m > t$ general elements of $\fra$, then $t \geq c_{\KH}^J(\fra)$ if and only if 
\[
    h^{t/m} \in (JS)^{\KH}
\]
where $S \supseteq R$ is any finite extension such that $h^{t/m} \in S$ makes sense.  This can be checked using a single finite extension and so is amenable to reduction modulo $p \gg 0$.  Then we note that $(JS)^{\KH}$ reduces modulo $p \gg 0$ to $(J_p S_p)^*$ by \cite[Theorem 7.12]{EpsteinMcDonaldRGSchwede}, using that $J$ is a parameter ideal.

\subsection*{AI Acknowledgements}

While the main results and ideas were the work of the human authors, OpenAI's ChatGPT 5.5 Pro and 5.6 Sol Pro helped the authors by suggesting substantial mathematical content in proofs of \autoref{cor.GeneralElementChoiceNew}, \autoref{lem.intersectionBEH} and \autoref{lem.WeirdMapInDerivedCatIsZero}.  Additionally, the authors ran the draft through OpenAI's ChatGPT 5.6 Sol Pro for the purpose of proofreading.  All writing in this document was done by the authors based on their understanding of the mathematics.

\subsection*{Acknowledgements}

Sandra Rodr\'iguez-Villalobos was supported by  NSF Grant \#2101800.  Karl Schwede was supported by NSF Grants \#2501903 and \#2101800  as well as Simons Travel Support for Mathematicians SFI-MPSTSM-00013051. 

\section{Preliminaries}

We begin by reminding the reader of the definition of parameter ideals in a non-local ring.

\begin{definition}
    \label{def.ParameterIdealGeneral}
    Suppose $R$ is Noetherian and locally equidimensional.  We say an ideal $(f_1, \dots, f_n) = J \subseteq R$ with chosen generators $f_1, \dots, f_n$, denoted $\underline{f}$, is a \emph{parameter ideal} if for each prime ideal $Q \supseteq J$ we have that $\underline{f}$ forms part of a system of parameters for $R_Q$.
\end{definition}

\subsection{Fractional integral closure powers}

Suppose $S$ is an integral domain (possibly non-Noetherian) with field of fractions $K = K(S)$, and $\frb \subseteq S$ is a nonzero ideal.  Suppose $t \geq 0$ is a rational number.  

We define the following two fractional integral closure powers of $\frb$:
\[
    \frb_{\geq t} =  \{x\in S \mid x^b\in \overline{\frb^a} \text{ for some } a \in \bZ_{\geq 0} \text{ and } b \in \bZ_{>0} \text{ with } a/b \geq t\} 
\]
and likewise 
\[
    \frb_{>t}=\{x\in S \mid x^b\in \overline{\frb^a} \text{ for some } a \in \bZ_{\geq 0} \text{ and } b \in \bZ_{>0} \text{ with } a/b>t\}.
\]
We record the following fact since we do not know a place in the literature where it is stated in this generality.
\begin{lemma}
    \label{lem.FractionalPowersByValuations}
    With notation as above, $x \in \frb_{\geq t}$ if and only if there exists a pair of integers $a \geq 0, b > 0$ with $a/b \geq t$ such that for every valuation $v$ of $K$ centered on $S$, there exists $y \in \frb$ (depending on $v$) such that $b v(x) \geq a v(y)$.

    Likewise $x \in \frb_{> t}$ if and only if there exists a pair of integers $a \geq 0, b > 0$ with $a/b > t$ such that for every valuation $v$ of $K$ centered on $S$, there exists $y \in \frb$ (depending on $v$) such that $b v(x) \geq a v(y)$.
\end{lemma}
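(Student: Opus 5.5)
The plan is to reduce both statements to the valuative criterion for integral closure of ideals. I would use it in the form valid for an arbitrary integral domain, with no Noetherian hypothesis: for an ideal $I$ of a domain $S$ with fraction field $K$,
\[
\overline{I} \;=\; \bigcap_{V} (IV \cap S),
\]
where $V$ runs over all valuation rings of $K$ containing $S$, i.e. the valuations centered on $S$. This is \cite[Proposition 6.8.2]{SwansonHunekeIntegralClosure} in Swanson--Huneke. If one prefers to avoid the citation, the nontrivial inclusion "$\supseteq$" follows from the standard argument. If $z \notin \overline{I}$, then $zT$ is not integral over the Rees algebra $S[IT] \subseteq K[T]$. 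Hence there is a valuation ring $W$ of $K(T)$ containing $S[IT]$ but not $zT$. Its restriction $V = W \cap K$ contains $S$ and gives $z \notin IV$. The inclusion "$\subseteq$" is immediate, since valuation rings are integrally closed.

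Next, I would describe $IV$ concretely for a valuation ring $V$ with valuation $v$. Since the principal ideals of $V$ are totally ordered, $IV = \bigcup_{y \in I} yV$. Hence $z \in IV$ if and only if $v(z) \geq v(y)$ for some $y \in I$. For powers, $\frb^a V = (\frb V)^a$ is generated by products $y_1 \cdots y_a$ with $y_i \in \frb$. Choosing $y$ among the $y_i$ of minimal value gives $v(y_1\cdots y_a) \geq a\,v(y)$, and conversely $y^a \in \frb^a$. So $z \in \frb^a V$ if and only if there is a $y \in \frb$ with $v(z) \geq a\,v(y)$. When $a = 0$ both sides are trivially true, since $\frb^0 = S$ and we may take $v(y)$ arbitrary with $a\,v(y)=0$.

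Combining the two steps, for fixed $a \geq 0$ and $b > 0$ we get $x^b \in \overline{\frb^a}$ if and only if, for every valuation $v$ centered on $S$, there is $y \in \frb$ with $b\,v(x) = v(x^b) \geq a\,v(y)$. The case $x = 0$ is harmless, as $v(0) = \infty$. Quantifying over pairs $(a,b)$ with $a/b \geq t$, respectively $a/b > t$, gives exactly the two characterizations in \autoref{lem.FractionalPowersByValuations}, directly from the definitions of $\frb_{\geq t}$ and $\frb_{>t}$.

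The only real obstacle is the first step: making sure the valuative criterion holds for non-Noetherian $S$ and for ideals $\frb$ that need not be finitely generated. I would handle this through the Rees-algebra argument above, which uses only that integral closure of a subring in a field is the intersection of the valuation rings containing it. That fact holds in complete generality.
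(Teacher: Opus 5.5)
Your proposal is correct and follows essentially the same route as the paper. Both arguments first apply the valuative criterion, that $x^b\in\overline{\frb^a}$ if and only if $x^b\in\frb^aV$ for every valuation ring $V\supseteq S$ of $K$, and then reduce membership in $\frb^aV$ to a single inequality $b\,v(x)\geq a\,v(y)$ with $y\in\frb$ by choosing, in a product of $a$ elements of $\frb$, a factor of minimal value. The only difference is that you sketch the Rees-algebra proof of the valuative criterion for arbitrary non-Noetherian domains and arbitrary ideals, whereas the paper cites Huneke--Swanson for it.
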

\begin{proof} 
    Note that $x^b \in \overline{\frb^a}$ if and only if for every valuation $v$ of $K$ centered on $S$ with associated valuation domain $V$, we have that $x^b \in \frb^a V$ (see \cite[Theorem 6.8.3]{HunekeSwansonIntegralClosure}).  This means there exists $y_1 \in \frb^a$ such that $b v(x) \geq v(y_1)$.  Writing $y_1 = \sum_{i=1}^r c_i z_i$ with $c_i \in S$ and where $z_i = \prod_{j=1}^a z_{ij}$ with $z_{ij} \in \frb$, we have that $b v(x) \geq v(z_i)$ for some $i$.  But then $b v(x) \geq a v(z_{ij})$ for some $j$.  Set $y = z_{ij}$.
    
    Conversely, if for every $v$ as above, $b v(x) \geq a v(y)$ for some $y \in \frb$, then certainly $x^b \in \frb^a V$ and so $x^b \in \overline{\frb^a}$.  
    Hence we see that $x^b \in \overline{\frb^a}$ if and only if for each valuation $v$ as above, $b v(x) \geq a v(y)$ for some $y \in \frb$.  The lemma follows in the case that $t > 0$, as what we really need is $a/b > 0$.  If $t  = 0$, and $a = 0$, then the first statement follows immediately as $\frb^0 = S$.  In the second case, we always have $a/b > 0$ so nothing changes.
\end{proof}
For more information in the Noetherian setting, see \cite[Section 10.5]{HunekeSwansonIntegralClosure}.  Of particular importance to us will be the fact that if $S$ is Noetherian, then we may restrict the $v$ in \autoref{lem.FractionalPowersByValuations} to the Rees valuations of $\frb$.  See \cite[Proposition 10.5.2(7)]{HunekeSwansonIntegralClosure}.

\subsection{Frobenius thresholds and variants}

We begin by recalling the definition of the Frobenius threshold and two variants.  

\begin{definition}
    \label{def.CharPThresholds}
    Suppose $R$ is an $F$-finite Noetherian domain of characteristic $p>0$ and let $J,\fra$ be nonzero ideals with $\fra\subseteq \sqrt{J}$.

    \begin{description}
        \item[Classical $F$-thresholds \cite{MustataTakagiWatanabeFThresholdsAndBernsteinSato,HunekeMustataTakagiWatanabeFThresholdsTightClosureIntClosureMultBounds}] Assuming $J$ is proper. we define
        $$c^J(\fra)=\lim_{e\to\infty}\frac{\nu_\fra^J(p^e)}{p^e}$$
        where $\nu_\fra^J(p^e)=\max\{n\geq 0\mid \fra^n\not\subseteq J^{[p^e]}\}$.
        \item[{$F$-thresholds up to tight closure \cite[Definition 8.5]{MengMukhopadhyay.hFunctionHKDensityFrobeniusPoincare}}] We define
        $$c_*^J(\fra)=\lim_{e\to\infty}\frac{\nu_\fra^{J*}(p^e)}{p^e}$$
        where $\nu_\fra^{J*}(p^e)=\max\{n\geq 0\mid \fra^n\not\subseteq (J^{[p^e]})^*\}$.
        \item[$R^{+}$-thresholds] We define
        $$c_+^J(\fra)=\inf\{t\in \bQ_{\geq 0} \mid (\fra R^{+})_{>t}\subseteq JR^{+}\}.$$
    \end{description}
    Finally, if $J = R$, then we define all three thresholds to be zero.
\end{definition}

The first limit was shown to exist in full generality in \cite{DeStefaniNunezBetancourtPerezFThresholdsAndRelated}. The second limit was shown to exist in \cite[Lemma 8.6]{MengMukhopadhyay.hFunctionHKDensityFrobeniusPoincare}. 

We begin with an alternate characterization of $c_+^J(\fra)$.  

\begin{lemma}
    \label{lem.AlternateCharacterizationOfPlusThreshold}
    With notation as in \autoref{def.CharPThresholds}, we have that 
        \[
            c_+^J(\fra) = \lim_{e \to \infty} {\nu^{J+}_{\fra}(p^e) \over p^e}
        \]    
        where $\nu_{\fra}^{J+}(p^e) = \max\{ n \geq 0 \;|\; \fra^n \not\subseteq (J^{[p^e]})^+ \}$.
\end{lemma}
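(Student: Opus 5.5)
The approach is to compare both sides directly inside $R^+$. Two features of characteristic $p$ make this work. First, $R^+$ is a perfect domain. Second, the Frobenius closure statement in $R^+$ is clean: for an ideal $I\subseteq R^+$ and $x\in R^+$, we have $x^{p^e}\in I^{[p^e]}$ if and only if $x\in I$. To see this, write $x^{p^e}=\sum r_i j_i^{p^e}$, take $p^e$-th roots $r_i=s_i^{p^e}$ in $R^+$, and use injectivity of Frobenius on a domain to get $x=\sum s_i j_i$. We also use that $(J^{[p^e]})^+=J^{[p^e]}R^+\cap R$.

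\emph{Existence of the limit.} I would first check that $\nu^{J+}_{\fra}(p^{e+1})\geq p\,\nu^{J+}_{\fra}(p^e)$. If $\fra^n\not\subseteq J^{[p^e]}R^+$, pick $f\in\fra^n$ witnessing this. Then $f^p\in\fra^{pn}$, and $f^p\notin J^{[p^{e+1}]}R^+$ by the Frobenius fact above. Hence $\nu^{J+}_{\fra}(p^e)/p^e$ is nondecreasing. It is bounded, since $\fra^N\subseteq J$ for some $N$ gives $\fra^{Np^e}\subseteq J^{[p^e]}$ up to a bounded error. So the limit $L$ exists and equals the supremum.

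\emph{$L\leq c_+^J(\fra)$.} Take a rational $t'>c_+^J(\fra)$, so that $(\fra R^+)_{>t'}\subseteq JR^+$. Suppose $n/p^e>t'$ and $f\in\fra^n$. Then $g=f^{1/p^e}\in R^+$ satisfies $g^{p^e}\in\fra^n R^+$, so $g\in(\fra R^+)_{\geq n/p^e}\subseteq(\fra R^+)_{>t'}\subseteq JR^+$. Raising to the $p^e$-th power gives $f\in J^{[p^e]}R^+\cap R=(J^{[p^e]})^+$. Thus $\nu^{J+}_{\fra}(p^e)\leq t'p^e$ for all $e$, and hence $L\leq t'$.

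\emph{$c_+^J(\fra)\leq L$.} This is the main step. Fix a rational $t>L$ and $x\in(\fra R^+)_{>t}$, say $x^b\in\overline{\fra^aR^+}$ with $a/b>t$. Choose a finite extension domain $R\subseteq S\subseteq R^+$ containing $x$ and the equation witnessing integrality. Then $S$ is excellent and Noetherian, so it is analytically unramified.
\begin{itemize}
\item By the valuative criterion (\autoref{lem.FractionalPowersByValuations}, restricted to Rees valuations of $\fra S$), we get $x^{p^e}\in\overline{\fra^{m_e}S}$ with $m_e=\lfloor p^ea/b\rfloor$. Indeed, for each Rees valuation $v$ we have $v(x^{p^e})\geq (a/b)p^e v(\fra)\geq m_e v(\fra)$.
\item By Rees' theorem (the Briançon--Skoda/Rees bound in an analytically unramified ring) there is a constant $c$, independent of $m$, with $\overline{\fra^mS}\subseteq\fra^{m-c}S$.
\item Hence $x^{p^e}=\sum s_ig_i$ with $s_i\in S$ and $g_i\in\fra^{m_e-c}\subseteq R$.
\end{itemize}
Since $a/b>t>L$, for $e\gg0$ we have $m_e-c>\nu^{J+}_{\fra}(p^e)$. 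So each $g_i\in(J^{[p^e]})^+\subseteq J^{[p^e]}R^+$. Therefore $x^{p^e}\in(JR^+)^{[p^e]}$, and the Frobenius fact gives $x\in JR^+$. This shows $(\fra R^+)_{>t}\subseteq JR^+$, i.e.\ $c_+^J(\fra)\leq t$, and so $c_+^J(\fra)\leq L$.

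The case $J=R$ is trivial, since both sides are $0$. The main obstacle is this last step: passing from an integral-closure condition on non-Noetherian $R^+$ back to honest powers of $\fra$ in $R$. The plan handles it by descending to a Noetherian finite extension $S$ and invoking the uniform Rees constant $c$.
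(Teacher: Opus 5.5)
Your proof is correct, but in the key step it takes a different route from the paper.

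\textbf{The paper's route.} The paper first uses bijectivity of Frobenius on $R^+$ to rewrite $c_+^J(\fra)$ as $\inf_e \min\{a : \overline{\fra^a R^+}\subseteq J^{[p^e]}R^+\}/p^e$. It notes that this sequence is nonincreasing, so the infimum is a limit. It then removes the integral closure using a \emph{uniform} Brian\c{c}on--Skoda containment $\overline{\fra^a R^+}\subseteq \fra^{a-k}R^+$ in the non-Noetherian ring $R^+$ itself, which rests on \cite{BhattDerivedDirectSummand} and \cite{MaMcDonaldRGSchwede.BrianconSkoda}. The resulting sandwich with error $k/p^e$ gives existence of $\lim_e \nu^{J+}_{\fra}(p^e)/p^e$ and the equality at the same time.

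\textbf{Your route.} You get existence of the limit from the inequality $\nu^{J+}_{\fra}(p^{e+1})\geq p\,\nu^{J+}_{\fra}(p^e)$ and prove the two inequalities separately. In the hard direction you descend each individual $x\in(\fra R^+)_{>t}$ to a Noetherian finite extension $S$, where the classical Rees bound $\overline{\fra^m S}\subseteq\fra^{m-c}S$ is available. What makes this legitimate is that $c$ only needs to be independent of $m$, not of $x$, since for a fixed $x$ you only need $e\gg 0$.

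\textbf{Trade-off.} Your argument is more elementary: the only input about $R^+$ is that it is a perfect domain, and the deep vanishing and Brian\c{c}on--Skoda results on $R^+$ are avoided. The paper's argument, in exchange, gives a uniform, explicit rate of convergence $k/p^e$.

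\textbf{Two small points to tidy.}
\begin{enumerate}
\item The bound on $\nu^{J+}_{\fra}(p^e)/p^e$ is a multiplicative estimate, not an additive ``bounded error''. If $J$ has $n$ generators and $\fra^N\subseteq J$, then $\fra^{N(n(p^e-1)+1)}\subseteq J^{n(p^e-1)+1}\subseteq J^{[p^e]}$, so $\nu^{J+}_{\fra}(p^e)\leq Nnp^e$.
\item Since $S$ need not be local, justify the uniform constant $c$ by module-finiteness of the integral closure of the Rees algebra $S[\fra S\,t]$. This holds because $S$ is $F$-finite and hence excellent. ``Analytically unramified'' is a local notion and is not the right justification here.
\end{enumerate}
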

\begin{proof}
    As we are taking an infimum, it is straightforward to see that $c_+^J(\fra) = \inf \{ t = {a \over p^e} \;|\; a \in \bZ_{\geq 0}, (\fra R^+)_{\geq{a \over p^e}} \subseteq JR^+\}$.  As Frobenius is an isomorphism on $R^+$, this is equal to 
    \begin{align*}
        \inf \Big\{ t = {a \over p^e} \;\Big|\; a,e \in \bZ_{\geq 0}, \overline{\fra^{a} R^+} \subseteq {J^{[p^e]}}R^+\Big\}
        =& \inf_e \Bigg\{\frac{\min\{ a \in \bZ_{\geq 0}, \;\Big|\;  \overline{\fra^{a} R^+} \subseteq {J^{[p^e]}}R^+\}}{p^e}\Bigg\}.
    \end{align*}
    Note that, since Frobenius gives a bijection from $(\fra R^+)_{\geq a}$ to $(\fra R^+)_{\geq pa}$ and from $J^{[p^e]} R^+$ to $J^{[p^{e+1}]} R^+$, we have that if $(\fra R^+)_{\geq a} \subseteq J^{[p^e]}R^+$ then $(\fra R^+)_{\geq pa}\subseteq J^{[p^{e+1}]}R^+$. Thus, it follows that for every fixed $e \geq 0$,
    \[
        \begin{array}{rcl}
        \displaystyle{\Big\{ t = {a \over p^e} \;\Big|\; a \in \bZ_{\geq 0}, \overline{\fra^{a} R^+} \subseteq {J^{[p^e]}}R^+\Big\}} &
        \subseteq & 
        \displaystyle{\Big\{ t = {a \over p^{e+1}} \;\Big|\; a \in \bZ_{\geq 0}, \overline{\fra^{a} R^+} \subseteq {J^{[p^{e+1}]}}R^+\Big\}}.
        \end{array}
    \]
    As a consequence, the sequence $\displaystyle{\Big\{\frac{\min\{ a \in \bZ_{\geq 0} \;|\;  \overline{\fra^{a} R^+} \subseteq {J^{[p^e]}}R^+\}}{p^e}\Big\}_e}$ is nonincreasing. Hence, 
    \begin{align*}
        \inf_e \Bigg\{\frac{\min\big\{ a \in \bZ_{\geq 0} \;\Big|\;  \overline{\fra^{a} R^+} \subseteq {J^{[p^e]}}R^+\big\}}{p^e}\Bigg\}
        &= \lim_{e\to\infty}\frac{\min\big\{ a \in \bZ_{\geq 0} \;\Big|\;  \overline{\fra^{a} R^+} \subseteq {J^{[p^e]}}R^+\big\}}{p^e}
    \end{align*}
        
    Note that $\myR\Gamma(Y, \cO_Y) \cong R^+$ for all $Y \to \Spec R^+$ blowups of finitely generated ideals (\cite{BhattDerivedDirectSummand}).  
    Therefore, as $\fra$ is finitely generated, there exists an integer $k > 0$ such that $\overline{\fra^{a} R^+} \subseteq \fra^{a-k} R^+$ for all $a \geq k$ by \cite{MaMcDonaldRGSchwede.BrianconSkoda}.  So, for each $e$, we have
    \[
        \begin{array}{rcl}
        \displaystyle{\frac{\min \Big\{ a \in \bZ_{\geq 0}\;\Big|\; \fra^{a} R^+ \subseteq  {J^{[p^e]}}R^+\Big\}+k}{p^e} }
        &=& \displaystyle\frac{\min\Big\{ a \in \bZ_{\geq k}\;\Big|\; \fra^{a-k} R^+ \subseteq  {J^{[p^e]}}R^+\Big\}}{p^e} \\[15pt]
        &\geq &  \displaystyle\frac{\min\Big\{ a \in \bZ_{\geq 0}\;\Big|\;  \overline{\fra^{a} R^+} \subseteq {J^{[p^e]}}R^+\Big\}}{p^e} \\[15pt]
        &\geq & \displaystyle\frac{\min\Big\{ a \in \bZ_{\geq 0}\;\Big|\;  \fra^{a} R^+\subseteq {J^{[p^e]}}R^+\Big\}}{p^e}.
        \end{array}
    \]
    It follows that 
    \[
        \begin{array}{rcl}
            \displaystyle\lim_{e\to\infty}\displaystyle\frac{\min\Big\{ a \in \bZ_{\geq 0} \;\Big|\;  \overline{\fra^{a} R^+} \subseteq {J^{[p^e]}}R^+\Big\}}{p^e}&= &
            \displaystyle\lim_{e\to\infty}\displaystyle\frac{\min\Big\{ a \in \bZ_{\geq 0} \;\Big|\;  \fra^{a} R^+ \subseteq {J^{[p^e]}}R^+\Big\}}{p^e}\\[15pt]
            &=&\displaystyle\lim_{e\to\infty}\displaystyle\frac{\max\Big\{ a \in \bZ_{\geq 0} \;\Big|\;  \fra^{a} R^+ \not\subseteq {J^{[p^e]}}R^+\Big\}+1}{p^e}\\[15pt]
            &=&\displaystyle\lim_{e\to\infty}\displaystyle\frac{\max\Big\{ a \in \bZ_{\geq 0} \;\Big|\;  \fra^{a} R^+ \not\subseteq {J^{[p^e]}}R^+\Big\}}{p^e}.
        \end{array}
   \]
   Combining this with our earlier observations completes the proof.
   \end{proof}

We recall the following result which is essentially implicit in \cite{RodriguezSchwede.BCMThresholdsNonPrincipal}.

\begin{lemma}
    \label{lem.ComparisonOfCharpThresholds}
    With notation as in \autoref{def.CharPThresholds}, we have $c^J_*(\fra) \leq c^J_+(\fra) \leq c^J(\fra)$.    
    If $R$ is weakly $F$-regular, then all three coincide. 
\end{lemma}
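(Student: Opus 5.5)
Both inequalities follow from comparing, for each $e$, the ideals that appear in the definitions of $\nu^J_\fra(p^e)$, $\nu^{J+}_\fra(p^e)$ and $\nu^{J*}_\fra(p^e)$, and then passing to limits. We use \autoref{lem.AlternateCharacterizationOfPlusThreshold} to write $c_+^J(\fra)$ as a limit of the same shape as the other two. The case $J = R$ is trivial since all three thresholds are $0$ by definition, so assume $J$ is proper.

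\textbf{The inequality $c^J_+(\fra) \leq c^J(\fra)$.} For every ideal $I$ we have $I \subseteq I^+ = IR^+ \cap R$. Apply this with $I = J^{[p^e]}$. If $\fra^n \not\subseteq (J^{[p^e]})^+$, then certainly $\fra^n \not\subseteq J^{[p^e]}$. Hence $\nu^{J+}_\fra(p^e) \leq \nu^J_\fra(p^e)$ for every $e$. Dividing by $p^e$ and letting $e \to \infty$ gives the inequality, using \autoref{lem.AlternateCharacterizationOfPlusThreshold} for the left-hand side and \cite{DeStefaniNunezBetancourtPerezFThresholdsAndRelated} for the existence of the right-hand limit.

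\textbf{The inequality $c^J_*(\fra) \leq c^J_+(\fra)$.} In a Noetherian domain of characteristic $p$ the plus closure is contained in the tight closure: $I^+ \subseteq I^*$. This holds because if $x \in IS$ for a module-finite domain extension $S$, then a nonzero $R$-linear map $S \to R$ (which exists since $S$ is torsion-free and finite over $R$) supplies a test multiplier $c$ with $c x^{q} \in I^{[q]}$ for all $q$. Therefore $(J^{[p^e]})^+ \subseteq (J^{[p^e]})^*$. The same argument as before gives $\nu^{J*}_\fra(p^e) \leq \nu^{J+}_\fra(p^e)$, and taking limits, with the existence of the limit from \cite[Lemma 8.6]{MengMukhopadhyay.hFunctionHKDensityFrobeniusPoincare}, yields $c^J_* \leq c^J_+$.

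\textbf{The weakly $F$-regular case.} If $R$ is weakly $F$-regular, then every ideal is tightly closed, so $(J^{[p^e]})^* = J^{[p^e]}$. Hence $\nu^{J*}_\fra(p^e) = \nu^J_\fra(p^e)$ for all $e$, so $c^J_* = c^J$, and the sandwich $c^J_* \leq c^J_+ \leq c^J$ forces all three to coincide.

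The only point needing care is the containment $I^+ \subseteq I^*$. It is standard, but it needs $R$ to be a domain so that the map $S \to R$ yields a nonzero $c \in R$; that hypothesis is part of \autoref{def.CharPThresholds}. Everything else is monotonicity of limits.
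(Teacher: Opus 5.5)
Your proposal is correct and follows the same route as the paper: the chain $J^{[p^e]} \subseteq (J^{[p^e]})^+ \subseteq (J^{[p^e]})^*$, which collapses to equalities when $R$ is weakly $F$-regular, gives termwise inequalities among the $\nu$'s, and \autoref{lem.AlternateCharacterizationOfPlusThreshold} writes $c^J_+(\fra)$ as a limit of the same form as the other two thresholds. Your justification of $I^+ \subseteq I^*$ via a nonzero $R$-linear map $S \to R$ fills in a standard fact that the paper uses without comment.
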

\begin{proof}
    This follows from \autoref{lem.AlternateCharacterizationOfPlusThreshold} since $J^{[p^e]} \subseteq (J^{[p^e]})^+ \subseteq (J^{[p^e]})^*$ with equality when $R$ is weakly $F$-regular.
\end{proof}

Likewise, we also have the following.

\begin{proposition}
    With notation as in \autoref{def.CharPThresholds}, if $J$ is a parameter ideal, then $c^J_+(\fra) = c^J_*(\fra)$.
\end{proposition}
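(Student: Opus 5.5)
The plan is to compare the two limits term by term, using \autoref{lem.AlternateCharacterizationOfPlusThreshold}. That lemma gives $c_+^J(\fra) = \lim_e \nu_\fra^{J+}(p^e)/p^e$, while $c_*^J(\fra) = \lim_e \nu_\fra^{J*}(p^e)/p^e$ by definition. It therefore suffices to show that
\[
(J^{[p^e]})^+ = (J^{[p^e]})^*
\]
for every $e$, or at least that these agree up to an error that disappears after dividing by $p^e$. If the closures agree, then $\nu_\fra^{J+}(p^e) = \nu_\fra^{J*}(p^e)$ for every $e$, and the two limits coincide.

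The key input is Smith's theorem that tight closure equals plus closure for parameter ideals in an excellent local domain of characteristic $p$. I would apply it after localizing. Since $J = (f_1,\dots,f_n)$ is a parameter ideal in the sense of \autoref{def.ParameterIdealGeneral}, the powers $f_1^{p^e},\dots,f_n^{p^e}$ are part of a system of parameters in $R_Q$ for every prime $Q \supseteq J^{[p^e]}$. So $J^{[p^e]}$ is again a parameter ideal.

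Both closures localize well enough to compare.
\begin{itemize}
\item Tight closure commutes with localization for parameter ideals in excellent rings. More simply, $x \in I^*$ can be checked after localizing at maximal ideals when a test element exists, and an $F$-finite domain has test elements.
\item For plus closure, $x \in IR^+$ can be checked locally on $\Spec R$ using finite extensions. If $x \in IS_{\mathfrak m_i}$ for finitely many maximal ideals covering, we pass to a common finite extension and use that membership in $IS$ is local. Local-at-each-maximal-ideal membership does not immediately give a single finite extension, so here I would instead use the description $IR^+ \cap R$ together with a patching argument over finitely many primes.
\end{itemize}
The inclusion $(J^{[p^e]})^+ \subseteq (J^{[p^e]})^*$ is standard and needs no localization.

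The main obstacle is the reverse inclusion $(J^{[p^e]})^* \subseteq (J^{[p^e]})^+$ in the non-local setting. To avoid the patching subtleties, I would try a second route that uses only the local statement. The thresholds themselves should be computable locally: $c_+^J(\fra)$ and $c_*^J(\fra)$ should each equal the maximum of the corresponding local thresholds at the finitely many maximal ideals (or associated primes of $J$) where the relevant containment fails. Containment $\fra^n \subseteq I$ for a closure operation that is checked locally reduces to the local statements. For tight closure with test elements this is fine.

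For $+$, I would use the identity $(\fra R^+)_{>t} \subseteq JR^+$ from the definition of $c_+$. Containment of $R^+$-modules can be tested after localizing at primes of $R^+$, and the localizations of $R^+$ are absolute integral closures of the local rings $R_Q$. Then Smith's theorem applied in each $R_Q$ gives local equality, and \autoref{lem.AlternateCharacterizationOfPlusThreshold} applied to $R_Q$ gives the local equality of thresholds. Combining this with \autoref{lem.ComparisonOfCharpThresholds}, which gives $c_* \le c_+$ globally, yields the claim.
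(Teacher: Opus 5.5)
Your first paragraph is exactly the paper's proof. By \autoref{lem.AlternateCharacterizationOfPlusThreshold} it suffices to show $(J^{[p^e]})^+ = (J^{[p^e]})^*$ for every $e$, and this holds by Smith's theorem because $J^{[p^e]}$ is again a parameter ideal and $R$ is excellent, being $F$-finite. Your worry about the non-local case and the second route are unnecessary: the paper itself uses Smith's Theorem 5.1 as a statement about locally excellent domains. Even reducing to the local case by hand is routine. For tight closure you only need that $x \in I^*$ implies $x/1 \in (IR_{\mathfrak m})^*$, which is trivial; this is the opposite direction from the test-element local-to-global statement you quote. Membership in $IR^+$ can be checked at each maximal ideal $\mathfrak m$ of $R$, since it is the vanishing of an element of the $R$-module $R^+/IR^+$ and localizing $R^+$ at $R\setminus\mathfrak m$ gives $(R_{\mathfrak m})^+$, so no patching of finite extensions is needed.
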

\begin{proof}
    This follows from \autoref{lem.AlternateCharacterizationOfPlusThreshold} since $J^+ = J^*$ for parameter ideals by \cite{SmithTightClosureParameter}.
\end{proof}

\subsection{Multiplier modules and ideals and related vanishing theorems}
\label{subsec.MultiplierModulesAndVanishing}

    First we fix some notation.  If $Z \to \Spec R$ is a proper map and $\sF$ is a sheaf on $Z$, we write $\Gamma(\sF)$ instead of $\Gamma(Z, \sF)$ to help keep the notation more compact and since no confusion is likely.  Likewise, if $\sF^{\mydot} \in D(Z)$, then we write $\myR\Gamma(\sF^{\mydot})$ instead of $\myR\Gamma(Z, \sF^{\mydot})$.

    We now recall the notions of log resolution as well as an alteration variant.

    \begin{notation}
        Suppose that $X$ is an excellent Noetherian integral scheme of equal characteristic zero and $\fra \subseteq \cO_X$ is a nonzero ideal sheaf.  We say that a proper surjective map $\pi : Y \to X$ is a \emph{log alteration of $(X, \fra)$} if 
        \begin{enumerate}
            \item $\pi$ is generically finite,
            \item $Y$ is a nonsingular integral\footnote{warning, this assumption is not standard} scheme, and
            \item $\fra \cO_Y = \cO_Y(-D)$ for some divisor $D$ such that $D$ has simple normal-crossing support
        \end{enumerate}
        We say that $\pi$ is a \emph{log resolution} if additionally $\pi$ is birational and the exceptional set of $\pi$ is a divisor that has simple normal-crossing support with $D$.      We recall that log resolutions exist in this generality by \cite{TemkinDesingularizationOfQuasiExcellentCharZero}, also see \cite{HironakaResolution}.  

        Suppose $X = \Spec R$ and abuse notation to let $\fra$ also denote the corresponding ideal of $R$.  A log alteration or log resolution of $(R, \fra)$ is then a log resolution or log alteration of the corresponding pair $(\Spec R, \fra)$.  
    \end{notation}

    Now we review the definitions of multiplier ideals and multiplier modules.

    \begin{definition}
        \label{def.MultiplierModuleAndIdeal}
        Suppose $R$ is an excellent domain of equal characteristic zero with a dualizing complex and $\fra \subseteq R$ is a nonzero ideal.  Suppose that $\pi : Z \to \Spec R$ is a log resolution of singularities of the pair $(R, \fra)$ writing $\fra \cO_Z  = \cO_Z(-D)$.  For any $t \geq 0$, the \emph{multiplier module\footnote{also sometimes called the Grauert-Riemenschneider sheaf} of the pair $(R, \fra^t)$}, denoted $\mJ(\omega_R, \fra^t)$, is defined to be 
        \[
            \Gamma(\omega_Z \otimes \cO_Z(-\lfloor t D \rfloor)) = \Gamma(\cO_Z(\lceil K_Z - t D \rceil)) \subseteq \omega_R.
        \]
        Here the inclusion is obtained by applying Grothendieck duality to the composition $R \to \myR\Gamma(\cO_Z) \to \myR\Gamma(\cO_Z(\lfloor t D \rfloor))$ and taking the lowest nonzero cohomology.  The map is nonzero because $\pi$ is birational.  The map is an inclusion since the modules are torsion-free and rank one.  

        If additionally $R$ is $\bQ$-Gorenstein\footnote{meaning that $R$ is G1 (Gorenstein in codimension 1) and S2 and that $\omega_R^{(n)} =: R(nK_R)$ is locally free for some integer $n > 0$}, the \emph{multiplier ideal of the pair $(R, \fra^t)$} is 
        \[
            \Gamma(\cO_Z(\lceil K_Z - \pi^* K_R - t D \rceil)) \subseteq R
        \]
        where the inclusion is induced as follows. If we pick $R \subseteq \omega_R$ so that $K_R \geq 0$, then the image of $\Gamma(\cO_Z(\lceil K_Z - \pi^* K_R - t D \rceil)) \to \omega_R$ lands in $R$ since $R$ is S2 and a direct computation shows that the image lands in $R$ in codimension 1. 

        Clearly if $\omega_R \cong R$ (such as happens sufficiently locally if $R$ is quasi-Gorenstein), then we may identify the multiplier module with the multiplier ideal.  
        
        If $\fra = R$ or $t = 0$ then we sometimes omit $\fra^t$ from the notation and simply write $\mJ(\omega_R)$ and $\mJ(R)$.  If $\fra = (f)$ is principal, then we also write $\mJ(\omega_R, f^t)$ and $\mJ(R, f^t)$ for the corresponding multiplier module or ideal.
    \end{definition}

    The multiplier module and multiplier ideal are independent of the choice of log resolution.  
    More subtly, in the context above, we recall that 
    \begin{equation}
        \label{eq.RelativeKVVanishing}
        H^i(Z,\omega_Z \otimes \cO_Z(-\lfloor t D \rfloor)) = 0\;\;\;\; \text{ and } \;\;\;\;  H^i(Z, \cO_Z(\lceil K_Z - \pi^* K_R - t D \rceil)) = 0 
    \end{equation}
    for $i > 0$ by the generalization of relative Kawamata-Viehweg vanishing found in 
    \cite{Murayama.RelativeVanishingForQSchemes}.  
    
We assemble the above into the following.

\begin{lemma}
    \label{lem.DualOfMultiplierIdeal}
    Using the notation of \autoref{def.MultiplierModuleAndIdeal}, assume additionally that $R$ is Gorenstein, fix $\omega_R \cong R$ and so set $K_R = 0$.   Then the map  
    \[
        \mJ(R, \fra^t) \cong \myR \Hom_{R} (\myR\Gamma(\cO_Z(\lfloor tD \rfloor)), R) \to \myR \Hom_R(R, R) \cong R
    \]
    is the inclusion of the multiplier ideal into $R$.
    
    Hence also, the map 
    \[
        \Hom_{D(R)} (\myR\Gamma(\cO_Z(\lfloor tD \rfloor)), R) \to \Hom_{D(R)}(R, R) \cong R
    \]
    is injective and the image is the multiplier ideal $\mJ(R, \fra^t)$.  Explicitly, $\phi : \myR\Gamma(\cO_Z(\lfloor tD \rfloor)) \to R$ is sent to the element $r \in \mJ(R, \fra^t) \subseteq R$ such that $R \to \myR\Gamma(\cO_Z(\lfloor tD \rfloor)) \xrightarrow{\phi} R$ is multiplication by $r$.
\end{lemma}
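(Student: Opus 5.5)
The plan is to apply Grothendieck duality to $\pi$. Then we use the vanishing \eqref{eq.RelativeKVVanishing} to collapse the dual complex to a single module, and finally identify the resulting map with the inclusion in \autoref{def.MultiplierModuleAndIdeal}.

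First, since $R$ is Gorenstein with $\omega_R \cong R$, we may take $\omega_R^{\mydot} = R[d]$ locally, and a normalized dualizing complex on $Z$ is $\omega_Z[d]$. Grothendieck duality for the proper map $\pi$ then gives
\[
\myR\Hom_R(\myR\Gamma(\cO_Z(\lfloor tD\rfloor)), R) \cong \myR\Gamma(\myR\sHom_{\cO_Z}(\cO_Z(\lfloor tD\rfloor), \omega_Z)) = \myR\Gamma(\omega_Z\otimes \cO_Z(-\lfloor tD\rfloor)).
\]
Working locally on $\Spec R$ if the dimension varies avoids any issue with the shift. By \eqref{eq.RelativeKVVanishing} the higher cohomology vanishes, so this complex is the module $\Gamma(\cO_Z(\lceil K_Z - tD\rceil))$. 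Since $K_R = 0$, this module is exactly $\mJ(R,\fra^t)$. Dualizing the composition $R \to \myR\Gamma(\cO_Z) \to \myR\Gamma(\cO_Z(\lfloor tD\rfloor))$ gives a map $\mJ(R,\fra^t) \to \myR\Hom_R(R,R) = R$.

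Second, I identify this map with the inclusion. The definition in \autoref{def.MultiplierModuleAndIdeal} builds the inclusion into $\omega_R = R$ precisely as this duality map, taking $H^0$, or the lowest cohomology after shifting. So in the Gorenstein case it is literally the same map, provided the isomorphism $\omega_R\cong R$ is chosen consistently with $K_R = 0$. One also checks that the map is nonzero, hence injective, because the modules are torsion-free of rank one and $\pi$ is birational. This compatibility is the main point requiring care. I expect it to be routine but notationally fussy, since the trace $\pi_*\omega_Z \to \omega_R$ must restrict to the identity on the open set where $\pi$ is an isomorphism.

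Third, for the $\Hom_{D(R)}$ statement, I take $H^0$ of the $\myR\Hom$ map. Since $\myR\Hom_R(\myR\Gamma(\cO_Z(\lfloor tD\rfloor)),R)$ is concentrated in degree $0$, its $H^0$ is $\Hom_{D(R)}(\myR\Gamma(\cO_Z(\lfloor tD\rfloor)),R)$, and $H^0$ of the target is $\Hom_{D(R)}(R,R) = R$. The induced map is therefore the inclusion $\mJ(R,\fra^t)\subseteq R$. In particular it is injective with image $\mJ(R,\fra^t)$.

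The explicit description is then just functoriality of $\Hom_{D(R)}(-,R)$. A morphism $\phi$ is sent to $\phi$ precomposed with $R \to \myR\Gamma(\cO_Z(\lfloor tD\rfloor))$. That composite is an element of $\Hom_{D(R)}(R,R) = R$, which is multiplication by some $r$, and $r$ lies in $\mJ(R,\fra^t)$ by the previous step.
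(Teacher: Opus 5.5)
Your proposal is correct and follows the paper's route. You apply Grothendieck duality, then use the vanishing in \autoref{eq.RelativeKVVanishing} to identify the dual of $R \to \myR\Gamma(\cO_Z(\lfloor tD\rfloor))$ with the inclusion built into \autoref{def.MultiplierModuleAndIdeal}, and you get the $\Hom_{D(R)}$ statement from $\myH^0\myR\Hom_R(-,-) \cong \Hom_{D(R)}(-,-)$ together with naturality (precomposition); the paper compresses all of this into ``follows from the definitions'' plus a Stacks Project citation.
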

\begin{proof}
    As $K_R = 0$, the first statement follows quickly from the definitions  thanks to \autoref{eq.RelativeKVVanishing}.
    For the second statement, we recall that $\myH^0 \myR\Hom_R(A^{\mydot}, B^{\mydot})$ is naturally isomorphic to $\Hom_{D(R)}(A^{\mydot}, B^{\mydot})$, see \cite[\href{https://stacks.math.columbia.edu/tag/0A64}{Tag 0A64}]{stacks-project}.  Unraveling the definition proves the final statement.
\end{proof}

\begin{remark}
    \label{rem.AlterationDescriptionOfMultiplier}
    The argument of \cite[Theorem 8.1]{BlickleSchwedeTuckerTestAlterations} also implies that for any log alteration $Y \to \Spec R$ of $(R, \fra)$ with $\fra \cO_Y = \cO_Y(-D)$, we have that 
    \begin{equation}
        \label{eq.MultiplierModuleDescription}
        \mJ(\omega_R, \fra^t) = \Image\Big( \Gamma(Y, \omega_Y \otimes \cO_Y(-\lfloor t D \rfloor)) \xrightarrow{\Tr} \omega_R \Big)
    \end{equation}
    where $\Tr : K(Y) \to K(\Spec R)$ is the field trace.  The analogous result also holds for the multiplier ideal.
\end{remark}

\subsection{Koszul-Hironaka and Hironaka closure operations}

\begin{definition}
Suppose $R$ is a Noetherian excellent domain of equal characteristic zero.  We additionally suppose $R$ has a dualizing complex.  Fix $Z \to \Spec R$ a resolution of singularities.  For any ideal $I = (f_1, \dots, f_n) \subseteq R$ we define the \emph{Koszul-Hironaka (KH) closure} of $I$ to be 
\[
    I^{\KH} := \Ker\big(R \to H_0(\Kos_{\bullet}(\underline{f}) \otimes^{\myL} \myR\Gamma(\cO_Z)) \big)
\]  
This is independent of the choice of generators $\underline{f}$ and of the resolution $Z$ (\cite[Proposition 3.3]{EpsteinMcDonaldRGSchwede}).  

Similarly, we define the \emph{Hironaka pre-closure} of $I$ to be 
\[
    I^{\Hir} := \Ker\big(R \to H_0(R/I \otimes^{\myL} \myR\Gamma(Z, \cO_Z)) \big).
\]  
\end{definition}

We record some useful properties of these operations for use later.

\begin{lemma}
    \label{lem.KHHirBasicProperties}
    We fix $R$ to be an excellent domain of equal characteristic zero with a dualizing complex and suppose that $I, J \subseteq R$ are ideals.
    \begin{enumerate}
        \item $I^{\KH}$ is independent of the choice of generators $f_1, \dots, f_n$.  (\cite[Proposition 3.3]{EpsteinMcDonaldRGSchwede})    \label{lem.KHHirBasicProperties.IndependenceOfChoices}
        \item Koszul-Hironaka closure is idempotent\footnote{technically this is part of being a closure operation, but we emphasize it as it will be used later}, that is $(I^{\KH})^{\KH} = I^{\KH}$. (\cite[Proposition 3.3]{EpsteinMcDonaldRGSchwede})  \label{lem.KHHirBasicProperties.Idempotence}
        \item $I \subseteq I^{\KH} \subseteq I^{\Hir} \subseteq \overline{I}$ (\cite[Proposition 3.3, Equation (6.1.1) and Remark 6.6]{EpsteinMcDonaldRGSchwede})    \label{lem.KHHirBasicProperties.Containments}
        \item If $I \subseteq J$, then $I^{\KH} \subseteq J^{\KH}$ and $I^{\Hir} \subseteq J^{\Hir}$.  (\cite[Proposition 3.3]{EpsteinMcDonaldRGSchwede}, the argument for ${-}^{\Hir}$ is the same)    \label{lem.KHHirBasicProperties.OrderPreserving}
        \item The formation of $I^{\KH}$ and $I^{\Hir}$ commutes with localization.  (\cite[Proposition 3.5]{EpsteinMcDonaldRGSchwede}, the argument for ${-}^{\Hir}$ is the same)    \label{lem.KHHirBasicProperties.Localization}
        \item If $R \subseteq S$ is a finite extension of domains then $(IS)^{\KH} \cap R = I^{\KH}$.  (\cite[Proposition 3.7]{EpsteinMcDonaldRGSchwede})     \label{lem.KHHirBasicProperties.FiniteExtensions}
    \end{enumerate}
\end{lemma}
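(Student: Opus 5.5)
Every item in \autoref{lem.KHHirBasicProperties} carries a citation to \cite{EpsteinMcDonaldRGSchwede}. The plan is therefore mostly to recall those arguments, and to supply the small extra argument where the cited result treats only $(-)^{\KH}$ and we also claim the statement for $(-)^{\Hir}$. Items (\ref{lem.KHHirBasicProperties.IndependenceOfChoices}), (\ref{lem.KHHirBasicProperties.Idempotence}) and (\ref{lem.KHHirBasicProperties.FiniteExtensions}) are quoted verbatim from \cite[Propositions 3.3 and 3.7]{EpsteinMcDonaldRGSchwede}. The containments in (\ref{lem.KHHirBasicProperties.Containments}) are quoted from \cite[Proposition 3.3, Equation (6.1.1), Remark 6.6]{EpsteinMcDonaldRGSchwede}, but the middle one is easy to see directly. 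The augmentation $\Kos_{\bullet}(\underline{f}) \to R/I$ is an isomorphism on $H_0$. Tensoring with $\myR\Gamma(\cO_Z)$ gives a map
\[
    \Kos_{\bullet}(\underline{f}) \otimes^{\myL} \myR\Gamma(\cO_Z) \to R/I \otimes^{\myL} \myR\Gamma(\cO_Z)
\]
compatible with the maps from $R$. Hence anything killed in $H_0$ of the source is killed in $H_0$ of the target, which gives $I^{\KH} \subseteq I^{\Hir}$. The inclusion $I \subseteq I^{\KH}$ holds because the map $R \to H_0$ factors through $H_0(\Kos_\bullet(\underline f)) \otimes R = R/I$.

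For the order-preserving property (\ref{lem.KHHirBasicProperties.OrderPreserving}) in the $\Hir$ case, I would use the surjection $R/I \to R/J$. It induces a map $R/I \otimes^{\myL} \myR\Gamma(\cO_Z) \to R/J \otimes^{\myL} \myR\Gamma(\cO_Z)$ commuting with the maps from $R$, so $\ker$ on $H_0$ can only grow. For the $\KH$ version, choose generators of $J$ extending those of $I$; this is allowed by (\ref{lem.KHHirBasicProperties.IndependenceOfChoices}). Then there is a map of Koszul complexes $\Kos_\bullet(\underline f) \to \Kos_\bullet(\underline f, \underline g)$ over $R$, and the same argument applies.

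For localization (\ref{lem.KHHirBasicProperties.Localization}), let $W$ be a multiplicative set. The base change $Z_W = Z \times_R W^{-1}R$ is a resolution of $W^{-1}R$, and flat base change gives $\myR\Gamma(\cO_Z) \otimes_R W^{-1}R \cong \myR\Gamma(\cO_{Z_W})$. Localization is exact, so it commutes with derived tensor products, with $H_0$, and with kernels. This gives $(W^{-1}I)^{\Hir} = W^{-1}(I^{\Hir})$, and the same for $\KH$. Here I would note that the definitions use a fixed resolution; independence of the resolution for $\Hir$ follows as in \cite[Proposition 3.3]{EpsteinMcDonaldRGSchwede}, since any two resolutions are dominated by a third $Z'$ with $\myR\Gamma(\cO_{Z'}) \cong \myR\Gamma(\cO_Z)$ in characteristic zero.

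The only step I expect to need care is the outermost containment $I^{\Hir} \subseteq \overline{I}$. My plan is to pass to a log resolution $Z$ on which $I\cO_Z$ is invertible, say $I\cO_Z = \cO_Z(-E)$, which is permitted by independence of $Z$. I would then note that $H_0(R/I \otimes^{\myL} \myR\Gamma(\cO_Z))$ maps to $H^0(Z, \cO_Z/I\cO_Z)$, compatibly with the maps from $R$; this is the comparison of derived with underived tensor on $Z$. So an element of $I^{\Hir}$ lies in $\Gamma(\cO_Z(-E)) \cap R$, which is $\overline{I}$ because $Z$ dominates the normalized blowup. This matches \cite[Remark 6.6]{EpsteinMcDonaldRGSchwede}, which we cite.
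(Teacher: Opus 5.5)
Your proposal is correct and matches the paper, which gives no argument beyond the citations to \cite{EpsteinMcDonaldRGSchwede}, together with the remark that the $(-)^{\Hir}$ cases go the same way as the $(-)^{\KH}$ ones. Your direct verifications of the $\Hir$ statements are exactly the routine checks the paper leaves implicit: the surjection $R/I \to R/J$ for order preservation, flat base change for localization, and the natural map $H_0(R/I \otimes^{\myL} \myR\Gamma(\cO_Z)) \to H^0(Z, \cO_Z/I\cO_Z)$ for $I^{\Hir} \subseteq \overline{I}$.
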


\section{Splitting in the derived category}

    When we study characteristic zero, we will need to understand splitting conditions in the derived category.  We state some facts that we believe are well known to experts.

    \begin{lemma}
        \label{lem.SplittingForAlterationsNormalCrossingSupport}
        Suppose $Z$ is a non-singular Noetherian excellent integral scheme with a dualizing complex over a field of characteristic zero.  Suppose further that $0 \leq \Delta$ is a $\bQ$-divisor with $\lfloor \Delta \rfloor = 0$ and such that $\Delta$ has simple normal-crossings support.  Assume that $\pi : Y \to Z$ is a normal integral alteration.  Then 
        \[
            \cO_Z \to \myR \pi_* \cO_Y(\lfloor \pi^* \Delta\rfloor)
        \]
        splits in the derived category.  
    \end{lemma}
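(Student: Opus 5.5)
We will produce the splitting by Grothendieck duality, reducing it to a statement about dualizing sheaves and the trace map. Since $\lfloor \Delta \rfloor = 0$, the map $\cO_Z \to \myR\pi_*\cO_Y(\lfloor \pi^*\Delta \rfloor)$ factors as
\[
\cO_Z \to \myR\pi_*\cO_Y \to \myR\pi_*\cO_Y(\lfloor \pi^*\Delta\rfloor).
\]
It therefore suffices to construct a map $\myR\pi_*\cO_Y(\lfloor \pi^*\Delta\rfloor) \to \cO_Z$ whose composition with the above is an isomorphism of $\cO_Z$; after rescaling we only need it to be multiplication by a unit. Throughout we normalize dualizing complexes so that $\omega_Z^{\mydot} = \omega_Z[\dim]$ locally, with $\omega_Z$ invertible.

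\textbf{Step 1 (duality).} By Grothendieck duality for the proper map $\pi$,
\[
\myR\sHom_Z(\myR\pi_*\cO_Y(\lfloor\pi^*\Delta\rfloor), \omega_Z^{\mydot}) \cong \myR\pi_*\myR\sHom_Y(\cO_Y(\lfloor\pi^*\Delta\rfloor), \omega_Y^{\mydot}).
\]
A map $\myR\pi_*\cO_Y(\lfloor\pi^*\Delta\rfloor)\to \cO_Z$ corresponds, after twisting by $\omega_Z^{-1}$ and shifting, to a map $\myR\pi_*\cO_Y(\lfloor\pi^*\Delta\rfloor) \to \omega_Z^{\mydot}\otimes\omega_Z^{-1}[-\dim]$. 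The lowest cohomology of the right-hand side of the duality isomorphism is $\pi_*(\omega_Y \otimes \cO_Y(-\lfloor\pi^*\Delta\rfloor))$, where $\omega_Y = \myH^{-\dim}\omega_Y^{\mydot}$ is the reflexive canonical sheaf of the normal variety $Y$. Any global section of $\sHom(\pi_*\omega_Y(-\lfloor \pi^*\Delta\rfloor)\otimes\omega_Z^{-1},\cO_Z)$ that factors through the truncation yields the desired map. So the natural candidate is the trace
\[
\Tr: \pi_*\omega_Y(-\lfloor\pi^*\Delta\rfloor) \subseteq \pi_*\omega_Y \to \omega_Z,
\]
which dualizes to $\cO_Z \to \myR\pi_*\cO_Y \to \myR\pi_*\cO_Y(\lfloor\pi^*\Delta\rfloor)$. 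Concretely, the map we want is the composite
\[
\myR\pi_*\cO_Y(\lfloor\pi^*\Delta\rfloor) \to \myR\sHom(\myR\pi_*\omega^{\mydot}_Y(-\lfloor\pi^*\Delta\rfloor),\omega_Z^{\mydot}) \to \myR\sHom(\omega_Z[\dim],\omega_Z^{\mydot}) = \cO_Z,
\]
where the second arrow is induced by the map $\omega_Z[\dim] \to \myR\pi_*\omega_Y^{\mydot}(-\lfloor\pi^*\Delta\rfloor)$.

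\textbf{Step 2 (the key point).} The map $\omega_Z[\dim] \to \myR\pi_*\omega_Y^{\mydot}(-\lfloor\pi^*\Delta\rfloor)$ should be built by showing that the trace $\pi_*\omega_Y(-\lfloor\pi^*\Delta\rfloor) \to \omega_Z$ is \emph{surjective}; choosing a local preimage of a generator then gives $\omega_Z \to \pi_*\omega_Y(-\lfloor\pi^*\Delta\rfloor)$, which we follow with the inclusion of lowest cohomology. The composition with the trace is then the identity. Surjectivity is the statement that $\mJ(\omega_Z,\Delta)$ computed on the alteration equals $\omega_Z$, and this follows from \autoref{rem.AlterationDescriptionOfMultiplier}, i.e.\ the argument of \cite[Theorem 8.1]{BlickleSchwedeTuckerTestAlterations}. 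Indeed, $Z$ is regular, and $(Z,\Delta)$ with snc support and $\lfloor\Delta\rfloor=0$ is klt, so its multiplier module is $\omega_Z$. To apply the remark, one dominates $Y$ by a log resolution $Y'$ of $(Y,\pi^*\Delta)$. Then $\lfloor\pi'^*\Delta\rfloor$ pulls back compatibly and $\omega_{Y'}(-\lfloor\cdot\rfloor)$ traces into $\omega_Y(-\lfloor\pi^*\Delta\rfloor)$, so surjectivity from $Y'$ implies surjectivity from $Y$.

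\textbf{Step 3 (globalizing).} Because $\omega_Z$ is locally free, surjectivity of a map onto it splits only locally a priori. However, $\Hom(\omega_Z,-)$ of a surjection of quasi-coherent sheaves onto a locally free sheaf need not be surjective on global sections. To handle this, we work instead with the morphism of complexes: surjectivity of the lowest cohomology on a regular scheme, combined with the duality identification, says that the composite $\cO_Z \to \myR\pi_*\cO_Y(\lfloor\pi^*\Delta\rfloor) \to \cO_Z$ can be arranged as follows. We use the dual map of the \emph{trace itself}, namely $\Tr^\vee$, through $\tau$-truncations, which exists globally. The composite $\cO_Z \to \cO_Z$ is then multiplication by the global section $\Tr(\text{generic})$, i.e.\ it is determined by the image of $\Tr$, which equals all of $\omega_Z$. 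Hence the composite is an isomorphism, giving the splitting.

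We expect Steps 2 and 3 to be the main obstacle: making the duality bookkeeping of truncations precise, and establishing trace surjectivity for a normal, not necessarily smooth, $Y$. If Step 3 misbehaves globally, the fallback is to prove the splitting locally and patch using the uniqueness given by $\Hom_{D(Z)}(\cO_Z,\cO_Z)=\Gamma(\cO_Z)$.
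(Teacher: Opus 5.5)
\textbf{Gap in Step 3: the section of the trace is only local.} Your Steps 1 and 2 follow the paper's strategy: dualize, reduce to splitting the trace, and use that $(Z,\Delta)$ is klt so $\mJ(\cO_Z,\Delta)=\cO_Z$. Step 3 does not close the argument. Surjectivity of the sheaf trace $\pi_*\omega_Y(-\lfloor \pi^*\Delta\rfloor)\to\omega_Z$ only gives \emph{local} preimages of a generator, and nothing you write turns these into a global map $\omega_Z\to\pi_*\omega_Y(-\lfloor\pi^*\Delta\rfloor)$.

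The ``dual of the trace'' is just the original map $\cO_Z\to\myR\pi_*\cO_Y(\lfloor\pi^*\Delta\rfloor)$ again, so it points the wrong way. ``Multiplication by $\Tr(\text{generic})$, determined by the image of $\Tr$'' is not a construction: knowing the image is all of $\omega_Z$ says nothing about a global section. The fallback also fails. Morphisms in $D(Z)$ do not glue, and $\Hom_{D(Z)}(\cO_Z,\cO_Z)=\Gamma(\cO_Z)$ constrains only the composite, not the local splittings. Even for sheaves, a surjection onto a line bundle need not split globally; compare $\cO(-1)^{\oplus 2}\to\cO$ on $\mathbb{P}^1$. (Also, in Step 1 the section you want lives in $\sHom(\omega_Z,\pi_*\omega_Y(-\lfloor\pi^*\Delta\rfloor))$, not in its dual.)

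\textbf{The missing idea: the section is canonical and global.} Use the other half of the multiplier-ideal description that the paper uses, namely $\mJ(\cO_Z,\Delta)=\cO_Z\cap\pi_*\cO_Y(K_Y-\pi^*K_Z-\pi^*\Delta)$ from \cite[Theorem 9.5.42]{LazarsfeldPositivity2} and \cite[Theorem 8.1]{BlickleSchwedeTuckerTestAlterations}. Alternatively, use the klt inequality directly: every coefficient of $K_Y-\pi^*(K_Z+\Delta)$ is $>-1$, so $K_Y-\pi^*K_Z-\lfloor\pi^*\Delta\rfloor\geq 0$.

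Either way, $1$ is a global section of $\pi_*\cO_Y(K_Y-\pi^*K_Z-\lfloor\pi^*\Delta\rfloor)$. Equivalently, the map $s:\omega_Z\to\pi_*\omega_Y(-\lfloor\pi^*\Delta\rfloor)$ corresponding to $1$ exists globally, and $\Tr\circ s$ is multiplication by $\deg\pi$, a unit in characteristic zero. This is exactly the paper's argument. The paper first passes to a regular $Y$ with $\pi^*\Delta$ integral, and uses relative Kawamata--Viehweg vanishing to work at the sheaf level.

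\textbf{With $s$ in hand, your construction goes through.} Write $\omega_Z^{\mydot}=\omega_Z[d]$. The composite $\omega_Z[d]\xrightarrow{s[d]}\pi_*\omega_Y(-\lfloor\pi^*\Delta\rfloor)[d]\to\myR\pi_*\myR\sHom(\cO_Y(\lfloor\pi^*\Delta\rfloor),\omega_Y^{\mydot})\xrightarrow{\Tr}\omega_Z[d]$ only sees $\myH^{-d}$, so it equals $(\Tr\circ s)[d]$, which is an isomorphism. Once repaired, your route has the small advantage of not needing the vanishing theorem at all, and the surjectivity argument of your Step 2 becomes unnecessary.
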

    \begin{proof}
        We begin with a claim.
        \begin{claim}
            For any alteration $\mu : Y' \to Y$ with $Y'$ normal integral, we have an induced map $\cO_Y(\lfloor \pi^* \Delta\rfloor) \to \myR \mu_* \cO_{Y'}(\lfloor \mu^* \pi^* \Delta \rfloor)$.
        \end{claim}
        \begin{proof}[Proof of claim]
            It suffices to show $\cO_Y(\lfloor \pi^* \Delta\rfloor) \subseteq \mu_* \cO_{Y'}(\lfloor \mu^* \pi^* \Delta \rfloor)$.  Working on some open set, we see that $\Div_Y(f) + \lfloor \pi^* \Delta\rfloor \geq 0$ if and only if $\Div_Y(f) + \pi^* \Delta \geq 0$, but that is equivalent to $\Div_{Y'}(f) + \mu^* \pi^* \Delta \geq 0$ which implies $\Div_{Y'}(f) + \lfloor \mu^* \pi^* \Delta\rfloor \geq 0$.
        \end{proof}
        Hence, because of the claim, we may replace $Y$ with a larger alteration $Y'$ since splitting from $Y'$ will induce the splitting from $Y$.  Hence we may assume that $Y$ is regular.  Likewise, we may assume that $\pi^* \Delta$ is a Cartier divisor with integer coefficients.

        Applying $\myR \sHom_{Z}(-, \cO_Z)$ (and noting that $K_Z$ is a line bundle) we obtain a map:
        \[
            \cO_Z \xleftarrow{\Tr} \myR \pi_* \cO_Y(K_Y - \pi^* K_Z - \pi^* \Delta).
        \]
        It suffices to show that this map is split surjective as applying $\myR \sHom_{Z}(-, \cO_Z)$ again recovers the original map.  There are no higher direct images by relative Kawamata-Viehweg vanishing, proved in this generality in \cite{Murayama.RelativeVanishingForQSchemes}.  Hence it suffices to show that 
        \[
            \cO_Z \xleftarrow{\Tr} \pi_* \cO_Y(K_Y - \pi^* K_Z - \pi^* \Delta).
        \]
        is split surjective.
        Now, by \cite[Theorem 9.5.42]{LazarsfeldPositivity2} and \cite[Theorem 8.1]{BlickleSchwedeTuckerTestAlterations}, we know that 
        \[
            \mJ(\cO_Z, \Delta) = \cO_Z \cap \pi_* \cO_Y(K_Y - \pi^* K_Z - \pi^* \Delta) = \Tr\Big(\pi_* \cO_Y(K_Y - \pi^* K_Z - \pi^* \Delta)\Big).
        \]
        Since $(Z, \Delta)$ is KLT, $\mJ(\cO_Z, \Delta) = \cO_Z$ and hence from the displayed equation above, we see that $1 \in \pi_* \cO_Y(K_Y - \pi^* K_Z - \pi^* \Delta)$.  Note $\Tr$ sends $1$ to a unit (as we are in characteristic zero) and consider the map  
        \[
            \cO_Z \xleftarrow{\Tr} \pi_* \cO_Y(K_Y - \pi^* K_Z - \pi^* \Delta) = \myR \pi_* \cO_Y(K_Y - \pi^* K_Z - \pi^* \Delta).
        \]
        Thus $\cO_Z \subseteq \pi_* \cO_Y(K_Y - \pi^* K_Z - \pi^* \Delta) \xrightarrow{\Tr} \cO_Z$ is an isomorphism as it is multiplication by that same unit.  Hence our map $\Tr$ is split surjective, which is what we wanted to prove.
    \end{proof}

    A variant of this will be quite useful for us.

    \begin{proposition}
        \label{prop.SplittingForAlterationsNormalCrossingSupport}
        Suppose $Z$ is a smooth excellent integral scheme with a dualizing complex in characteristic zero.  Suppose $H$ is a $\bQ$-divisor on $Z$ such that the fractional part $\{ H \}$ of $H$ has simple normal-crossings support.  Then 
        \[
            \cO_Z(\lfloor H \rfloor ) \to \myR \pi_* \cO_Y(\lfloor \pi^* H \rfloor)
        \]
        splits in the derived category for any normal integral alteration $\pi : Y \to Z$.
    \end{proposition}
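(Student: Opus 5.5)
Write $H = \lfloor H \rfloor + \Delta$ with $\Delta = \{H\}$, so $0 \leq \Delta$, $\lfloor \Delta \rfloor = 0$, and $\Delta$ has simple normal-crossings support. Since $\lfloor H \rfloor$ is an integral Weil divisor on the smooth scheme $Z$, it is Cartier, and $\pi^*\lfloor H \rfloor$ is an integral Cartier divisor on $Y$. Hence $\lfloor \pi^* H \rfloor = \pi^* \lfloor H \rfloor + \lfloor \pi^* \Delta \rfloor$, because rounding down commutes with adding an integral divisor. It follows that
\[
    \cO_Y(\lfloor \pi^* H \rfloor) \cong \pi^* \cO_Z(\lfloor H \rfloor) \otimes \cO_Y(\lfloor \pi^* \Delta \rfloor).
\]
Here $\cO_Y(\lfloor \pi^*\Delta\rfloor)$ is a reflexive rank one sheaf on the normal scheme $Y$, and tensoring it with a line bundle pulled back from $Z$ causes no trouble.

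Next apply the projection formula with the line bundle $\sL = \cO_Z(\lfloor H \rfloor)$:
\[
    \myR \pi_* \cO_Y(\lfloor \pi^* H \rfloor) \cong \sL \otimes \myR \pi_* \cO_Y(\lfloor \pi^* \Delta \rfloor).
\]
Under this identification, the map $\cO_Z(\lfloor H \rfloor) \to \myR\pi_*\cO_Y(\lfloor \pi^*H\rfloor)$ becomes $\sL \otimes (-)$ applied to the natural map $\cO_Z \to \myR \pi_* \cO_Y(\lfloor \pi^* \Delta\rfloor)$. To check this compatibility, work locally where $\sL$ is trivialized by a local generator $g$. There the map sends $g$ to $g$, viewed as a section of $\cO_Y(\lfloor \pi^*H \rfloor)$, which is exactly $g$ times the image of $1$. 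The local identifications glue because the natural maps are defined globally, and both are the inclusion of rational functions.

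Finally, \autoref{lem.SplittingForAlterationsNormalCrossingSupport} gives a splitting $s : \myR\pi_*\cO_Y(\lfloor \pi^*\Delta\rfloor) \to \cO_Z$ in $D(Z)$. Then $\sL \otimes s$ splits the tensored map, since tensoring with a line bundle is an exact autoequivalence of $D(Z)$ and preserves compositions and identities.

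The only point needing care is the compatibility of the projection-formula isomorphism with the natural maps, that is, that the composite really is the map in the statement. I expect this to be routine: both maps are induced by the inclusion $\cO_Z(\lfloor H\rfloor) \subseteq \pi_*\cO_Y(\lfloor \pi^*H\rfloor)$ inside the function field, in the sense of the claim in the previous proof.
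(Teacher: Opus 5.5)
Your proof is correct and is essentially the paper's argument: use the projection formula to strip off the line bundle $\cO_Z(\lfloor H \rfloor)$, then apply \autoref{lem.SplittingForAlterationsNormalCrossingSupport} to $\{H\}$. The only difference is cosmetic: the paper first passes to a further alteration so that $\pi^*H$ is integral, whereas you use directly that $\lfloor \pi^* H\rfloor = \pi^*\lfloor H\rfloor + \lfloor \pi^*\{H\}\rfloor$, which works just as well since the lemma already handles arbitrary normal integral alterations.
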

    \begin{proof}
        By passing to a a further alteration of $Y$, We may assume that $Y$ is regular and that $\pi^* H$ has integer coefficients and hence is a Cartier divisor.  
        By the projection formula, it suffices to show that $\cO_Z \to \myR \pi_* \cO_Y(\pi^* H - \pi^* \lfloor H \rfloor)$ splits.  But 
        \[
            \pi^* H - \pi^* \lfloor H \rfloor = \pi^* (H - \lfloor H \rfloor) = \pi^* (\{ H \}).
        \]  
        The result follows from \autoref{lem.SplittingForAlterationsNormalCrossingSupport}.
    \end{proof}


\section{The derived threshold in characteristic zero}

We define the invariants of interest in this paper and provide some alternate characterizations of them.

Recall the following notation from \autoref{subsec.MultiplierModulesAndVanishing}.
For a proper map $\pi : Y \to \Spec R$, and any sheaf $\sF$ on $Y$, we write 
\[
    \myR\Gamma(\sF) := \myR\Gamma(Y, \sF).
\]

\begin{definition}
    \label{def.NonPrincipalDerivedThreshold}
    Suppose that $R$ is an excellent domain of equal characteristic zero with a dualizing complex, and set $X = \Spec R$.  Suppose $\fra, J \subseteq R$ are nonzero ideals with $\fra \subseteq \sqrt{J}$.  Set $V = V(J) \subseteq X$.  Let $\pi : Z \to X$ be a log resolution of $(X, \fra)$ and write $\fra \cdot \cO_Z = \cO_Z(-D)$.  Then we define 
    \[
        c_{\Hir}^J(\fra) = \inf\{ t \geq 0 \;|\; R \to R/J \otimes_{R}^{\myL} \myR\Gamma(\cO_Z(\lfloor t D \rfloor)) \text{ is zero in the derived category }\}.
    \]    
    We call it the \emph{derived Hironaka threshold}.  In the case that $\fra = (f)$ is principal, we write $c_{\Hir}^J(f) = c_{\Hir}^J(\fra)$.

    We have the following variant.  With notation as above, fix a set of generators $J = (x_1, \dots, x_n)$.  Set $\Kos_{\bullet}(\underline{x})$ to be the Koszul complex on those generators.  Then we define
    \[
        c_{\KH}^{\underline{x}}(\fra) = \inf\{ t \geq 0 \;|\; R \to \Kos_{\bullet}(\underline{x}) \otimes_{R}^{\myL} \myR\Gamma( \cO_Z(\lfloor t D \rfloor)) \text{ is zero in the derived category }\}.
    \]
    We call it the \emph{derived Koszul-Hironaka threshold} and denote the principal case as $c_{\KH}^{\underline{x}}(f)$.  
\end{definition}
Note $\myR\Gamma(\cO_Z(\lfloor tD \rfloor))$ is independent of the choice of log resolution.  Indeed, its Grothendieck dual is $\myR\Gamma(\omega_Z \otimes \cO_Z(-\lfloor t D \rfloor))$ which is isomorphic to $\mJ(\omega_R, \fra^t)$ by \autoref{eq.RelativeKVVanishing} (\cite{Murayama.RelativeVanishingForQSchemes}) and that is independent of the choice of log resolution.  
It also turns out that $c_{\KH}^{\underline{x}}(\fra)$ depends only on $J = (\underline{x})$ and not on the choice of generators.

\begin{lemma}
    \label{lem.KHThresholdIndependentOfGenerators}
    With notation as in \autoref{def.NonPrincipalDerivedThreshold}, if $J = (x_1, \dots, x_n) = (y_1, \dots, y_m)$, then 
    \[
        c_{\KH}^{\underline{x}}(\fra) = c_{\KH}^{\underline{y}}(\fra)  
    \]
    and so we call the common value $c_{\KH}^J(\fra)$. 
\end{lemma}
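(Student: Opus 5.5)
We want to show that for a fixed $t$, the map $R \to \Kos_{\bullet}(\underline{x}) \otimes^{\myL}_R M$ is zero in $D(R)$ if and only if $R \to \Kos_{\bullet}(\underline{y}) \otimes^{\myL}_R M$ is zero. Here $M := \myR\Gamma(\cO_Z(\lfloor tD \rfloor))$, and the map is the one induced by the unit $R \to \Kos_\bullet$ tensored with $R \to M$. The thresholds are infima over the same sets, so this equivalence suffices. The strategy is the one used for independence of generators of $I^{\KH}$ in \cite[Proposition 3.3]{EpsteinMcDonaldRGSchwede}: compare both Koszul complexes with the Koszul complex on the concatenated sequence $\underline{x},\underline{y}$.

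\textbf{Step 1 (adding redundant generators).} Suppose $y$ is a single element of $(\underline{x})$. There is a standard isomorphism of complexes
\[
\Kos_\bullet(\underline{x}, y) \cong \Kos_\bullet(\underline{x}, 0) \cong \Kos_\bullet(\underline{x}) \otimes_R \Kos_\bullet(0).
\]
It comes from a change of basis on the free module $R^{n+1}$: writing $y = \sum a_i x_i$, we replace the last basis vector $e$ by $e - \sum a_i e_i$. This isomorphism is compatible with the augmentation from $R$ in degree $0$. Moreover, $\Kos_\bullet(0) = [R \xrightarrow{0} R] \cong R \oplus R[1]$, and the unit $R \to \Kos_\bullet(0)$ is the inclusion of the first summand. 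Hence
\[
\Kos_\bullet(\underline{x},y)\otimes^{\myL} M \cong \big(\Kos_\bullet(\underline{x}) \otimes^{\myL} M\big) \oplus \big(\Kos_\bullet(\underline{x}) \otimes^{\myL} M\big)[1],
\]
and the map from $R$ is the map for $\underline{x}$ composed with inclusion into the first summand. So one map is zero if and only if the other is.

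\textbf{Step 2 (conclusion).} Each $y_j$ lies in $J = (\underline{x})$, so iterating Step 1 shows that vanishing for $\underline{x}$ is equivalent to vanishing for the sequence $\underline{x},\underline{y}$. Koszul complexes are symmetric under permuting the sequence, compatibly with the unit, so the sequence $\underline{x},\underline{y}$ may be reordered freely. Each $x_i$ lies in $(\underline{y})$, so by the same argument vanishing for $\underline{x},\underline{y}$ is equivalent to vanishing for $\underline{y}$. It follows that the two sets of $t$ coincide, and therefore so do their infima.

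\textbf{Main point to check.} The only real care needed is naturality. The change-of-basis isomorphism must respect the degree-$0$ augmentation $R \to \Kos_0 = R$, so that the composite from $R$ matches under the isomorphism. It does, because the change of basis only affects $\Kos_{\geq 1}$. The splitting $\Kos_\bullet(0)\cong R\oplus R[1]$ must likewise be compatible with the unit, which is clear. Once these are verified, the direct sum decomposition gives the equivalence of vanishing, since a map into a direct sum is zero if and only if each of its components is zero.
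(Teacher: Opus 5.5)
Your proposal is correct and is essentially the paper's proof. The paper likewise reduces to adjoining a single generator $g \in J$. It then uses the triangle $\Kos_{\bullet}(\underline{x}) \xrightarrow{g} \Kos_{\bullet}(\underline{x}) \to \Kos_{\bullet}(\underline{x}, g) \xrightarrow{+1}$ with $g$ nullhomotopic, which splits $\Kos_{\bullet}(\underline{x})$ off $\Kos_{\bullet}(\underline{x},g)$ compatibly with the unit. Your change of basis $e \mapsto e - \sum a_i e_i$ is the chain-level form of that same nullhomotopy: it identifies the cone of $g$ with the cone of $0$.
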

\begin{proof}
    It suffices to consider the case $y_1 = x_1, \dots, y_n = x_n, y_{n+1} = g \in J$.  Consider the distinguished triangle 
    \[
        \Kos_{\bullet}(\underline{x}) \xrightarrow{g} \Kos_{\bullet}(\underline{x}) \to \Kos_{\bullet}(\underline{y}) \xrightarrow{+1}.  
    \] 
    The multiplication-by-$g$ map is nullhomotopic (\cf \cite[Remark 3.4]{EpsteinMcDonaldRGSchwede}) and hence zero in the derived category.  This means that there exists a map $\psi$ in the derived category such that  $\Kos_{\bullet}(\underline{x}) \to \Kos_{\bullet}(\underline{y}) \xrightarrow{\psi} \Kos_{\bullet}(\underline{x})$ is an isomorphism.  
    The result follows.
\end{proof}

\begin{remark}
    \label{rem.CaseJ=R}
    If $J = R$, then it is easy to see that $c^J_{\Hir}(\fra) = c^J_{\KH}(\fra) = 0$.
\end{remark}

Instead of using a log resolution of singularities, it is equivalent to check a log alteration.

\begin{lemma}
    \label{lem.WeCanUseLogAlterationInstead}    
    With notation as in \autoref{def.NonPrincipalDerivedThreshold} suppose $Y \to \Spec R$ is any alteration from a normal integral scheme that factors through a log alteration\footnote{for instance, a log resolution} $Y' \to \Spec R$ of $(R, \fra)$ with $\fra \cO_Y = \cO_Y(-G)$.  Then we have 
    \[
        c_{\Hir}^J(\fra) = \inf\{ t \geq 0 \;|\; R \to R/J \otimes_{R}^{\myL} \myR\Gamma(\cO_Y(\lfloor t G \rfloor)) \text{ is zero in the derived category }\}.
    \]
    and
    \[
        c_{\KH}^{J}(\fra) = \inf\{ t \geq 0 \;|\; R \to \Kos_{\bullet}(\underline{x}) \otimes_{R}^{\myL} \myR\Gamma( \cO_Y(\lfloor t G \rfloor)) \text{ is zero in the derived category }\}.
    \]
    In particular, for a fixed $t \geq 0$ and setting $B = R/J$ or $B = \Kos_{\bullet}(\underline{x})$, we have that  
    \[
        R \to B \otimes_{R}^{\myL} \myR\Gamma(\cO_Y(\lfloor t G \rfloor))
    \] 
    is zero in the derived category if and only if 
    \[ 
        R \to B \otimes_{R}^{\myL} \myR\Gamma(\cO_Z(\lfloor t D \rfloor))
    \]
    is zero in the derived category. 
\end{lemma}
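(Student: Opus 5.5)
The plan is to prove the final ``in particular'' statement for a fixed $t$; the two infimum formulas then follow at once. The strategy is to compare both $Y$ and $Z$ against a common alteration and then use the splitting result, \autoref{prop.SplittingForAlterationsNormalCrossingSupport}.

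First I reduce to the case where $Y$ dominates $Z$. Take a normal integral alteration $W$ dominating both $Y$ and $Z$, for instance the normalization of the main component of $Y \times_X Z$. Write $\fra \cO_W = \cO_W(-D_W)$. Since $D_W$ is the pullback of $G$ (and of $D$), the Claim in the proof of \autoref{lem.SplittingForAlterationsNormalCrossingSupport} gives natural maps
\[
\myR\Gamma(\cO_Y(\lfloor tG\rfloor)) \to \myR\Gamma(\cO_W(\lfloor tD_W\rfloor)), \qquad \myR\Gamma(\cO_Z(\lfloor tD\rfloor)) \to \myR\Gamma(\cO_W(\lfloor tD_W\rfloor)).
\]
Both maps are compatible with the structure maps from $R$. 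Hence it suffices to show the following: for any normal integral alteration $W \to V$ where $V$ is a log alteration of $(R,\fra)$ with $\fra\cO_V = \cO_V(-E)$, the map
\[
\myR\Gamma(\cO_V(\lfloor tE\rfloor)) \to \myR\Gamma(\cO_W(\lfloor t D_W\rfloor))
\]
is split in $D(R)$. We apply this with $V = Z$ and with $V = Y'$.

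Second, I obtain the splitting. Set $H = tE$ on the smooth scheme $V$. Its fractional part has simple normal crossings support because $E$ does. \autoref{prop.SplittingForAlterationsNormalCrossingSupport} then says $\cO_V(\lfloor H\rfloor) \to \myR\mu_*\cO_W(\lfloor \mu^*H\rfloor)$ splits in $D(V)$. Applying $\myR\Gamma$ gives a split map in $D(R)$, compatible with $R$.

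Third, I transfer the vanishing. Tensoring with $B$ preserves the splitting. So if $A \to A'$ is split and compatible with $R$, then $R \to B\otimes^{\myL} A$ is zero if and only if $R \to B\otimes^{\myL} A'$ is zero. The ``only if'' direction holds by composing with $A \to A'$; the ``if'' direction holds by composing with the retraction. Applying this to $Z \to W$ and to $Y' \to W$ shows that vanishing on $Z$ is equivalent to vanishing on $W$, which is equivalent to vanishing on $Y'$.

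The remaining point is the passage from $Y'$ to $Y$. The map $Y' \to W$ has to be replaced by $Y \to Y' \leftarrow W$, since $W$ dominates $Y$, which dominates $Y'$. Vanishing on $Y'$ implies vanishing on $Y$ by composition. Vanishing on $Y$ implies vanishing on $W$, which is equivalent to vanishing on $Y'$ by the splitting. This closes the loop.

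The main obstacle I expect is bookkeeping: checking that the natural maps from the Claim really commute with the maps from $R$, and that $G$ pulled back from $Y'$ has the right form. This requires $\fra\cO_Y = \cO_Y(-G)$ with $G$ equal to the pullback of the divisor on $Y'$. That holds because $\fra\cO_{Y'}$ is invertible, so no extra care is needed there.
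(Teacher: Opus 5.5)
Your proposal is correct and follows essentially the same route as the paper: you pass to a common normal alteration dominating $Y$ and $Z$, get one implication by composing with the natural maps, and get the converse from the derived splitting of \autoref{prop.SplittingForAlterationsNormalCrossingSupport}. One small detail the paper handles and you omit is that this proposition is stated for $\bQ$-divisors, so you should first replace $t$ by a slightly larger rational number; this changes none of $\lfloor tD\rfloor$, $\lfloor tG\rfloor$, $\lfloor tD_W\rfloor$, since round-down is right-continuous and only finitely many coefficients are involved.
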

\begin{proof}
    For the first statements, we may take infimum over $t \in \bQ_{\geq 0}$.  For the final statement, by slightly increasing $t$ without changing $\lfloor tG \rfloor$ or $\lfloor tD \rfloor$, we may assume that $t \in \bQ$.  Hence, we may restrict to rational $t$.  

    Suppose $Y_1 \to \Spec R$ is a log alteration of $(R, \fra)$ and $Y_2 \to Y_1 \to \Spec R$ is a further alteration with $Y_2$ normal and integral.  We write $\fra \cO_{Y_i} = \cO_{Y_i}(-D_i)$.  We shall first show that $R \to B \otimes_{R}^{\myL} \myR\Gamma(\cO_{Y_1}(\lfloor t D_1 \rfloor))$ is zero if and only if $R \to B \otimes_{R}^{\myL} \myR\Gamma(\cO_{Y_2}(\lfloor t D_2 \rfloor))$ is zero.  

    Certainly if $R \to R/J \otimes^{\myL}_R \myR\Gamma(\cO_{Y_1}(\lfloor t D_1 \rfloor))$ is zero in the derived category, so is $R \to R/J \otimes^{\myL}_R \myR\Gamma(\cO_{Y_2}(\lfloor t D_2 \rfloor))$, and likewise replacing $R/J$ with $\Kos_{\mydot}(\underline{x})$.  
    On the other hand notice that $\cO_{Y_1}(\lfloor t D_1 \rfloor) \to \myR \nu_* \cO_{Y_2}(\lfloor t D_2 \rfloor)$ splits by \autoref{prop.SplittingForAlterationsNormalCrossingSupport}.  

    Now, consider the following sets.
    \[
        S_{Y_i} = \{ t \geq 0 \;|\; R \to B \otimes_{R}^{\myL} \myR\Gamma(\cO_{Y_i}(\lfloor t D_i \rfloor)) \text{ is zero in the derived category }\}
    \]
    our work above shows that $S_{Y_1} = S_{Y_2}$ and hence their infima also agree.  

    If $\pi : Z \to \Spec R$ is a log resolution and $\nu : Y \to \Spec R$ is a log alteration, we can pick a further normal integral alteration $Y_2 \to \Spec R$ factoring through both $\pi$ and $\nu$ (and note we have $Y_2 \to Y \to Y' \to \Spec R$ where $Y'$ is a log alteration).  Then our observation above shows that $S_{Z} = S_{Y_2} = S_{Y'} = S_{Y}$ and so their infima also agree.  This completes the proof.  
\end{proof}


We prove some basic properties of $c^J_{\Hir}(\fra)$ and $c^J_{\KH}(\fra)$.  

\begin{proposition}
    \label{prop.InitialProperties}
    Suppose that $R$ is an excellent integral domain of equal characteristic zero with a dualizing complex.  Suppose further that $\fra, J \subseteq R$ are nonzero ideals with $\fra \subseteq \sqrt{J}$.    
    
    Then the following hold.
    \begin{enumerate}
        \item     \label{prop.InitialProperties.ComparisonOfTwoThresholds}
             $c^J_{\Hir}(\fra) \leq c^J_{\KH}(\fra)$.
        \item For any integer $l > 0$ we have that $c_{\Hir}^J(\fra) = l c_{\Hir}^J(\fra^l)$ and likewise $c_{\KH}^J(\fra) = l c_{\KH}^J(\fra^l)$. \label{prop.InitialProperties.PowersOfa}
        \item If $I \supseteq J$, then $c_{\Hir}^I(\fra) \leq c_{\Hir}^J(\fra)$ and likewise $c_{\KH}^I(\fra) \leq c_{\KH}^J(\fra)$.\label{prop.InitialProperties.BiggerJ}
        \item If $\frb \subseteq \fra$, then $c_{\Hir}^J(\frb) \leq c_{\Hir}^J(\fra)$ and likewise $c_{\KH}^J(\frb) \leq c_{\KH}^J(\fra)$.\label{prop.InitialProperties.Biggera}
        \item We have $c_{\Hir}^J(\fra) = c_{\Hir}^J(\overline{\fra})$ and $c_{\KH}^J(\fra) = c_{\KH}^J(\overline{\fra})$ where $\overline{(-)}$ denotes integral closure.\label{prop.InitialProperties.IntegralClosureAgnostic}
    \end{enumerate}
\end{proposition}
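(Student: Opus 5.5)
The plan is to fix one log resolution $\pi : Z \to \Spec R$ of $(R, \fra)$ with $\fra\cO_Z = \cO_Z(-D)$, write $B$ for $R/J$ or $\Kos_\bullet(\underline{x})$, and compare the two threshold sets directly; each item then reduces to a statement of the form ``if the map for one datum is zero, so is the map for the other''. Throughout I use that if a map $R \to M$ in $D(R)$ is zero and factors some other map $R \to M \to M'$, then the composite is zero.

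For (1), I will use the natural map $\Kos_\bullet(\underline{x}) \to R/J$, a map of complexes that is the identity on $R$ in degree zero and compatible with the augmentations from $R$. Tensoring with $\myR\Gamma(\cO_Z(\lfloor tD\rfloor))$, the map $R \to R/J \otimes^{\myL} \myR\Gamma(\ldots)$ factors through $R \to \Kos_\bullet(\underline{x}) \otimes^{\myL} \myR\Gamma(\ldots)$. So the KH set is contained in the Hir set, and the inequality follows. Item (3) is the same argument. For Hir, use $R/J \to R/I$. For KH, choose generators of $I$ extending those of $J$ via \autoref{lem.KHThresholdIndependentOfGenerators}; then $\Kos_\bullet(\underline{x}) \to \Kos_\bullet(\underline{x},\underline{y}) = \Kos_\bullet(\underline{x}) \otimes \Kos_\bullet(\underline{y})$, induced by $R \to \Kos_\bullet(\underline{y})$, is compatible with the maps from $R$.

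For (2), $Z$ is also a log resolution of $(R,\fra^l)$, with $\fra^l\cO_Z = \cO_Z(-lD)$. Then $\lfloor s\cdot lD\rfloor = \lfloor (ls)D\rfloor$, so $s$ lies in the set for $\fra^l$ exactly when $ls$ lies in the set for $\fra$, which gives $c(\fra) = l\,c(\fra^l)$.

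For (5), I will pass to the normalized blowup of $\fra$ followed by a log resolution. On any log resolution $Z$ of $(R,\fra)$ on which $\overline{\fra}\cO_Z$ is also invertible, we have $\overline{\fra}\cO_Z = \fra\cO_Z$. The reason is that $\overline{\fra}\cO_Z \subseteq \overline{\fra\cO_Z} = \fra\cO_Z$, since an invertible ideal on a normal scheme is integrally closed, and the reverse containment is clear. Hence $D$ is literally the same divisor and the sets coincide. The one thing to check is that one $Z$ serves both ideals, which follows by taking a common log resolution; \autoref{lem.WeCanUseLogAlterationInstead} lets me choose it freely.

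Item (4) is the main point. I take $Z$ a common log resolution of $\fra$, $\frb$ (and $\fra\frb$), with $\frb\cO_Z = \cO_Z(-D')$. Since $\frb \subseteq \fra$, we get $D' \ge D$, so $\lfloor tD\rfloor \le \lfloor tD'\rfloor$, giving an inclusion $\cO_Z(\lfloor tD\rfloor) \to \cO_Z(\lfloor tD'\rfloor)$ compatible with the maps from $\cO_Z$. Then $R \to B\otimes^{\myL}\myR\Gamma(\cO_Z(\lfloor tD'\rfloor))$ factors through the corresponding map for $D$. So if $t$ works for $\fra$ it works for $\frb$, giving $c(\frb) \le c(\fra)$. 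The main obstacle is just ensuring both thresholds can be computed on the same $Z$, which requires the independence from the log resolution noted after \autoref{def.NonPrincipalDerivedThreshold}, or equivalently \autoref{lem.WeCanUseLogAlterationInstead}. Also, for (4) the hypothesis $\frb \subseteq \sqrt{J}$ holds automatically.
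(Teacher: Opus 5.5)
Your proposal is correct and follows the paper's proof. You use the factorizations $R \to \Kos_{\bullet}(\underline{x}) \to R/J$ and $R/J \to R/I$, with extended generators in the Koszul case. You replace $D$ by $lD$ on the same resolution, and you compare $\lfloor tD\rfloor \le \lfloor tD'\rfloor$ on a common log resolution.

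Your argument for (5), $\overline{\fra}\cO_Z \subseteq \overline{\fra\cO_Z} = \fra\cO_Z$, spells out the paper's remark that $D$ does not change; it shows any log resolution of $(R,\fra)$ already serves for $\overline{\fra}$, so no common resolution is needed. One small correction to (1): the degree-zero component of $\Kos_{\bullet}(\underline{x}) \to R/J$ is the quotient map $R \to R/J$, not the identity.
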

\begin{proof}
    For \autoref{prop.InitialProperties.ComparisonOfTwoThresholds}, note that we have a factorization $R \to \Kos(\underline{x}) \to R/J$.  Thus the set we take the infimum over to compute $c^J_{\Hir}(\fra)$ is at least as large as the one we use for $c^J_{\KH}(\fra)$.

    For \autoref{prop.InitialProperties.PowersOfa}, notice that $Z \to \Spec R$ is also a log resolution of $(R, \fra^l)$ and also that $\fra^l \cO_Z = \cO_Z(-l D)$.  The result quickly follows.

    For \autoref{prop.InitialProperties.BiggerJ}, if $I \supseteq J$, then we have a factorization $R \to R/J \to R/I$, and so for $I$ we are taking the infimum of a possibly larger set which yields a potentially smaller value, proving \autoref{prop.InitialProperties.BiggerJ} in the $c_{\Hir}$ case.  For $c_{\KH}$, choose explicit generators $\underline{x}$ for $J$ and then extend those to generators $\underline{y}$ for $I$.  This yields a map $\Kos_{\bullet}(\underline{x}) \to \Kos_{\bullet}(\underline{y})$, and then we proceed as above.

    Likewise if $\frb \subseteq \fra$, then if we take a common log resolution for both and write $\frb \cO_Z = \cO_Z(-E)$, we see that $D \leq E$.  Again, this yields a larger set to be taking the infimum over, yielding \autoref{prop.InitialProperties.Biggera}.  

    Finally, \autoref{prop.InitialProperties.IntegralClosureAgnostic} follows immediately from the definition as the divisor $D$ does not change.
\end{proof}

We make some other useful observations.  

\begin{lemma}
    \label{lem.InfsAreMins}
       With notation as in \autoref{def.NonPrincipalDerivedThreshold}, assuming that $c_{\Hir}^J(\fra)$ and $c_{\KH}^J(\fra)$ are finite (which we shall prove below in \autoref{lem.UpperBound}), the infima defining $c_{\Hir}^J(\fra)$ and $c_{\KH}^J(\fra)$ are actually minima.  In other words,
         \[
         \begin{array}{rclc}
            c_{\Hir}^J(\fra) & = & \min\{ t \geq 0 \;|\; R \to R/J \otimes_{R}^{\myL} \myR \Gamma( \cO_Z(\lfloor t D \rfloor) )\text{ is zero }\} & \text{ and }\\
            c_{\KH}^{J}(\fra) & = & \min\{ t \geq 0 \;|\; R \to \Kos_{\bullet}(\underline{x}) \otimes_{R}^{\myL} \myR \Gamma( \cO_Z(\lfloor t D \rfloor)) \text{ is zero }\}.
         \end{array}
        \]   
\end{lemma}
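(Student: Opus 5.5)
The plan rests on the observation that the sheaf $\cO_Z(\lfloor tD \rfloor)$ is locally constant in $t$ from the right. Write $D = \sum_i a_i E_i$ with $a_i \in \bZ_{>0}$ and the $E_i$ prime. For fixed $t_0 \geq 0$, choose $\epsilon > 0$ with $\epsilon < \min_i \big(\lceil t_0 a_i + 0 \rceil' - t_0 a_i\big)/a_i$, where $\lceil \cdot \rceil'$ denotes the smallest integer strictly larger than the argument. Then $\lfloor t a_i \rfloor = \lfloor t_0 a_i \rfloor$ for every $i$ and every $t \in [t_0, t_0 + \epsilon)$. Consequently $\lfloor tD \rfloor = \lfloor t_0 D \rfloor$ on the half-open interval $[t_0, t_0+\epsilon)$. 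Only finitely many $E_i$ occur, so such an $\epsilon$ exists. In particular, the set of $t$ at which $\lfloor tD \rfloor$ changes is the discrete set $\bigcup_i \frac{1}{a_i}\bZ$.

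Next I record that the sets being minimized are up-closed. Fix $B = R/J$ or $B = \Kos_{\bullet}(\underline{x})$ and let $S = \{t \geq 0 \mid R \to B \otimes^{\myL}_R \myR\Gamma(\cO_Z(\lfloor tD\rfloor)) \text{ is zero}\}$. If $t \leq t'$, then $\lfloor tD \rfloor \leq \lfloor t'D \rfloor$, because $D \geq 0$. This gives an inclusion $\cO_Z(\lfloor tD\rfloor) \to \cO_Z(\lfloor t'D\rfloor)$ compatible with the maps from $\cO_Z$. Applying $\myR\Gamma$ and $B \otimes^{\myL}_R -$, the map for $t'$ factors through the map for $t$. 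Hence $t \in S$ implies $t' \in S$, so $S$ is an interval of the form $[c, \infty)$ or $(c,\infty)$, where $c = \inf S$ is finite by \autoref{lem.UpperBound}.

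Finally I rule out the open case. Suppose $c \notin S$, and take $\epsilon$ as in the first step with $t_0 = c$. Since $c = \inf S$, there is some $t \in S$ with $c < t < c+\epsilon$. By the first step, $\lfloor tD \rfloor = \lfloor cD \rfloor$, so the two morphisms in the derived category coincide. Hence $c \in S$, a contradiction, and therefore $c = \min S$.

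The only mildly delicate point is the first step, namely that the floor function is right-continuous coefficientwise with a uniform $\epsilon$. This follows from the finiteness of the set of components of $D$. The argument also shows that $c$ lies in $\bigcup_i \frac{1}{a_i}\bZ$, which gives rationality.
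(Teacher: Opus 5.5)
Your proposal is correct and is essentially the paper's argument: the paper's one-line proof is exactly your observation that $\lfloor tD \rfloor = \lfloor (t+\epsilon)D \rfloor$ for $0 < \epsilon \ll 1$, so the morphism at the infimum $c$ coincides with the morphism at some nearby $t$ in the set. Your up-closedness step is harmless but not needed, since the definition of infimum alone already produces a $t$ in the set with $c \leq t < c+\epsilon$.
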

\begin{proof}
    Simply notice that $\myR\Gamma(\cO_Z(\lfloor t D \rfloor)) = \myR\Gamma( \cO_Z(\lfloor (t+\epsilon) D \rfloor))$ for $1 \gg \epsilon > 0$. 
\end{proof}

\begin{lemma}
    \label{lem.RationalThresholds}
    With notation as in \autoref{def.NonPrincipalDerivedThreshold}, $c^J_{\Hir}(\fra)$ and $c^J_{\KH}(\fra)$ are rational numbers.
\end{lemma}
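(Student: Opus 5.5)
The plan is to show that the function $t \mapsto \lfloor tD \rfloor$ is piecewise constant, with breakpoints at rational numbers. Then each threshold is the left endpoint of one of these constancy intervals, and hence rational. As in \autoref{lem.InfsAreMins}, we take the finiteness of the thresholds (to be shown in \autoref{lem.UpperBound}) as given.

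Write $D = \sum_{i=1}^r d_i E_i$ with $d_i \in \bZ_{>0}$, using only finitely many prime divisors. The coefficient of $E_i$ in $\lfloor tD \rfloor$ is $\lfloor t d_i \rfloor$, and it changes only when $t d_i \in \bZ$, that is, when $t \in \frac{1}{d_i}\bZ$. Let
\[
    \Lambda = \bigcup_{i} \tfrac{1}{d_i}\bZ_{\geq 0}.
\]
This is a discrete subset of $\bQ_{\geq 0}$. On each half-open interval $[\lambda, \lambda')$ between consecutive elements of $\Lambda$, the divisor $\lfloor tD \rfloor$ is constant. Consequently the complex $\myR\Gamma(\cO_Z(\lfloor tD \rfloor))$ and the map $R \to B \otimes^{\myL}_R \myR\Gamma(\cO_Z(\lfloor tD\rfloor))$ are also constant there, where $B = R/J$ or $B = \Kos_{\bullet}(\underline{x})$.

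Hence the set $S = \{t \geq 0 \mid \text{the map is zero}\}$ is a union of such intervals. Its infimum $c$ is finite by \autoref{lem.UpperBound}, and by \autoref{lem.InfsAreMins} it lies in $S$. Let $[\lambda, \lambda')$ be the interval containing $c$. Then $\lambda \in S$, because the map at $\lambda$ is the same as the map at $c$. Since $\lambda \leq c$ and $c$ is the infimum of $S$, we get $c = \lambda \in \Lambda \subseteq \bQ$. This argument applies to both $c^J_{\Hir}(\fra)$ and $c^J_{\KH}(\fra)$.

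No step here is a real obstacle; the only input is finiteness, which is deferred to \autoref{lem.UpperBound}. One small point needs a check: $D$ has finitely many components. This holds because $Z$ is Noetherian, being proper over $\Spec R$, so $D$ is a finite sum of prime divisors and $\Lambda$ is discrete.
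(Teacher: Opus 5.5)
Your proof is correct and follows the paper's argument: both observe that $\lfloor tD \rfloor$, and hence the map $R \to B \otimes^{\myL}_R \myR\Gamma(\cO_Z(\lfloor tD \rfloor))$, only changes at the discrete set of rational breakpoints $\bigcup_i \frac{1}{d_i}\bZ_{\geq 0}$, so the threshold must be one of them. Your version is simply more explicit than the paper's one-line proof, spelling out the left-endpoint argument and the reliance on finiteness from \autoref{lem.UpperBound}.
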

\begin{proof}
        For $t \geq 0$, the divisor $\lfloor t D \rfloor$, and hence the complex $\myR\Gamma(\cO_Z(\lfloor t D \rfloor))$, only changes at a set of rational numbers $t \geq 0$ without accumulation points.    
The lemma follows.
\end{proof}

\begin{lemma}
    \label{lem.Localization}
    With notation as in \autoref{def.NonPrincipalDerivedThreshold}, we have that 
    \[
        \begin{array}{rclc}
            c_{\Hir}^J(\fra) & = & \max \{ c_{\Hir}^{J_{\fram}}(\fra_{\fram}) \;|\; \fram \in \mSpec R \} & \text{ and }\\
            c_{\KH}^J(\fra) & = & \max \{ c_{\KH}^{J_{\fram}}(\fra_{\fram}) \;|\; \fram \in \mSpec R \} 
        \end{array}
    \]
\end{lemma}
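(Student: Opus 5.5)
The plan is to reduce the statement to a local-global principle for vanishing of maps in the derived category, and then to handle the passage from a supremum to a maximum separately. Fix a log resolution $\pi : Z \to \Spec R$ of $(R,\fra)$ with $\fra\cO_Z = \cO_Z(-D)$, and write $B$ for either $R/J$ or $\Kos_{\bullet}(\underline{x})$. For each $t \geq 0$ set $C_t := B \otimes^{\myL}_R \myR\Gamma(\cO_Z(\lfloor tD\rfloor))$ and let $\alpha_t : R \to C_t$ be the natural map.

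First I would check compatibility with localization. Flat base change gives that $Z_{\fram} := Z \times_R R_{\fram} \to \Spec R_{\fram}$ is a log resolution of $(R_{\fram}, \fra_{\fram})$ with divisor $D|_{Z_{\fram}}$. It also gives $\myR\Gamma(\cO_Z(\lfloor tD\rfloor)) \otimes_R R_{\fram} \cong \myR\Gamma(\cO_{Z_{\fram}}(\lfloor tD \rfloor))$. Tensor products commute with localization, so $(\alpha_t)_{\fram}$ is exactly the map used in defining the thresholds for $(R_{\fram}, \fra_{\fram}, J_{\fram})$. If $J_{\fram} = R_{\fram}$, the local threshold is $0$ by \autoref{rem.CaseJ=R}, which is consistent.

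The key step is that $\alpha_t = 0$ in $D(R)$ if and only if $(\alpha_t)_{\fram} = 0$ for every maximal ideal $\fram$. A map $R \to C_t$ is the same as an element of $H^0(C_t)$, namely the image of $1$. Since $\Hom_{D(R)}(R, C_t) = H^0(C_t)$ and $H^0$ commutes with localization, $\alpha_t = 0$ exactly when this class vanishes. An element of an $R$-module vanishes if and only if it vanishes at every maximal ideal. This proves the key step and needs no finiteness of the complexes. Consequently the set $S$ of $t$ with $\alpha_t = 0$ equals $\bigcap_{\fram} S_{\fram}$, with $S_{\fram}$ the corresponding local sets.

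Each $S_{\fram}$ is upward closed. For $t' \geq t$, the map $\alpha_{t'}$ factors through $\alpha_t$ via $\cO_Z(\lfloor tD \rfloor) \to \cO_Z(\lfloor t'D\rfloor)$, since $D \geq 0$. By \autoref{lem.InfsAreMins}, each $S_{\fram}$ is therefore $[c_{\fram}, \infty)$ with $c_{\fram} := c^{J_{\fram}}(\fra_{\fram})$. Hence $S = [\sup_{\fram} c_{\fram}, \infty)$, which gives $c^J(\fra) = \sup_{\fram} c_{\fram}$.

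The main obstacle is showing that this supremum is attained, and I would handle it using discreteness. Every $c_{\fram}$ is a jumping value of $t \mapsto \lfloor tD \rfloor$ on $Z_{\fram}$, and such values are multiples $k/d_i$ where $d_i$ ranges over the finitely many coefficients of $D$. These candidate values have no accumulation points. On the other hand, $\sup_{\fram} c_{\fram} = c^J(\fra)$, which is finite by \autoref{lem.UpperBound}. A bounded subset of a discrete set is finite, so the supremum is a maximum.
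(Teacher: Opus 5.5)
Your proposal is correct and takes the same route as the paper. The paper's proof likewise combines two facts: $R \to B \otimes^{\myL} \myR\Gamma(\cO_Z(\lfloor tD\rfloor))$ vanishes if and only if it vanishes after localizing at every maximal ideal, and threshold values can only occur at the discrete set of jump points of $t \mapsto \lfloor tD \rfloor$. You spell out details the paper leaves implicit: flat base change, $\Hom_{D(R)}(R, C_t) = H^0(C_t)$, upward closedness of the vanishing sets, and boundedness via \autoref{lem.UpperBound}, which is what makes the supremum a maximum.
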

\begin{proof}
    Just as in \autoref{lem.RationalThresholds}, the only possible threshold values are those $t$ such that $\lfloor t D \rfloor \neq \lfloor (t - \epsilon) D \rfloor$ for all $\epsilon > 0$.  Furthermore, for $B = R/J$ or $\Kos_{\bullet}({\bf x}; R)$, where $J = ({\bf x})$, we have the map $R \to B \otimes^{\myL} \myR\Gamma(\cO_Z(\lfloor t D \rfloor))$ is zero if and only if it is zero after localizing at all maximal ideals.  The result follows.
\end{proof}

In \autoref{lem.UpperBound} below, we will show that both $c_{\Hir}^J(\fra)$ and $c_{\KH}^J(\fra)$ are finite, and if $J \neq R$ then positive, but, a priori, they could be infinite or zero.  To prove that, we use the following strategy which lets us reduce to the principal case in many situations.  This approach is also pervasive in the study of multiplier ideals, compare with \cite[Proposition 9.2.28]{LazarsfeldPositivity2}.  In our generality, it is essentially \cite[Theorem 10.1, Corollary 10.3]{LyuMurayama.RelativeMMPForExcellent} or \cite[Lemma 2.12]{McDonald.MutliplierIdealsViaDerivedSplittings}.

\begin{lemma}
    \label{prop.GeneralElementComparisonLocal}
    We adopt the notation of \autoref{def.NonPrincipalDerivedThreshold} with $\fra = (f_1, \dots, f_r)$.  Additionally assume that $(R, \fram, k)$ is local.  For any integer $m > 0$, choose in succession, $h_1, \dots, h_m \in \fra$ general $\bQ$-linear  
    combinations of the $f_i$  and set $h = \prod h_i \in \fra^m$.  
    Then for any $m > t \geq 0$, we have that $\Div_Z(h)$ has normal-crossing support, 
    \[ 
    \lfloor tD\rfloor =  \lfloor {t \over m} \Div_Z(h)\rfloor.
    \]
    Thus if $m > c^J_{\KH}(\fra)$ we see that  
    \[
    c^{J}_{\KH}(\fra) = m c^{J}_{\KH}(h). 
    \]
    Also if $m > c^J_{\Hir}(\fra)$ then we have that 
    \[ 
        c^{J}_{\Hir}(\fra) = m c^{J}_{\Hir}(h). 
    \]
\end{lemma}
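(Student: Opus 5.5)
Since $Z$ is a log resolution of $(R,\fra)$, we have $\fra\cO_Z=\cO_Z(-D)$, and $\fra\cO_Z$ is generated by the pullbacks of the $f_i$. Hence for a general $\bQ$-linear combination $h_1$ we can write $\Div_Z(h_1)=D+H_1$, where $H_1$ is the strict transform part. Applying Bertini to the base-point-free linear system spanned by the $f_i$ on the complement of $D$, $H_1$ is reduced, nonsingular, and meets $\Supp D\cup\mathrm{Exc}(\pi)$ transversally, so $D+H_1$ has simple normal-crossing support. For Bertini in this excellent, non-finite-type setting I would invoke \cite[Theorem 10.1, Corollary 10.3]{LyuMurayama.RelativeMMPForExcellent} or \cite[Lemma 2.12]{McDonald.MutliplierIdealsViaDerivedSplittings}, which is exactly why the statement works over a local ring and uses general combinations chosen in succession. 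Choosing $h_2,\dots,h_m$ successively, each general with respect to the snc divisor built so far, we get
\[
\Div_Z(h)=mD+H_1+\dots+H_m
\]
with all $H_i$ reduced, pairwise without common components, and the whole divisor snc. I expect this Bertini step, and in particular making sure that all $m$ general elements can be chosen simultaneously in the local excellent setting, to be the main obstacle. Everything afterwards is formal.

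Next, for $0\le t<m$,
\[
\tfrac{t}{m}\Div_Z(h)=tD+\tfrac{t}{m}\sum_i H_i.
\]
The $H_i$ are reduced and share no components with each other or with $D$, and $t/m<1$. So along each component of some $H_i$ the coefficient is $t/m$, whose floor is $0$, and along components of $D$ the coefficient is unchanged. Thus $\lfloor \tfrac tm\Div_Z(h)\rfloor=\lfloor tD\rfloor$. Since $\Div_Z(h)$ has snc support, $Z$ is also a log resolution of $(R,h)$, with $h\cO_Z=\cO_Z(-\Div_Z(h))$. By \autoref{lem.WeCanUseLogAlterationInstead} (or directly from the definition), $c^J_{\KH}(h)$ and $c^J_{\Hir}(h)$ can therefore be computed on this same $Z$ using the divisor $\Div_Z(h)$.

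Finally, compare the two sets over which infima are taken. Let $S_{\fra}$ be the set of $t\ge0$ for which $R\to B\otimes^{\myL}\myR\Gamma(\cO_Z(\lfloor tD\rfloor))$ is zero, and $S_h$ the set of $s\ge0$ for which the analogous map with $\lfloor s\Div_Z(h)\rfloor$ is zero; here $B=\Kos_\bullet(\underline x)$ or $B=R/J$. Setting $t=ms$, the floor identity gives $t\in S_{\fra}\iff s\in S_h$ for all $t\in[0,m)$. If $m>c:=c^J_{\KH}(\fra)$, then $c$ is a minimum by \autoref{lem.InfsAreMins}, so $c\in S_{\fra}\cap[0,m)$ and hence $c/m\in S_h$. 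Conversely, every $s<c/m$ has $ms<c<m$, so $ms\notin S_{\fra}$ and therefore $s\notin S_h$. Hence $\inf S_h=c/m$, that is, $c^J_{\KH}(\fra)=m\,c^J_{\KH}(h)$. The same argument with $B=R/J$ gives the statement for $c_{\Hir}$, under the hypothesis $m>c^J_{\Hir}(\fra)$. Finiteness of $c$ is part of the hypothesis $m>c$.
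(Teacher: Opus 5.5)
Your proposal is correct and follows essentially the same route as the paper: successive general choices via \cite[Theorem 10.1]{LyuMurayama.RelativeMMPForExcellent} give $\Div_Z(h)=mD+H_1+\dots+H_m$ with snc support, then the floor identity for $t<m$, then a comparison of the two sets defining the thresholds. The paper compresses that last comparison to ``the result follows,'' and you spell it out correctly using that the infimum is attained; the one detail you leave implicit is that the Bertini statement gives an open dense set of coefficients in $k^r$ (with $k$ the residue field), and $\bQ$-coefficients are available because $\bQ^r$ is Zariski dense in $k^r$.
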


\begin{proof}
    If $r = 1$, there is nothing to do so we may assume $r \geq 2$.
    It quickly follows from \cite[Theorem 10.1]{LyuMurayama.RelativeMMPForExcellent} (also see \cite[Corollary 10.3]{LyuMurayama.RelativeMMPForExcellent} and \cite[Lemma 2.12]{McDonald.MutliplierIdealsViaDerivedSplittings})
    that there exists an open dense set $W_1 \subseteq k^r$ such that for any $(\overline{a_1}, \dots, \overline{a_r}) \in W_1$, if we set $h_1 = a_1 f_1 + \dots + a_r f_r$, $Z$ is a log resolution of $(R, h_1)$ and in particular $\Div_Z h_1 = D + H_1$ has simple normal-crossing support where $H_1$ is nonsingular and has no common components with $D$.   As ${\bQ}^r \subseteq k^r$ is Zariski-dense, we may choose the coefficients $a_i$ from $\bQ$ and thus fix such a $h_1$ and corresponding $H_1$.
    Then there exists an open dense set $W_2 \subseteq k^r$ such that for any $(\overline{a_1}, \dots, \overline{a_r}) \in W_2$ if we set $h_2 = a_1 f_1 + \dots + a_r f_r$ then $Z$ is a log resolution of $(R, h_1h_2)$ and $\Div_Z (h_1 h_2) = 2D + H_1 + H_2$ has simple normal-crossing support and where $H_2$ is nonsingular  and has no common component with $D$ or $H_1$.  Arguing as above, we may again assume that the coefficients of $h_2$ are in $\bQ$.  Continuing in this way, we may construct $h_1, \dots, h_m$ and set $h = \prod_{i = 1}^m h_i$ so that $Z$ is a log resolution of $(R, h)$ and in particular $\Div_Z(h) = mD + H_1 + \dots + H_m$ is a simple normal crossing divisor with the $H_i$ reduced and with no common components with each other or $D$.  It follows that if $m > t \geq 0$ then $\lfloor tD\rfloor =  \lfloor {t \over m} \Div_Z(h)\rfloor$ as desired.
    Therefore $\myR \Gamma( \cO_Z(\lfloor tD\rfloor)) =  \myR \Gamma (\cO_Z(\lfloor {t \over m} \Div_Z(h)\rfloor))$.  The result follows.
\end{proof}

\begin{remark}
    If instead of being local, one assumes that $R$ is essentially finite type over a field $k$, then for successively chosen general $k$-linear (or $\bQ$-linear) combinations $h_1, \dots, h_m$ of the $f_i$, the same result holds by the argument of \cite[Proposition 9.2.28]{LazarsfeldPositivity2}.
\end{remark}

\begin{lemma}
    \label{lem.UpperBound}
    Suppose $J = (x_1, \dots, x_n)$ where $R$ is a Noetherian excellent domain of equal characteristic zero with a dualizing complex and $\fra \subseteq R$ is a nonzero ideal such that $\fra^l \subseteq J$ for some integer $l > 0$.  Pick $Z \to \Spec R$ a log resolution of $(R, \fra)$ and write $\fra \cO_Z = \cO_Z(-D)$.  Then we have that 
    \[
        R \to \Kos_{\bullet}(\underline{x}) \otimes^{\myL} \myR\Gamma(\cO_Z(\lfloor (nl) D \rfloor))
    \]
    is zero in the derived category.  As a consequence, $c^J_{\Hir}(\fra), c^J_{\KH}(\fra) \leq nl$ and in particular, both are finite numbers.  

    Finally, if $J$ is proper, both are positive.
\end{lemma}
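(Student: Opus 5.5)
The plan is to prove the vanishing already at the sheaf level on $Z$, before taking global sections. I expect this to give even the stronger statement with $l$ in place of $nl$. Write $\Kos_{\bullet}(\underline{x}; \cO_Z) = \Kos_{\bullet}(\underline{x}) \otimes_R \cO_Z$ and set $C = \Kos_{\bullet}(\underline{x}; \cO_Z(lD))$. This is a bounded complex of locally free sheaves in homological degrees $0, \dots, n$, and $\mathcal{H}_0(C) = \cO_Z(lD)/J\cO_Z(lD)$. Since $\fra^l \subseteq J$, we have $\cO_Z(-lD) = \fra^l\cO_Z \subseteq J\cO_Z$. Twisting by $\cO_Z(lD)$ gives $\cO_Z \subseteq J \cO_Z(lD)$. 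Hence the natural map of complexes $\cO_Z \to C$ (induced by $\cO_Z \subseteq \cO_Z(lD)$ in degree $0$) induces the zero map $\cO_Z \to \mathcal{H}_0(C)$.

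Next we show this map $\cO_Z \to C$ is zero in $D(Z)$. Consider the truncation triangle $\tau_{\geq 1} C \to C \to \mathcal{H}_0(C) \xrightarrow{+1}$, where $\tau_{\geq 1}C$ is the part in homological degrees $\geq 1$ (cohomological degrees $\leq -1$). Since the composite $\cO_Z \to C \to \mathcal{H}_0(C)$ is zero, the map lifts to $\cO_Z \to \tau_{\geq 1}C$. But $\Hom_{D(Z)}(\cO_Z, \tau_{\geq 1}C) = \mathbb{H}^0(Z, \tau_{\geq 1}C)$. This vanishes because $\tau_{\geq 1}C$ is concentrated in negative cohomological degrees, so the hypercohomology spectral sequence has no terms in total degree $0$. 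Thus $\cO_Z \to C$ is zero.

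Now apply $\myR\Gamma$. The projection formula gives $\myR\Gamma(C) \cong \Kos_{\bullet}(\underline{x}) \otimes^{\myL}_R \myR\Gamma(\cO_Z(lD))$, since $\Kos_{\bullet}(\underline{x})$ is a bounded complex of finite free $R$-modules. So $R \to \myR\Gamma(\cO_Z) \to \Kos_{\bullet}(\underline{x}) \otimes^{\myL} \myR\Gamma(\cO_Z(lD))$ is zero. Composing with the map induced by $\cO_Z(lD) \subseteq \cO_Z(nlD)$ (note $\lfloor nl D\rfloor = nlD$ as $nl$ is an integer) yields the displayed vanishing. This gives $c^J_{\KH}(\fra) \leq nl$. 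Then $c^J_{\Hir}(\fra) \leq c^J_{\KH}(\fra)$ by \autoref{prop.InitialProperties}, so both thresholds are finite. If the argument somehow needs the full $nl$, the fallback is the same filtration argument iterated $n$ times: each inclusion $\cO_Z(jlD) \subseteq \cO_Z((j+1)lD)$ is locally multiplication by an element of $J\cO_Z$, so it kills all homology sheaves. After $n+1$ steps the map from a degree-$0$ object is zero by amplitude. That version is robust but gives a slightly worse constant. The main point to verify carefully is the vanishing of $\Hom_{D(Z)}(\cO_Z, \tau_{\geq 1}C)$, which is where the homological-versus-cohomological indexing must be right.

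For positivity when $J$ is proper, we use \autoref{lem.InfsAreMins}, which applies now that finiteness is known. If $c^J_{\Hir}(\fra) = 0$, the map $R \to R/J \otimes^{\myL} \myR\Gamma(\cO_Z)$ would be zero. Taking $H_0$, this forces $1 \in J^{\Hir} \subseteq \overline{J}$ by \autoref{lem.KHHirBasicProperties}. That is impossible since $\overline{J} \subseteq \sqrt{J} \neq R$. Hence $c^J_{\Hir}(\fra) > 0$, and then $c^J_{\KH}(\fra) \geq c^J_{\Hir}(\fra) > 0$.
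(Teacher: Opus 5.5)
There is a genuine gap at the step you flagged. The lift $\cO_Z \to \tau_{\geq 1}C$ is fine. But $\Hom_{D(Z)}(\cO_Z, \tau_{\geq 1}C) = \mathbb{H}^0(Z, \tau_{\geq 1}C)$ need not vanish. In the spectral sequence $H^p(Z, \mathcal{H}^q) \Rightarrow \mathbb{H}^{p+q}$, the terms $H^i(Z, \mathcal{H}_i(C))$ with $i \geq 1$ lie in total degree $0$, and $Z$ is proper, not affine, over $\Spec R$. The $t$-structure kills maps from $D^{\leq 0}$ to $D^{\geq 1}$, not maps to $D^{\leq -1}$.

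In fact your strengthened claim with $l$ in place of $nl$ is false. For $\fra = J$ a parameter ideal and $l = 1$ it would give $c^J_{\KH}(J) \leq 1$, contradicting $c^J_{\KH}(J) = n$ from \autoref{lem.ParameterDiagonal}. Concretely, take $R = k[x,y]_{(x,y)}$, $J = \fra = (x,y)$, and $Z$ the blowup with exceptional curve $E$. Then $\myR\Gamma(\cO_Z(E)) \cong R$, so $1 \mapsto 1 \neq 0$ in $H_0 = R/(x,y)$. On $Z$ the obstruction is exactly $H^1(E, \mathcal{H}_1(C)) = H^1(\mathbb{P}^1, \cO(-2)) \neq 0$.

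Your fallback, as described, uses $n+1$ ghost steps against the $n+1$ possible homology degrees of the Koszul complex, so it only gives $(n+1)l$. That suffices for finiteness but not for the stated bound, and $c^J_{\KH}(J) \leq n$ is exactly what \autoref{lem.ParameterDiagonal} uses.

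With one more observation the fallback gives exactly $nl$, and then it is a correct proof by a different route. Put $C_k = \Kos_{\bullet}(\underline{x}; \cO_Z(klD))$. Since $Z$ is integral and $J \neq 0$, $\mathcal{H}_n(C_k) = (0 :_{\cO_Z(klD)} J) = 0$, so $C_k$ has homology only in degrees $0, \dots, n-1$. The composite $\cO_Z \to C_1$ is zero on $\mathcal{H}_0$ by your computation. Each $C_k \to C_{k+1}$ is zero on all homology sheaves, since locally it is multiplication by a generator of $\fra^l\cO_Z \subseteq J\cO_Z$. Now use that $\Hom_{D(Z)}(A, B) = \Hom(\mathcal{H}_k A, \mathcal{H}_k B)$ whenever $A$ has homology in degrees $\geq k$ and $B$ in degrees $\leq k$. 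By induction, $\cO_Z \to C_k$ factors through $\tau_{\geq k} C_k$, which is $0$ for $k = n$. Only the $t$-structure is used, never cohomology on $Z$.

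The paper argues differently. It localizes and replaces $\fra$ by a product $h$ of $m > nl$ general elements. It then passes to an alteration $Y$ such that $S = \Gamma(Y, \cO_Y)$ contains $g$ with $\Div(g) = \frac{nl}{m}\Div(h)$. It concludes from $g \in \overline{J^n S} \subseteq (JS)^{\KH}$, the Brian\c{c}on--Skoda theorem for KH closure \cite[Theorem 4.14]{EpsteinMcDonaldRGSchwede}. The sharpened ghost-map argument avoids general elements, alterations and that theorem, in effect reproving the case needed. There the factor $n$ appears as the homological amplitude of the Koszul complex on an integral scheme.

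Your positivity argument, via \autoref{lem.InfsAreMins} and $1 \in J^{\Hir} \subseteq \overline{J} \subseteq \sqrt{J}$, is fine.
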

\begin{proof}
    For the first statement, we may assume that $R$ is local as the displayed map is zero if and only if all its localizations are zero.    
    If $J = R$, then $c^J_{\Hir}(\fra) = c^J_{\KH}(\fra) = 0$, and hence there is nothing to show for the first statement.  Thus we may assume that $J$ is proper.
    By \autoref{prop.InitialProperties} \autoref{prop.InitialProperties.ComparisonOfTwoThresholds}, it suffices to prove the $\KH$ case.  
    Pick an integer $m > nl$ and choose $h$ as in \autoref{prop.GeneralElementComparisonLocal}.  Using that lemma, we must show that 
    \[
        R \to \Kos_{\bullet}(\underline{x})  \otimes^{\myL} \myR\Gamma(\cO_Z(\lfloor {nl \over m} \Div_Z(h) \rfloor))
    \]  
    is zero.  Pick a log alteration $Y$ of $(R, h)$, dominating $Z$, $Y \xrightarrow{\nu} Z \to \Spec R$ such that if $W = \Spec S = \Spec \Gamma(Y, \cO_Y)$, then ${nl \over m} \Div_W(h)$ is principal and equal to $\Div_W(g)$.  
    By \autoref{lem.WeCanUseLogAlterationInstead}, it suffices to show that 
    \[
        R \to \Kos_{\bullet}(\underline{x}; R)  \otimes^{\myL}_R \myR\Gamma(\cO_Y(\lfloor {nl \over m} \Div_Y(h) \rfloor))
    \]
    is zero.  Or equivalently, it suffices to show that $R \xrightarrow{\cdot g} \Kos_{\bullet}(\underline{x}; S) \otimes^{\myL}_S \myR\Gamma(\cO_Y)$ is zero.  
    By construction, $h \in \fra^{m}$ and so $g \in \overline{\fra^{nl} S} \subseteq \overline{J^n S}$ where $\overline{-}$ denotes integral closure.  But by \cite[Theorem 4.14]{EpsteinMcDonaldRGSchwede} we see that $\overline{J^n S} = \overline{(JS)^n} \subseteq (JS)^{\KH}$.  In other words, we see that $S \xrightarrow{\cdot g} \Kos(\underline{x}; S) \otimes_S^{\myL} \myR\Gamma(\cO_Y)$ is zero.  This implies the first statement.  The ``consequence'' follows immediately from the first statement. 

    To show positivity, by \autoref{lem.Localization} we may assume that $(R, \fram)$ is local and that $J \subseteq \fram$.  Suppose that $c^J_{\Hir}(\fra)  = 0$ and hence
    \[
        R \to R/J \otimes^{\myL} \myR\Gamma(\cO_Z(\lfloor tD \rfloor))
    \]
    is zero for sufficiently small $t > 0$.  But $\lfloor tD \rfloor = 0$ for such sufficiently small $t > 0$ and so $R \to R/J \otimes^{\myL} \myR\Gamma(\cO_Z)$ is zero.
    This implies that $1 \in J^{\Hir}$ which, since $J \subseteq \fram$,  contradicts the fact that Hironaka pre-closure is faithful (\cite[Proposition 6.2]{EpsteinMcDonaldRGSchwede}).  As $c^J_{\KH}(\fra) \geq c^J_{\Hir}(\fra)$, the positivity of $c^J_{\KH}(\fra)$ also follows.
\end{proof}

\begin{corollary}
    \label{cor.GeneralElementChoiceNew}
    Suppose that $R$ is a Noetherian excellent domain of equal characteristic zero with a dualizing complex and $\fra, J \subseteq R$ are nonzero ideals with $\fra \subseteq \sqrt{J}$ and where $\fra = (f_1, \dots, f_r)$.  Fix an integer $m > c^J_{\KH}(\fra)$.  We may (successively) pick general $\bQ$-linear combinations $h_1, \dots, h_m$ of the $f_i$ such that if $h = \prod_{i = 1}^m h_i$, then 
    \[
        c^J_{\KH}(\fra) = m c^J_{\KH}(h).
    \]
    Likewise if $m > c^J_{\Hir}(\fra)$, then we may pick $h$ as above such that 
    \[
        c^J_{\Hir}(\fra) = m c^J_{\Hir}(h).
    \]
\end{corollary}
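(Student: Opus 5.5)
We globalize \autoref{prop.GeneralElementComparisonLocal}. There are two things to do: choose the $h_i$ so that the divisor identity $\lfloor tD \rfloor = \lfloor \tfrac{t}{m}\Div_Z(h)\rfloor$ holds on all of $Z$ rather than over a single local ring, and then recover the global thresholds from local data.

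Fix a log resolution $\pi : Z \to \Spec R$ of $(R,\fra)$ with $\fra\cO_Z = \cO_Z(-D)$. The goal is to pick $h_1,\dots,h_m \in \bQ f_1+\dots+\bQ f_r$ such that $\Div_Z(h) = mD + H_1 + \dots + H_m$, where the $H_i$ are reduced, have no common components with each other or with $\Supp D$, and $mD + \sum H_i$ has simple normal-crossing support. Once this holds, for every $0 \le t < m$ we get $\lfloor tD\rfloor = \lfloor \tfrac{t}{m}\Div_Z(h)\rfloor$. Hence $\myR\Gamma(\cO_Z(\lfloor tD\rfloor)) = \myR\Gamma(\cO_Z(\lfloor \tfrac tm \Div_Z(h)\rfloor))$ for those $t$. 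Since $Z$ is also a log resolution of $(R,h)$, we have $c^J_{\KH}(h) = \min\{s : \text{the map for } \lfloor s\Div_Z(h)\rfloor \text{ is zero}\}$. With $s = t/m$ and $m > c^J_{\KH}(\fra)$, both minima (\autoref{lem.InfsAreMins}) occur in the range $t<m$. The defining sets therefore agree there, and $c^J_{\KH}(\fra) = m\,c^J_{\KH}(h)$. Note that for $t$ slightly less than $m$ the identity still holds, and $c^J_{\KH}(h)\le$ the value from $\fra$ forces the minimum of the $h$-set to lie below $1$. The $\Hir$ case is word-for-word the same.

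The substantive step is the global genericity. I would carry it out by induction on $i$, as in the local proof. The inclusion $\fra\cO_Z = \cO_Z(-D)$ means $\cO_Z(-D)$ is globally generated by $f_1,\dots,f_r$, so the sections $f_j$ define a base-point-free linear system on $Z$ over $R$. A Bertini-type theorem is needed: for a general member $(a_j) \in \bQ^r$, the residual divisor $H = \Div_Z(\sum a_j f_j) - D$ is regular and crosses the given snc divisor $\Supp(D + H_1 + \dots + H_{i-1}) \cup \mathrm{Exc}(\pi)$ transversally. For excellent schemes this is what \cite[Theorem 10.1, Corollary 10.3]{LyuMurayama.RelativeMMPForExcellent} provides. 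The local proof only used it over the closed point, with residue field $k$.

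The main obstacle is that in the non-local case there is no single residue field, so ``general'' must be made meaningful. I would argue by noetherian induction on closed points of $\Spec R$. The bad locus for a given choice of coefficients is a closed subset of $Z$ whose image in $\Spec R$ is closed. For a fixed coefficient vector, the good condition at $\fram$ is open in $\fram$: if it holds at $\fram$ it holds on a neighborhood. So one alternative is to cover $\mSpec R \cap V(J)$ by finitely many opens, each with its own good choice, and patch these choices rather than use one global general element. Patching is not actually needed, though. By \autoref{lem.Localization}, $c^J_{\KH}(\fra) = \max_\fram c^{J_\fram}_{\KH}(\fra_\fram)$, and the same formula holds for $h$. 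So it suffices to have one $h$ that is good at every maximal ideal containing $J$; at maximal ideals not containing $J$ both local thresholds are $0$.

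The remaining difficulty is to ensure that a single $\bQ$-vector avoids the bad sets for all $\fram$ simultaneously. I would try to show that the bad coefficients form a proper closed subset of $\bA^r_{\bQ}$: apply Bertini over the generic points, and use constructibility of the bad locus in $\bA^r_R$ together with its properness over $\Spec R$ to show that its image in $\bA^r_\bQ$ is not dense. A $\bQ$-point in the complement then exists. If this fails in full generality, the fallback is to take $Z$ fixed and, at each successive step, pick $h_i$ avoiding the finitely many generic points of the relevant strata, which suffices for the transversality needed in the snc condition.
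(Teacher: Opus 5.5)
Your proposal has a genuine gap, and it sits exactly where you flag it. You reduce the problem to finding a single vector of $\bQ$-coefficients that is generic simultaneously at every maximal ideal containing $J$ (or on all of $Z$). That is a global Bertini statement over an arbitrary non-local excellent base, and you never prove it.

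The local Bertini input in \autoref{prop.GeneralElementComparisonLocal} works only because there is one residue field $k$, in which $\bQ^r$ is Zariski-dense. Your sketch for the global version does not go through:
\begin{itemize}
\item The map $\bA^r_R \to \bA^r_{\bQ}$ is not of finite type, so Chevalley's theorem says nothing about the image of the bad locus.
\item Properness over $\Spec R$ does not control that image either.
\item A union of infinitely many proper closed bad sets, one per maximal ideal and defined over varying residue fields, can exhaust the countable set $\bQ^r$.
\end{itemize}
Your fallback of avoiding generic points of finitely many strata controls only codimension-one behaviour generically. It does not give reducedness of the $H_i$ at every codimension-one point, nor the snc condition at closed points, and you need that condition for $Z$ to compute $\myR\Gamma$ for $h$.

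None of this is needed; the missing idea is that genericity at \emph{one} maximal ideal suffices. Choose $\frq \in \mSpec R$ with $c^J_{\KH}(\fra) = c^{J_\frq}_{\KH}(\fra_\frq)$ using \autoref{lem.Localization}. Apply \autoref{prop.GeneralElementComparisonLocal} in $R_\frq$ only. Its $h_i$ are $\bQ$-combinations of the $f_i$, so $h \in R$. Then
\[
m\,c^J_{\KH}(h) \leq m\,c^J_{\KH}(\fra^m) = c^J_{\KH}(\fra) = c^{J_\frq}_{\KH}(\fra_\frq) = m\,c^{J_\frq}_{\KH}(h) \leq m\,c^J_{\KH}(h).
\]
The steps are justified as follows:
\begin{itemize}
\item The first inequality is monotonicity, since $h \in \fra^m$ (\autoref{prop.InitialProperties} \autoref{prop.InitialProperties.Biggera}).
\item The first equality is \autoref{prop.InitialProperties} \autoref{prop.InitialProperties.PowersOfa}.
\item The last inequality is \autoref{lem.Localization} applied to $h$.
\end{itemize}

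You actually observed the global upper bound $c^J_{\KH}(h) \leq c^J_{\KH}(\fra)/m$ in your first paragraph. Combined with the maximum formula, it shows that ``general'' in the statement only needs to mean general relative to $R_\frq$. The $\Hir$ case is identical.
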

\begin{proof}
    Pick $\frq \in \mSpec R$ such that $c^J_{\KH}(\fra) = c^{J_{\frq}}_{\KH}(\fra_{\frq})$ as in \autoref{lem.Localization}.  Then, in $R_{\frq}$, we may pick $h$ as described such that $c_{\KH}^{J_{\frq}}(\fra_{\frq}) = m c_{\KH}^{J_{\frq}}(h)$ thanks to \autoref{prop.GeneralElementComparisonLocal}.  Combining this, the fact that $h \in \fra^m$, and \autoref{prop.InitialProperties} \autoref{prop.InitialProperties.PowersOfa} \autoref{prop.InitialProperties.Biggera} we obtain 
    \[
        m c_{\KH}^{J}(h) \leq m c_{\KH}^J(\fra^m) = c_{\KH}^J(\fra) = c_{\KH}^{J_{\frq}}(\fra_{\frq}) = m c_{\KH}^{J_{\frq}}(h).
    \]
    But $m c_{\KH}^{J_{\frq}}(h) \leq m c_{\KH}^{J}(h)$ thanks to \autoref{lem.Localization} and so all the terms are equal proving the first case.  The case for $c^J_{\Hir}(\fra)$ is identical.
\end{proof}

\begin{corollary}
    \label{cor.AlternateWayOfPrincipalizing}
    Using the notation of \autoref{def.NonPrincipalDerivedThreshold}, fix $B$ to be either  $R/J$ or $\Kos_{\bullet}(\underline{x})$ where $J = (\underline{x})$.  
    Then, adopting the terminology of \autoref{cor.GeneralElementChoiceNew}, if $t \in \bQ_{\geq 0}$ is such that $m > t$, then  
    \[
        R \to B \otimes^{\myL} \myR\Gamma(\cO_Z(\lfloor tD \rfloor))
    \]
    is zero if and only if 
    \[
        R \to B \otimes^{\myL} \myR\Gamma(\cO_{Z'}(\lfloor {t \over m} \Div_{Z'} h \rfloor))
    \]
    is zero where $Z' \to \Spec R$ is a log resolution of $(R, h)$.
\end{corollary}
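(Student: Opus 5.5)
The plan is to reduce both vanishing statements to a single common model, and then compare the two rounded divisors on that model. For the comparison, use the inequality in the direction that is automatic, and use genericity of $h$ for the reverse direction.

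\emph{Step 1: a common model.} Let $Z''\to \Spec R$ be a log resolution of $(R,\fra\cdot h)$ that dominates both $Z$ and $Z'$. Write $\fra\cO_{Z''}=\cO_{Z''}(-D'')$. By \autoref{lem.WeCanUseLogAlterationInstead}, applied once to the ideal $\fra$ and once to the ideal $(h)$, the vanishing of $R\to B\otimes^{\myL}\myR\Gamma(\cO_Z(\lfloor tD\rfloor))$ is equivalent to the same vanishing on $Z''$ with $\lfloor tD''\rfloor$. Similarly, the vanishing for $Z'$ with $\lfloor \tfrac tm\Div_{Z'}h\rfloor$ is equivalent to the vanishing on $Z''$ with $\lfloor \tfrac tm \Div_{Z''}h\rfloor$. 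So everything takes place on $Z''$.

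\emph{Step 2: the easy direction.} Since $h\in\fra^m$, we have $\Div_{Z''}h\ge mD''$. Hence $\lfloor \tfrac tm\Div_{Z''}h\rfloor\ge\lfloor tD''\rfloor$, and the map $R\to B\otimes^{\myL}\myR\Gamma(\cO_{Z''}(\lfloor \tfrac tm\Div h\rfloor))$ factors through the corresponding map for $\lfloor tD''\rfloor$. Therefore vanishing for $\fra$ at level $t$ implies vanishing for $h$ at level $t/m$.

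\emph{Step 3: the converse.} I would show that for the generic choice of the $h_i$ the two rounded divisors actually coincide when $t<m$. Concretely, the aim is that on $Z$ we have $\Div_Z h=mD+H_1+\dots+H_m$ with the $H_i$ reduced, having no components in common with $D$ or with each other, and with $mD+\sum H_i$ of simple normal-crossing support. In that case $Z$ is itself a log resolution of $(R,h)$ and $\lfloor \tfrac tm \Div_Z h\rfloor=\lfloor tD\rfloor$ because $t/m<1$. Then we may take $Z''=Z$ in Step~1, and the two conditions are literally the same.

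The main obstacle is making this genericity \emph{global}, whereas \autoref{prop.GeneralElementComparisonLocal} is local. I would first invoke \cite[Theorem 10.1]{LyuMurayama.RelativeMMPForExcellent} in its global form. The linear system spanned by the $f_i$ on $Z$ is base-point free, since $\fra\cO_Z=\cO_Z(-D)$ is generated by the $f_i$. The required conditions are finitely many, namely smoothness and transversality of $H_i$ along the finitely many strata of the simple normal-crossing divisor $D+H_1+\dots+H_{i-1}$ together with the exceptional locus. Each condition holds on a dense open set of coefficients, and $\bQ^r$ is dense, so rational choices of the $h_i$ made in succession are possible.

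If only a local statement is available, I would fall back on the following argument. Vanishing of $R\to C$ for $C$ with coherent cohomology is the vanishing of a class in $H^0(C)$, and the support of that class is closed. So vanishing can be tested at the finitely many generic points of the relevant supports, reducing to a semi-local ring where the local argument applies with finitely many genericity conditions. Combining Steps 1–3 gives the equivalence.
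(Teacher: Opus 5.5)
Your Steps 1 and 2 are fine: Step 2 correctly shows that vanishing for $\fra$ at level $t$ implies vanishing for $h$ at level $t/m$. The gap is in Step 3.

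\textbf{The divisor equality fails for the given $h$.} The $h$ in the statement is the one supplied by \autoref{cor.GeneralElementChoiceNew}. In this generality that $h$ is only general in the local ring $R_{\frq}$, where $\frq$ is a maximal ideal at which the threshold is attained. \autoref{prop.GeneralElementComparisonLocal} is local, and the paper asserts a global version only when $R$ is essentially of finite type over a field. Away from $\frq$, $\Div_Z h$ may have non-reduced components or components shared with $D$, so the identity $\lfloor tD\rfloor=\lfloor \frac tm \Div_Z h\rfloor$ you aim for can fail. For example, take $R=\bQ[x]$, $J=(x)$ and $\fra=(x,\,x^3-2x^2)$. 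Then $D=\Div(x)$, $c^J_{\KH}(\fra)=c^J_{\Hir}(\fra)=1$, $\frq=(x)$ and $m=2$. The choice $h_1=h_2=x+(x^3-2x^2)=x(x-1)^2$ meets every requirement of \autoref{prop.GeneralElementComparisonLocal} over $R_{(x)}$, and it satisfies $c^J(\fra)=2c^J(h)$. Yet for $t=1/2$ we get $\lfloor tD\rfloor=0$ while $\lfloor \frac t2\Div h\rfloor=\Div(x-1)$.

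\textbf{The global Bertini step is not justified, and the fallback changes $h$.} At best Step 3 proves the corollary for some other, more special $h$, and even the existence of a globally general $h$ is not established here. Your count of ``finitely many strata'' does not by itself control the bad coefficients. Over a base that is neither local nor of finite type, each stratum has infinitely many closed points with varying residue fields. Without a Chevalley-type constructibility argument, nothing places the union of the bad loci inside a proper closed subset of $\mathbb{A}^r_{\bQ}$. Your fallback has the same problem: it imposes genericity at the minimal primes of the supports of the relevant classes. Those primes depend on $t$ and on $B$, so it again re-chooses $h$.

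\textbf{The missing idea.} \autoref{cor.GeneralElementChoiceNew} already gives you $c^J(\fra)=m\,c^J(h)$ for the relevant threshold: Hironaka for $B=R/J$, Koszul--Hironaka for $B=\Kos_{\bullet}(\underline{x})$. The set of $t$ for which $R\to B\otimes^{\myL}\myR\Gamma(\cO_Z(\lfloor tD\rfloor))$ is zero is upward closed, because for $t\le t'$ the map at level $t'$ factors through the map at level $t$. It also contains its infimum by \autoref{lem.InfsAreMins}. So this set is exactly $[c^J(\fra),\infty)$. Likewise, by definition and \autoref{lem.InfsAreMins}, the vanishing set for $h$, computed on any log resolution $Z'$ of $(R,h)$, is $[c^J(h),\infty)$. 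Hence both conditions are equivalent to $t\ge c^J(\fra)=m\,c^J(h)$. This is the paper's one-line proof; it needs no Bertini input and not even the hypothesis $m>t$.

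\textbf{How to repair your localization idea.} Localize only at the maximal ideal $\frq$ from \autoref{cor.GeneralElementChoiceNew}. For $t<c^J(\fra)=c^{J_{\frq}}(\fra_{\frq})$, the class is nonzero at $\frq$. There $h$ is general, so $t<m$ makes the two rounded divisors agree over $R_{\frq}$, and \autoref{lem.WeCanUseLogAlterationInstead} transfers the non-vanishing to $Z'$.
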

\begin{proof}
    This is an immediate corollary of \autoref{cor.GeneralElementChoiceNew} and the definitions of $c^J_{\Hir}(\fra)$ and $c^J_{\KH}(\fra)$.
\end{proof}


We note the following well-known lemma.

\begin{lemma}
    \label{lem.ZeroVSCohomologicalZero}
    Suppose $R$ is a ring and $M^{\mydot} \in D(R)$.  Then a map $f : R \to M^{\mydot}$ is zero in the derived category if and only if 
    \[
        H_0(f) : R \to H_0(M^{\mydot}) 
    \]
    is zero.  
\end{lemma}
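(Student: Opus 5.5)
One direction is immediate: if $f$ is zero in $D(R)$, then $H_0(f)$ is zero, because $H_0$ is a functor on the derived category.

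For the converse, the plan is to use that $R$, viewed as a complex concentrated in degree $0$, is a free (hence K-projective) module. Maps out of it are therefore computed by homotopy classes of actual chain maps:
\[
    \Hom_{D(R)}(R, M^{\mydot}) \cong \Hom_{K(R)}(R, M^{\mydot}).
\]
Represent $M^{\mydot}$ by any complex; here $M$ carries homological indexing, matching the use of $H_0$. A chain map $R \to M^{\mydot}$ is the same as an element $m = f(1) \in M_0$ that is a cycle, i.e.\ $d(m) = 0$. It is nullhomotopic exactly when $m = d(n)$ for some $n \in M_1$, which says the class of $m$ in $H_0(M^{\mydot})$ vanishes. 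This yields the standard identification
\[
    \Hom_{D(R)}(R, M^{\mydot}) \cong H_0(M^{\mydot}),
\]
under which $f$ corresponds to $H_0(f)(1)$. Alternatively, one can cite the formula $\Hom_{D(R)}(R, M^{\mydot}) = H^0 \myR\Hom_R(R, M^{\mydot}) = H^0(M^{\mydot})$, as in the Stacks reference used in \autoref{lem.DualOfMultiplierIdeal}.

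Now suppose $H_0(f) = 0$. Then $H_0(f)(1) = 0$, so the corresponding element of $\Hom_{D(R)}(R, M^{\mydot})$ is zero, i.e.\ $f = 0$. Since $H_0(f)$ is $R$-linear, it vanishes if and only if $H_0(f)(1) = 0$, so the two conditions agree.

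There is no real obstacle here. The only point to check is that, because $R$ is K-projective, morphisms in $D(R)$ out of $R$ need no roof or localization: they really are homotopy classes of chain maps, and this is what makes the computation above valid.
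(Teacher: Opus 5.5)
Your proof is correct and follows the same route as the paper. The paper simply cites the identification $\Hom_{D(R)}(R[0], M^{\mydot}) \cong H_0(M^{\mydot})$, sending $f$ to $H_0(f)(1)$; you additionally justify that identification using the K-projectivity of $R$ and the explicit description of chain maps and nullhomotopies out of $R$.
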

\begin{proof}
    This follows from the identification $\Hom_{D(R)}(R[0], M^{\mydot}) \cong H_0(M^{\mydot})$ as $f$ is sent to $(H_0(f))(1) \in H_0(M^{\mydot})$.   
\end{proof}

As a corollary we obtain the following.

\begin{corollary}
    With notation as in \autoref{def.NonPrincipalDerivedThreshold}, we have that 
    \[
        c_{\Hir}^J(\fra) = \inf\{ t \geq 0 \;|\; R \to R/J \otimes_{R}^{\myL} \myR \Gamma( \cO_Z(\lfloor t D \rfloor)) \text{ is zero on cohomology }\}.
    \]
    Furthermore, we have that 
    \[
        c_{\KH}^{J}(\fra) = \inf\{ t \geq 0 \;|\; R \to \Kos_{\bullet}(\underline{x}) \otimes_{R}^{\myL} \myR \Gamma( \cO_Z(\lfloor t D \rfloor)) \text{ is zero on cohomology }\}.
    \]
\end{corollary}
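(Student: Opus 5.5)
The plan is to apply \autoref{lem.ZeroVSCohomologicalZero} directly to each $t$ appearing in the definition of the two thresholds. For a fixed $t \geq 0$, set $M^{\mydot} = B \otimes^{\myL}_R \myR\Gamma(\cO_Z(\lfloor tD \rfloor))$, where $B$ is either $R/J$ or $\Kos_{\bullet}(\underline{x})$. The map $R \to M^{\mydot}$ in \autoref{def.NonPrincipalDerivedThreshold} is a morphism $R[0] \to M^{\mydot}$ in $D(R)$. By \autoref{lem.ZeroVSCohomologicalZero}, it is zero in $D(R)$ if and only if the induced map $R \to H_0(M^{\mydot})$ is zero.

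Next I have to interpret ``zero on cohomology'' as vanishing on every cohomology module $H_i$, and check that this is equivalent to vanishing on $H_0$. A map from $R[0]$ induces maps $H_i(R[0]) \to H_i(M^{\mydot})$, and $H_i(R[0]) = 0$ for $i \neq 0$. So the maps in every degree other than $0$ vanish automatically. Hence being zero on cohomology is the same as $H_0(f) = 0$, which by the previous paragraph is the same as $f = 0$ in $D(R)$.

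It follows that for $B = R/J$, and separately for $B = \Kos_{\bullet}(\underline{x})$, the set of $t \geq 0$ over which each infimum is taken is the same in the two descriptions. The infima therefore agree, which gives both displayed equalities. The independence of the choice of generators from \autoref{lem.KHThresholdIndependentOfGenerators} shows that the $\KH$ statement does not depend on which generators $\underline{x}$ of $J$ are used.

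The only point that needs care is the indexing convention: the paper writes homology $H_0$, so I must make sure $H_0$ really corresponds to the degree of $R[0]$. This is just a matter of convention, and I do not expect any real obstacle.
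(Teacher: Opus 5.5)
Your proposal is correct and follows the paper's proof, which simply applies \autoref{lem.ZeroVSCohomologicalZero} for each $t$, so the two sets over which the infima are taken coincide. Your remark that $H_i(R[0]) = 0$ for $i \neq 0$ spells out why ``zero on cohomology'' reduces to the $H_0$ condition, a step the paper leaves implicit.
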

\begin{proof}
    This follows immediately from \autoref{lem.ZeroVSCohomologicalZero}.
\end{proof}

\begin{remark}
    \label{rem.ComputingClosures}
    Building on the Macaulay2 (\cite{M2}) package included in the arXiv version of \cite{EpsteinMcDonaldRGSchwede}, one can compute $c_{\Hir}^J(\fra)$ and $c^J_{\KH}(\fra)$ in a Gorenstein ring as long as multiplier ideals $\mJ(R, \fra^t)$ are known as $t$ varies.  Note that one can obtain $\myR\Gamma(\cO_Z(\lfloor tD \rfloor))$ as the Grothendieck dual of $\mJ(R, \fra^t)$ and then one can use the functionality from the Complexes package \cite{ComplexesSource} to do the computation.
\end{remark}

\section{Finite extensions and connections with closure operations}

We first explore the behavior of these invariants under finite extensions.

\begin{proposition}
    \label{prop.BehaviorUnderFinite}
    Suppose that $R$ is a Noetherian excellent domain of equal characteristic zero with a dualizing complex.  Suppose further that $\fra, J = (x_1, \dots, x_n)$ are nonzero ideals with $\fra \subseteq \sqrt{J}$.  Suppose additionally that $R \subseteq R'$ is a finite inclusion of domains.  We write $J' = J R'$ and $\fra' = \fra R'$.
    
    Then $c^J_{\KH}(\fra) = c^{J'}_{\KH}(\fra')$ and $c^J_{\Hir}(\fra) \geq c^{J'}_{\Hir}(\fra')$.  
\end{proposition}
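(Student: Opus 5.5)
Fix a log resolution $Z' \to \Spec R'$ of $(R', \fra')$ and write $\fra' \cO_{Z'} = \cO_{Z'}(-D')$. The composite $Z' \to \Spec R' \to \Spec R$ is generically finite and proper, and $Z'$ is a nonsingular integral scheme on which $\fra\cO_{Z'} = \fra'\cO_{Z'} = \cO_{Z'}(-D')$ has simple normal-crossing support. So $Z'$ is a log alteration of $(R,\fra)$, and by \autoref{lem.WeCanUseLogAlterationInstead} we may compute both thresholds over $R$ using $Z'$ and $D'$.

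The key observation is base change of the Koszul complex: $\Kos_{\bullet}(\underline{x}; R) \otimes^{\myL}_R \myR\Gamma(\cO_{Z'}(\lfloor tD' \rfloor)) \cong \Kos_\bullet(\underline{x}; R') \otimes^{\myL}_{R'} \myR\Gamma(\cO_{Z'}(\lfloor tD' \rfloor))$. This holds because the Koszul complex is a bounded complex of free modules whose base change to $R'$ is the Koszul complex on the images of $\underline{x}$, which generate $J'$. The map from $R$ factors as $R \to R' \to (\cdot)$. By \autoref{lem.ZeroVSCohomologicalZero}, each map is zero iff the image of $1$ in $H_0$ vanishes, and $1$ maps to $1$. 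Hence $R\to \Kos(\underline{x};R)\otimes^{\myL}_R\myR\Gamma(\cO_{Z'}(\lfloor tD'\rfloor))$ is zero iff $R'\to\Kos(\underline{x};R')\otimes^{\myL}_{R'}\myR\Gamma(\cO_{Z'}(\lfloor tD'\rfloor))$ is zero, for every $t$. The two infimum sets coincide, and by \autoref{lem.KHThresholdIndependentOfGenerators} this gives $c^J_{\KH}(\fra)=c^{J'}_{\KH}(\fra')$.

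For the Hironaka threshold, the analogous identification fails: $R/J\otimes^{\myL}_R M \cong (R/J\otimes^{\myL}_R R')\otimes^{\myL}_{R'} M$ for $M\in D(R')$, but $R/J \otimes^{\myL}_R R'$ is not $R'/J'$ in general. There is only a natural map $R/J\otimes^{\myL}_R R' \to R'/J'$, its $H_0$-truncation. Since this map is compatible with the maps from $R'$, if $R \to R/J\otimes^{\myL}_R \myR\Gamma(\cO_{Z'}(\lfloor tD'\rfloor))$ is zero, then composing shows $R'\to R'/J'\otimes^{\myL}_{R'}\myR\Gamma(\cO_{Z'}(\lfloor tD'\rfloor))$ is zero, again checking on the image of $1$ in $H_0$ via \autoref{lem.ZeroVSCohomologicalZero}. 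Thus the set for $(R,J,\fra)$ is contained in the set for $(R',J',\fra')$, giving $c^J_{\Hir}(\fra)\ge c^{J'}_{\Hir}(\fra')$.

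The main point needing care is that \autoref{lem.WeCanUseLogAlterationInstead} is stated for alterations that factor through a log alteration. Here $Z'$ is itself a log alteration of $(R,\fra)$, so one may take $Y=Y'=Z'$. Note that $Z'\to\Spec R$ need not be birational, only generically finite and surjective, which is exactly what the log alteration notion allows. The rest is formal.
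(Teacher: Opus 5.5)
Your proposal is correct and follows essentially the same argument as the paper. It views a log resolution $Z'$ of $(R',\fra')$ as a log alteration of $(R,\fra)$, identifies the two Koszul tensor products via base change, and detects vanishing through the image of $1$ in $H_0$. In the Hironaka case it replaces that isomorphism with the natural map $R/J\otimes^{\myL}_R \myR\Gamma(\cO_{Z'}(\lfloor tD'\rfloor)) \to R'/J'\otimes^{\myL}_{R'} \myR\Gamma(\cO_{Z'}(\lfloor tD'\rfloor))$, as the paper does.
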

\begin{proof}
    Fix $\pi' : Z' \to X' = \Spec R'$ a log resolution of $(\Spec R', \fra')$, noting that it is also a log alteration of $(\Spec R, \fra)$.  Write $\fra' \cO_{Z'} = \fra \cO_{Z'} = \cO_{Z'}(-D')$.
    It follows from \autoref{lem.WeCanUseLogAlterationInstead} that  
    \[
        c^J_{\KH}(\fra) = \inf\{ t \geq 0 \;|\; R \to \Kos_{\bullet}(\underline{x}) \otimes_{R}^{\myL} \myR\Gamma( \cO_{Z'}(\lfloor tD' \rfloor)) \text{ is zero }\}.
    \]
    But, as the Koszul complex commutes with base change, we have
    \[
        \Kos_{\bullet}(\underline{x}) \otimes_{R}^{\myL} \myR\Gamma( \cO_{Z'}(\lfloor tD' \rfloor)) 
        \cong 
        \Kos_{\bullet}(\underline{x}; R') \otimes_{R'}^{\myL} \myR\Gamma( \cO_{Z'}(\lfloor tD' \rfloor)).
    \]
    Finally, 
    by \autoref{lem.ZeroVSCohomologicalZero} 
    we have that $R \to \Kos_{\bullet}(\underline{x}) \otimes_{R}^{\myL} \myR\Gamma( \cO_{Z'}(\lfloor tD' \rfloor)) $ is zero if and only if $1 \in R = H_0(R)$ is sent to $0 \in H_0\big(\Kos_{\bullet}(\underline{x}) \otimes_{R}^{\myL} \myR\Gamma( \cO_{Z'}(\lfloor tD' \rfloor)) \big)$.  But that is equivalent to asking that $R' \to \Kos_{\bullet}(\underline{x}; R') \otimes_{R'}^{\myL} \myR\Gamma( \cO_{Z'}(\lfloor tD' \rfloor))$ sends $1$ to zero on zeroth cohomology as well.  Applying \autoref{lem.ZeroVSCohomologicalZero}  again proves the statement for the derived Koszul-Hironaka threshold.
    
    The result for the derived Hironaka threshold is similar.  We simply use that we have a map
    \[
        R/J \otimes_{R}^{\myL} \myR\Gamma( \cO_{Z'}(\lfloor tD' \rfloor)) 
        \to 
        R'/J' \otimes_{R'}^{\myL} \myR\Gamma( \cO_{Z'}(\lfloor tD' \rfloor))
    \]
    instead of an isomorphism.  Thus if $R \to R/J \otimes_{R}^{\myL} \myR\Gamma( \cO_{Z'}(\lfloor tD' \rfloor))$ is zero, so is $R \to R'/J' \otimes_{R'}^{\myL} \myR\Gamma( \cO_{Z'}(\lfloor tD' \rfloor))$.  But, by \autoref{lem.ZeroVSCohomologicalZero} and arguing as above, this is equivalent to asking that $R' \to R'/J' \otimes_{R'}^{\myL} \myR\Gamma( \cO_{Z'}(\lfloor tD' \rfloor))$ is zero.  
    Hence we are taking an infimum of a potentially larger set to compute $c^{J'}_{\Hir}(\fra')$ and so the desired inequality follows.
\end{proof}

\subsection{Connections with characteristic zero closure operations}

Our perspective in this paper connects nicely with the equal characteristic zero closure operations introduced in \cite{EpsteinMcDonaldRGSchwede}.  Alternative approaches might also be possible via differential operators, see for instance \cite{BrennerJeffriesNunezBetancourtQuantifyingSingularities}, although we will not explore that perspective here.

\begin{proposition}
    \label{prop.ConnectionWithKHForParameterIdeals}
    Suppose $J = (x_1, \dots, x_n) \subseteq R$ where $R$ is a Noetherian excellent domain of equal characteristic zero with a dualizing complex and $\fra \subseteq \sqrt{J}$ is a nonzero ideal.  Fix $\pi : Z \to \Spec R$ a log resolution of $(R, \fra)$ with $\fra \cO_Z = \cO_Z(-D)$.  
    Choose an integer $m > c^J_{\KH}(\fra)$ and pick $h$ as in  \autoref{cor.GeneralElementChoiceNew}.  Fix $t \in \bQ_{\geq 0}$ with $m > t$.  Then the following are equivalent.  
        \begin{enumerate}
            \item $t \geq c^J_{\KH}(\fra)$.\label{prop.ConnectionWithKHForParameterIdeals.BiggerThanCJKH}
            \item For every finite domain extension $R \subseteq S$ we have that 
            \[
                (\fra S)_{\geq t} \subseteq (JS)^{\KH}
            \]
            where $(-)_{\geq t}$ denotes the fractional integral closure.
            \label{prop.ConnectionWithKHForParameterIdeals.KHOnS}
            \item For some finite domain extension $R \subseteq S_0$ and every further finite domain extension $S_0 \subseteq S$ we have that 
            \[
                (\fra S)_{\geq t} \subseteq (JS)^{\KH}
            \]
            where $(-)_{\geq t}$ denotes the fractional integral closure.
            \label{prop.ConnectionWithKHForParameterIdeals.KHOnSSuffBig}
            \item $h^{t/m} \in (J S)^{\KH}$ where $S \supseteq R$ is any (equivalently some) finite domain extension such that $h^{t/m}$ makes sense in $S$.  \label{prop.ConnectionWithKHForParameterIdeals.KHOnSGeneralElement}
        \end{enumerate}
        Furthermore,  if $x_1, \dots, x_n$ generate a parameter ideal (as defined in \autoref{def.ParameterIdealGeneral}), then the above is also equivalent to the following.
        \begin{enumerate}[resume]
            \item $(\fra S)_{\geq t} \subseteq (J S)^{\Hir}$ for every finite domain extension $S \supseteq R$.  \label{prop.ConnectionWithKHForParameterIdeals.HirOnS}

            \item $h^{t/m} \in (J S)^{\mathrm{Hir}}$ where $S \supseteq R$ is any (equivalently some) finite domain extension such that $h^{t/m}$ makes sense in $S$.  \label{prop.ConnectionWithKHForParameterIdeals.GeneralElementHirOnS}
        \end{enumerate}
\end{proposition}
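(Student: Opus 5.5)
The plan is to reduce everything to a statement about a single element on a normalized finite cover, then use the splitting results of \autoref{lem.WeCanUseLogAlterationInstead} to pass between $S$ and a log alteration. The basic translation step is the following. Let $R\subseteq S$ be a finite domain extension and $Y\to\Spec S$ a log resolution of $(S,\fra S)$, which is also a log alteration of $(R,\fra)$. Write $\fra\cO_Y=\cO_Y(-G)$.

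For $g\in S$, we have $\Div_Y(g)\geq \lceil tG\rceil$ if and only if $g\cdot\cO_Y\subseteq \cO_Y(-\lceil tG\rceil)$. By \autoref{lem.FractionalPowersByValuations}, applied with the divisorial valuations on $Y$ (Rees valuations of $\fra S$ appear there after normalization), this holds exactly when $g\in(\fra S)_{\geq t}$. Then $g\in (JS)^{\KH}$ means $S\xrightarrow{g}\Kos(\underline{x};S)\otimes^{\myL}\myR\Gamma(\cO_Y)$ is zero on $H_0$.

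To go from \autoref{prop.ConnectionWithKHForParameterIdeals.BiggerThanCJKH} to \autoref{prop.ConnectionWithKHForParameterIdeals.KHOnS}, factor this map as
\[
S\to\Kos\otimes^{\myL}\myR\Gamma(\cO_Y(\lfloor tG\rfloor))\xrightarrow{g}\Kos\otimes^{\myL}\myR\Gamma(\cO_Y),
\]
using $\cO_Y(\lfloor tG\rfloor)\xrightarrow{g}\cO_Y$, which is defined because $\lceil tG\rceil\geq\lfloor tG\rfloor$. The first map is zero by (1) together with \autoref{lem.WeCanUseLogAlterationInstead} and \autoref{prop.BehaviorUnderFinite} (the Koszul complex base changes, and $H_0$-vanishing of $1$ transfers from $R$ to $S$ as in that proof). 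Hence (1) implies (2), and (2) implies (3) trivially.

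For \autoref{prop.ConnectionWithKHForParameterIdeals.KHOnSSuffBig} $\Rightarrow$ \autoref{prop.ConnectionWithKHForParameterIdeals.KHOnSGeneralElement}, enlarge $S$ to contain $h^{t/m}$. Since $h\in\fra^m$, we have $h^{t/m}\in(\fra S)_{\geq t}$. The independence of the choice of $S$ in (4) follows from \autoref{lem.KHHirBasicProperties}\autoref{lem.KHHirBasicProperties.FiniteExtensions} applied to a common finite extension.

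The main step is \autoref{prop.ConnectionWithKHForParameterIdeals.KHOnSGeneralElement} $\Rightarrow$ \autoref{prop.ConnectionWithKHForParameterIdeals.BiggerThanCJKH}. By \autoref{cor.AlternateWayOfPrincipalizing}, it suffices to show that $R\to\Kos\otimes^{\myL}\myR\Gamma(\cO_{Z'}(\lfloor \tfrac tm\Div h\rfloor))$ is zero. Choose $S\ni g:=h^{t/m}$ finite over $R$ such that $\tfrac tm\Div(h)$ becomes the principal divisor $\Div(g)$ on a log alteration $Y\to\Spec S$ dominating $Z'$, as in the proof of \autoref{lem.UpperBound}. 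Then $\cO_Y(\lfloor \tfrac tm \Div_Y h\rfloor)=g^{-1}\cO_Y$, so the relevant map is identified with multiplication by $g$, $S\to\Kos(\underline{x};S)\otimes^{\myL}\myR\Gamma(\cO_Y)$. This vanishes on $H_0$ exactly when $g\in(JS)^{\KH}$, using that $(JS)^{\KH}$ can be computed on any log alteration by \autoref{lem.WeCanUseLogAlterationInstead} (case $t=0$). Splitting via \autoref{prop.SplittingForAlterationsNormalCrossingSupport} then descends the vanishing from $Y$ to $Z'$, which gives (1).

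The delicate point in this step is the identification with multiplication by $g$. It requires the general-element property that $\Div_{Z'}(h)$ is SNC, so that \autoref{prop.SplittingForAlterationsNormalCrossingSupport} applies to $H=\tfrac tm\Div_{Z'}h$. It also requires that we restrict from $S$-linear maps to the $R$-map sending $1\mapsto 1$; this is \autoref{lem.ZeroVSCohomologicalZero}.

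For parameter ideals, (2) implies (5) and (4) implies (6) because $(JS)^{\KH}\subseteq (JS)^{\Hir}$. Conversely, (5) implies (6) exactly as before. For (6) implies (4) I would invoke the result of \cite{EpsteinMcDonaldRGSchwede} that $\KH$ and $\Hir$ closures agree for parameter ideals, noting that $JS$ is again generated by a parameter ideal in $S$ since $S$ is finite over $R$. If that agreement is not available in the needed form, I would instead compare $\Kos(\underline{x})\to R/J$ after tensoring with the Cohen–Macaulay complex $\myR\Gamma(\cO_Y(\lfloor\cdot\rfloor))$ via local duality and vanishing \autoref{eq.RelativeKVVanishing}. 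I expect this to be the second obstacle.
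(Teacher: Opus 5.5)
Your proposal is correct and is essentially the paper's argument: the factorization through $\cO_Y(\lfloor tG\rfloor)$ for (1)$\Rightarrow$(2), the reduction via \autoref{cor.AlternateWayOfPrincipalizing} and \autoref{lem.WeCanUseLogAlterationInstead} to multiplication by $h^{t/m}$ on a cover where $\tfrac{t}{m}\Div(h)$ becomes principal for (4)$\Rightarrow$(1), and the agreement $(JS)^{\KH}=(JS)^{\Hir}$ for parameter ideals from \cite{EpsteinMcDonaldRGSchwede}, so your local-duality fallback is not needed. The only difference is organizational: you close the cycle with (3)$\Rightarrow$(4) by choosing $S\supseteq S_0$ containing $h^{t/m}$, which avoids the paper's separate (3)$\Rightarrow$(2) step of intersecting $(\fra S)_{\geq t}$ and $(JS)^{\KH}$ with intermediate rings.
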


In \autoref{prop.ConnectionWithKHForParameterIdeals.KHOnSGeneralElement}, note that there can be multiple versions of $h^{t/m} \in S$, but they all differ by units which does not impact the condition.  Thus it is harmless to suppress this issue.

\begin{proof}
    First, we justify the  ``(equivalently some)'' in \autoref{prop.ConnectionWithKHForParameterIdeals.KHOnSGeneralElement}.  For this, it suffices to observe that if $S \subseteq S'$ is a finite extension and $g \in S$, then $g \in (JS)^{\KH}$ if and only if $g \in (JS')^{\KH}$ by \cite[Proposition 3.7]{EpsteinMcDonaldRGSchwede}.

    Condition \autoref{prop.ConnectionWithKHForParameterIdeals.BiggerThanCJKH} is equivalent to requiring that $R \to \Kos_{\bullet}(\underline{x}) \otimes^{\myL} \myR\Gamma (\cO_Z(\lfloor tD \rfloor))$ is zero.  Pick a finite domain extension $S \supseteq R$ such that $h^{t/m} \in S$.  Fix $Y \to \Spec S$ a resolution of singularities such that $\nu : Y \to \Spec S \to \Spec R$ factors through $\pi : Z \to \Spec R$, and so that $Y$ is a log alteration of $(R, \fra)$ and $(R, h)$.   By \autoref{cor.AlternateWayOfPrincipalizing}, \autoref{prop.ConnectionWithKHForParameterIdeals.BiggerThanCJKH} is equivalent to requiring that $t \geq m c_{\KH}^J(h)$.   But by \autoref{lem.WeCanUseLogAlterationInstead}, that  
    is equivalent to $R \to \Kos_{\bullet}(\underline{x}; R) \otimes^{\myL}_R \myR\Gamma(\cO_Y({t \over m} \Div_Y(h)))$ being zero.
    
    As the Koszul complex commutes with base change, we see that \autoref{prop.ConnectionWithKHForParameterIdeals.BiggerThanCJKH} is equivalent to requiring that 
    \[
        R \to S \xrightarrow{\cdot h^{t/m}} S \to \Kos(\underline{x}; S) \otimes^{\myL}_S \myR \Gamma(\cO_Y) 
    \]
    is zero.  But, again using \autoref{lem.ZeroVSCohomologicalZero}, that is equivalent to requiring that $h^{t/m} \in (JS)^{\KH}$, which is \autoref{prop.ConnectionWithKHForParameterIdeals.KHOnSGeneralElement} and so we have proven that \autoref{prop.ConnectionWithKHForParameterIdeals.BiggerThanCJKH} $\Leftrightarrow$ \autoref{prop.ConnectionWithKHForParameterIdeals.KHOnSGeneralElement}. 

    Now consider \autoref{prop.ConnectionWithKHForParameterIdeals.KHOnS}.  Suppose first that $R \subseteq S$ is a finite domain extension.  Suppose $t \geq c^J_{\KH}(\fra) = c^{JS}_{\KH}(\fra S)$ where the equality is via \autoref{prop.BehaviorUnderFinite} and pick  $g \in (\fra S)_{\geq t}$.  For a log resolution $Y \to \Spec S$ with $\fra \cO_Y = \cO_Y(-D_Y)$, we have that $\Div_Y(g) \geq t D_Y$ by the definition of $(\fra S)_{\geq t}$.  
    Hence we have a factorization 
    \[
        \xymatrix@C=16pt{
            R \ar[r] & S \ar[r] &\Kos_{\bullet}(\underline{x}) \otimes^{\myL} \myR\Gamma(\cO_Y) \ar@/^2pc/[rr]^{\cdot g} \ar[r] & \Kos_{\bullet}(\underline{x}) \otimes^{\myL}\myR\Gamma(\cO_Y(\lfloor t D_Y \rfloor )) \ar[r] & \Kos_{\bullet}(\underline{x}) \otimes^{\myL} \myR\Gamma(\cO_Y).
        } 
    \]
    Our hypothesis implies that the composition of the first three maps is zero, hence the full composition is zero.  But this means that $g$ is in the kernel of $S \to H_0\big(\Kos_{\bullet}(\underline{x}; R) \otimes^{\myL}_R \myR\Gamma(\cO_Y) \big) \cong H_0\big(\Kos_{\bullet}(\underline{x}; S) \otimes^{\myL}_S \myR\Gamma(\cO_Y) \big)$ and so $g \in (JS)^{\KH}$ by definition.  Hence \autoref{prop.ConnectionWithKHForParameterIdeals.BiggerThanCJKH} $\Rightarrow$ \autoref{prop.ConnectionWithKHForParameterIdeals.KHOnS}.

    Now, certainly \autoref{prop.ConnectionWithKHForParameterIdeals.KHOnS} $\Rightarrow$ \autoref{prop.ConnectionWithKHForParameterIdeals.KHOnSSuffBig}.  Suppose \autoref{prop.ConnectionWithKHForParameterIdeals.KHOnSSuffBig} holds for some $S_0$.  We shall deduce \autoref{prop.ConnectionWithKHForParameterIdeals.KHOnS}.  It suffices to consider $R \subseteq T \subseteq S$ finite domain extensions where $S \supseteq S_0$ and hence by hypothesis, $(\fra S)_{\geq t} \subseteq (J S)^{\KH}$.  We must show the containment $(\fra T)_{\geq t} \subseteq (J T)^{\KH}$.  Now, $(\fra S)_{\geq t} \cap T = (\fra T)_{\geq t}$ from \cite[Proposition 10.5.2(7)]{HunekeSwansonIntegralClosure} as the Rees valuations of $\fra S$ restrict to the Rees valuations of $\fra T$.  Likewise $(JS)^{\KH} \cap T = (J T)^{\KH}$ from \cite[Proposition 3.7]{EpsteinMcDonaldRGSchwede}.  
    Therefore \autoref{prop.ConnectionWithKHForParameterIdeals.KHOnS} $\Leftrightarrow$ \autoref{prop.ConnectionWithKHForParameterIdeals.KHOnSSuffBig}.

    Next suppose \autoref{prop.ConnectionWithKHForParameterIdeals.KHOnS} but that \autoref{prop.ConnectionWithKHForParameterIdeals.BiggerThanCJKH} is false, and so $t < c^J_{\KH}(\fra)$.  Choose a finite extension $S$ with $h^{t/m} \in S$ as in \autoref{prop.ConnectionWithKHForParameterIdeals.KHOnSGeneralElement} and so $h^{t/m} \notin (J S)^{\KH}$ by our assumption on $t$.  But $h \in (\fra S)_{\geq m}$ by construction and so $h^{t/m} \in (\fra S)_{\geq t}$.  This is a contradiction and hence \autoref{prop.ConnectionWithKHForParameterIdeals.KHOnS} $\Leftrightarrow$ \autoref{prop.ConnectionWithKHForParameterIdeals.BiggerThanCJKH}.
    
    For the final statements, assume that $J$ is a parameter ideal.  We see that $(J S)^{\KH} = (JS)^{\mathrm{Hir}}$ by \cite[Corollary 6.19]{EpsteinMcDonaldRGSchwede}, and the fact that both Koszul-Hironaka closure and Hironaka pre-closure commute with localization.  This  proves the equivalence of \autoref{prop.ConnectionWithKHForParameterIdeals.KHOnS} with \autoref{prop.ConnectionWithKHForParameterIdeals.HirOnS} and also of \autoref{prop.ConnectionWithKHForParameterIdeals.KHOnSGeneralElement} with  \autoref{prop.ConnectionWithKHForParameterIdeals.GeneralElementHirOnS}.  It also  justifies the ``(equivalently some)'' in \autoref{prop.ConnectionWithKHForParameterIdeals.GeneralElementHirOnS}.
\end{proof}

\begin{proposition}
    \label{prop.KHClosureAgnostic}
    Suppose $J = (x_1, \dots, x_n) \subseteq R$ where $R$ is a Noetherian excellent domain of equal characteristic zero with a dualizing complex, 
    and $\fra 
    \subseteq \sqrt{J}$ is a nonzero ideal.  
    Then 
    \[
        c_{\KH}^{J}(\fra) = c_{\KH}^{J^{\KH}}(\fra).
    \]
\end{proposition}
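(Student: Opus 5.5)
We prove the two inequalities separately. Since $J \subseteq J^{\KH}$ (\autoref{lem.KHHirBasicProperties} \autoref{lem.KHHirBasicProperties.Containments}), \autoref{prop.InitialProperties} \autoref{prop.InitialProperties.BiggerJ} immediately gives $c_{\KH}^{J^{\KH}}(\fra) \leq c_{\KH}^{J}(\fra)$. Note also that $J^{\KH} \subseteq \overline{J}$, so $\sqrt{J^{\KH}} = \sqrt{J}$ and $\fra \subseteq \sqrt{J^{\KH}}$; the right-hand side is therefore defined.

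For the reverse inequality we use the characterization \autoref{prop.ConnectionWithKHForParameterIdeals} \autoref{prop.ConnectionWithKHForParameterIdeals.BiggerThanCJKH}$\Leftrightarrow$\autoref{prop.ConnectionWithKHForParameterIdeals.KHOnS}. That characterization holds for arbitrary ideals; the parameter hypothesis is only needed for the Hironaka parts. Set $t = c_{\KH}^{J^{\KH}}(\fra)$, which is a minimum by \autoref{lem.InfsAreMins}. Choose an integer $m > c^J_{\KH}(\fra) \geq t$, so that the proposition applies to both ideals with the same $m$. Applying \autoref{prop.ConnectionWithKHForParameterIdeals} to the ideal $J^{\KH}$ gives
\[
(\fra S)_{\geq t} \subseteq (J^{\KH} S)^{\KH}
\]
for every finite domain extension $R \subseteq S$. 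It then suffices to show that $(J^{\KH} S)^{\KH} = (JS)^{\KH}$. Once we have this, \autoref{prop.ConnectionWithKHForParameterIdeals} applied to $J$ gives $t \geq c^J_{\KH}(\fra)$, which completes the proof.

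The key step, which is the main (though mild) obstacle, is the equality $(J^{\KH}S)^{\KH} = (JS)^{\KH}$. The containment $\supseteq$ is monotonicity. For $\subseteq$, note that $J^{\KH} \subseteq (JS)^{\KH} \cap R$ by \autoref{lem.KHHirBasicProperties} \autoref{lem.KHHirBasicProperties.FiniteExtensions}. Hence $J^{\KH} S \subseteq (JS)^{\KH}$, since $(JS)^{\KH}$ is an ideal of $S$. Monotonicity and idempotence (\autoref{lem.KHHirBasicProperties} \autoref{lem.KHHirBasicProperties.OrderPreserving}, \autoref{lem.KHHirBasicProperties.Idempotence}) then give
\[
(J^{\KH}S)^{\KH} \subseteq ((JS)^{\KH})^{\KH} = (JS)^{\KH}.
\]

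If one prefers to avoid \autoref{prop.ConnectionWithKHForParameterIdeals}, there is an alternative route that works directly with the derived definition. Write $J^{\KH} = (x_1,\dots,x_n,g_1,\dots,g_k)$ with $g_i \in J^{\KH}$, and argue as in \autoref{lem.KHThresholdIndependentOfGenerators}, showing that multiplication by $g_i$ on $\Kos_\bullet(\underline{x}) \otimes^{\myL}\myR\Gamma(\cO_Z(\lfloor tD\rfloor))$ is zero. That requires module-structure arguments over $\myR\Gamma(\cO_Z)$, so the closure-theoretic route above is the one we would follow.
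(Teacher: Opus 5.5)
Your proposal is correct and matches the paper's proof: the inequality $c_{\KH}^{J^{\KH}}(\fra) \leq c_{\KH}^{J}(\fra)$ comes from monotonicity in $J$, and the reverse comes from the equivalence of conditions (1) and (2) in \autoref{prop.ConnectionWithKHForParameterIdeals} combined with $(J^{\KH}S)^{\KH} \subseteq ((JS)^{\KH})^{\KH} = (JS)^{\KH}$. The one small difference is that the paper just cites persistence for $J^{\KH}S \subseteq (JS)^{\KH}$, while you derive it explicitly from the finite-extension property $(JS)^{\KH}\cap R = J^{\KH}$, which is fine.
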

\begin{proof}
    The inequality $\geq$ follows from \autoref{prop.InitialProperties} \autoref{prop.InitialProperties.BiggerJ}.  
    
    Set $d = c_{\KH}^{J}(\fra) \geq c_{\KH}^{J^{\KH}}(\fra) = c$ and note that both $c$ and $d$ are rational numbers by \autoref{lem.RationalThresholds}.

     For each finite domain extension $R \subseteq S$, we have that $(\fra S)_{\geq c} \subseteq (J^{\KH}S)^{\KH}$ by \autoref{prop.ConnectionWithKHForParameterIdeals}.  But we know that $(J^{\KH}S)^{\KH} \subseteq ((JS)^{\KH})^{\KH} = (JS)^{\KH}$ as Koszul-Hironaka closure is persistent and idempotent, see \autoref{lem.KHHirBasicProperties}.  Hence $c \geq c^J_{\KH}(\fra) = d$ and the result is proven.
\end{proof}

\begin{proposition}
    \label{prop.TwoDefinitionsAgreeForParameterIdeals}
    Suppose $R$ is a Noetherian excellent domain of equal characteristic zero with a dualizing complex, $J = (x_1, \dots, x_n) \subseteq R$ is a parameter ideal,  
    and $\fra \subseteq \sqrt{J}$ is a nonzero ideal.  
    Then 
    \[
         c_{\Hir}^J(\fra) = c_{\KH}^{J}(\fra).
    \]
\end{proposition}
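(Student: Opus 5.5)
One inequality, $c^J_{\Hir}(\fra) \leq c^J_{\KH}(\fra)$, is already \autoref{prop.InitialProperties} \autoref{prop.InitialProperties.ComparisonOfTwoThresholds}. So the plan is to prove the reverse inequality $c^J_{\KH}(\fra) \leq c^J_{\Hir}(\fra)$ by passing through the general element characterization in \autoref{prop.ConnectionWithKHForParameterIdeals}, whose parameter-ideal part gives \autoref{prop.ConnectionWithKHForParameterIdeals.KHOnSGeneralElement} $\Leftrightarrow$ \autoref{prop.ConnectionWithKHForParameterIdeals.GeneralElementHirOnS}.

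Set $t = c^J_{\Hir}(\fra)$, which is finite and rational by \autoref{lem.UpperBound} and \autoref{lem.RationalThresholds}, and by \autoref{lem.InfsAreMins} is attained as a minimum. Fix an integer $m > c^J_{\KH}(\fra) \geq t$ and choose $h = \prod h_i$ as in \autoref{cor.GeneralElementChoiceNew}. By \autoref{cor.AlternateWayOfPrincipalizing}, applied with $B = R/J$, the map $R \to R/J \otimes^{\myL} \myR\Gamma(\cO_{Z'}(\lfloor \tfrac{t}{m}\Div_{Z'} h\rfloor))$ is zero. I would then pass to a finite domain extension $S \supseteq R$ containing $h^{t/m}$ and a resolution $Y \to \Spec S$ that is a log alteration of $(R,h)$ factoring through $Z'$. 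By \autoref{lem.WeCanUseLogAlterationInstead} the vanishing persists on $Y$, where $\lfloor \tfrac{t}{m}\Div_Y h\rfloor = \Div_Y(h^{t/m})$ is principal. Hence the composite
\[
R \to S \xrightarrow{\cdot h^{t/m}} S \to R/J \otimes^{\myL}_R \myR\Gamma(\cO_Y)
\]
is zero.

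The main obstacle comes next. Over $R$ the relevant complex is $R/J \otimes^{\myL}_R \myR\Gamma(\cO_Y)$, but to conclude $h^{t/m} \in (JS)^{\Hir}$ we need vanishing in $S/JS \otimes^{\myL}_S \myR\Gamma(\cO_Y)$. Unlike the Koszul complex, $R/J$ does not commute with base change. However, there is a natural map
\[
R/J \otimes^{\myL}_R \myR\Gamma(\cO_Y) \to S/JS \otimes^{\myL}_S \myR\Gamma(\cO_Y),
\]
exactly as in the proof of \autoref{prop.BehaviorUnderFinite}, and vanishing is preserved under composing with it. So the displayed composite being zero yields, via \autoref{lem.ZeroVSCohomologicalZero}, that $1 \mapsto h^{t/m}$ dies in $H_0(S/JS \otimes^{\myL}_S \myR\Gamma(\cO_Y))$. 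We must also make sure we use an $S$-linear statement: the map $S \to S/JS\otimes^{\myL}_S\myR\Gamma(\cO_Y)$ sending $1 \mapsto h^{t/m}$ is the $S$-linear extension of the $R$-map, so zero on $1 \in R$ suffices by \autoref{lem.ZeroVSCohomologicalZero}. Thus $h^{t/m} \in (JS)^{\Hir}$, and $Y$ is a resolution of $\Spec S$, so this is the Hironaka pre-closure in $S$.

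Finally, since $J$ is a parameter ideal, \autoref{prop.ConnectionWithKHForParameterIdeals} \autoref{prop.ConnectionWithKHForParameterIdeals.GeneralElementHirOnS} $\Rightarrow$ \autoref{prop.ConnectionWithKHForParameterIdeals.BiggerThanCJKH} gives $t \geq c^J_{\KH}(\fra)$, using $m > t$. This completes the reverse inequality. The only subtle point to double check is the factorization through $Z'$ versus $Y$ and that $Y$ serves simultaneously as a log alteration for $(R,h)$ and a resolution of $S$; the rest is bookkeeping.
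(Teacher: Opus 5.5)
Your proof is correct and is essentially the paper's argument. The paper also pushes the Hironaka vanishing up to a finite extension $S$ via the natural map $R/J\otimes^{\myL}_R(-)\to S/JS\otimes^{\myL}_S(-)$ (packaged there as \autoref{prop.BehaviorUnderFinite}), deduces membership in $(JS)^{\Hir}=(JS)^{\KH}$, and concludes with \autoref{prop.ConnectionWithKHForParameterIdeals}; the only difference is that it checks condition \autoref{prop.ConnectionWithKHForParameterIdeals.KHOnS} for every $g\in(\fra S)_{\geq c}$, whereas you check only the single element $h^{t/m}$. One small bookkeeping point: your $h$ is chosen to be adapted to the $\KH$ threshold, so instead of citing \autoref{cor.AlternateWayOfPrincipalizing} with $B=R/J$, note that the implication you need holds for any such $h\in\fra^m$, because $m\,c^J_{\Hir}(h)\leq m\,c^J_{\Hir}(\fra^m)=c^J_{\Hir}(\fra)$ by \autoref{prop.InitialProperties}.
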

\begin{proof}
    The inequality $\leq$ is \autoref{prop.InitialProperties} \autoref{prop.InitialProperties.ComparisonOfTwoThresholds}.  Set $c = c_{\Hir}^J(\fra)$ and note, by \autoref{prop.BehaviorUnderFinite}, that $c \geq c_{\Hir}^{JS}(\fra S)$ where $S \supseteq R$ is any finite domain extension of $R$.  Fix $Y \to \Spec S$ a log resolution of $(S, \fra S)$ with $\fra \cO_Y = \cO_Y(-D_Y)$.  We know that $S \to S/JS \otimes^{\myL}_S \myR\Gamma(\cO_Y(\lfloor c D_Y \rfloor))$ is zero in the derived category where here we use $c \geq c_{\Hir}^{JS}(\fra S)$.  For any $g \in (\fra S)_{\geq c}$, we see that $\Div_Y(g) \geq c D_Y$ and hence we have a factorization 
    \[
        S \to \myR\Gamma(\cO_Y(\lfloor c D_Y \rfloor)) \to \myR\Gamma(\cO_Y(\Div_Y(g))) 
    \]
    and note that we can identify that composition with $S \xrightarrow{1 \mapsto g} \myR\Gamma(\cO_Y)$.  Thus $S \xrightarrow{1 \mapsto g} S \to S/JS \otimes^{\myL}_S \myR\Gamma(\cO_Y)$ is also zero.  
    Therefore the kernel of $S \to H_0 (S/JS \otimes^{\myL}_S \myR\Gamma(\cO_Y))$ contains $g$, or, in other words, $g \in (JS)^{\Hir}$.  However, as $JS$ is a parameter ideal, we see that $g \in (JS)^{\KH}$ and thus we conclude that $(\fra S)_{\geq c} \subseteq (JS)^{\KH}$.  This implies that $c \geq c_{\KH}^J(\fra)$ by \autoref{prop.ConnectionWithKHForParameterIdeals} and so completes the proof.
\end{proof}

\section{Parameter ideals}

In this section we recover the analog of \cite[Theorem 3.3]{HunekeMustataTakagiWatanabeFThresholdsTightClosureIntClosureMultBounds}.  Although we could also obtain these results via reduction to characteristic $p \gg 0$ (see \autoref{sec.ComparisonWithFThresholds}), we give a purely characteristic zero proof.




\begin{lemma} \label{lem.ParameterDiagonal}
    Suppose $R$ is an excellent Noetherian domain of equal characteristic zero with a dualizing complex.  Suppose $x_1, \dots, x_n$ generate a nonzero parameter ideal $J = (x_1, \dots, x_n) \subsetneq R$ as in \autoref{def.ParameterIdealGeneral}.  Then $c_{\KH}^J(J) = n$.
\end{lemma}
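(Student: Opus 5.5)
We prove $c_{\KH}^J(J)\le n$ and $c_{\KH}^J(J)\ge n$ separately.

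\textbf{Upper bound.} This is immediate from \autoref{lem.UpperBound} applied with $\fra = J$ and $l = 1$: the map $R \to \Kos_{\bullet}(\underline{x}) \otimes^{\myL} \myR\Gamma(\cO_Z(\lfloor nD \rfloor))$ is zero, so $c_{\KH}^J(J) \le n$.

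\textbf{Lower bound: reduction.} By \autoref{lem.Localization} we may localize at a maximal ideal $\fram \supseteq J$. There $x_1,\dots,x_n$ is part of a system of parameters, and by \autoref{prop.TwoDefinitionsAgreeForParameterIdeals} we may work with $\KH$ or $\Hir$ interchangeably. Let $t<n$ be rational; we must show $t < c^J_{\KH}(J)$. We use the criterion of \autoref{prop.ConnectionWithKHForParameterIdeals}. It suffices to find a finite domain extension $S \supseteq R$ and an element $g\in (JS)_{\geq t}$ with $g \notin (JS)^{\KH}$. The natural candidate is a product of fractional powers of the parameters,
\[
g = x_1^{a_1}\cdots x_n^{a_n},\qquad a_i = 1-\epsilon \text{ rational},\qquad \sum a_i = n(1-\epsilon) \ge t .
\]
Here $S$ is obtained by adjoining the needed roots of the $x_i$.

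\textbf{Verifying $g \in (JS)_{\geq t}$.} By \autoref{lem.FractionalPowersByValuations}, for every valuation $v$ we have $v(g)=\sum a_i v(x_i) \ge (\sum a_i)\min_i v(x_i)$. Since $\min_i v(x_i)$ is $v$ of an element of $J$, this gives $g \in (JS)_{\ge \sum a_i}\subseteq (JS)_{\geq t}$.

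\textbf{Verifying $g \notin (JS)^{\KH}$ (main obstacle).} We must show $x_1^{1-\epsilon}\cdots x_n^{1-\epsilon} \notin (x_1,\dots,x_n)^{\KH}$ in $S$. The plan is a monomial-type colon argument for KH closure of parameter ideals:
\[
\bigl((x_1,\dots,x_n)S\bigr)^{\KH} : (x_1\cdots x_n)^{\epsilon} \;\subseteq\; \bigl((x_1^{1+\epsilon'},\dots)\bigr)^{\KH}\text{-type expressions},
\]
reducing to the statement that $x_1^{N-1}\cdots x_n^{N-1} \notin (x_1^N,\dots,x_n^N)^{\KH}$.

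Concretely, I would first pass to $S'$ in which $y_i = x_i^{1/N}$ exist, with $\epsilon = 1/N$. The $y_i$ still form part of a system of parameters locally. In this notation
\[
g = (y_1\cdots y_n)^{N-1},\qquad JS' = (y_1^N,\dots,y_n^N).
\]
So it suffices to show $(y_1\cdots y_n)^{N-1}\notin (y_1^N,\dots,y_n^N)^{\KH}$. For this I would use the ``colon capturing'' or monomial property for KH closure from \cite{EpsteinMcDonaldRGSchwede}, namely the direct limit system defining $H^n$ of $\myR\Gamma(\cO_Y)$ localized along the parameters.

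In more detail, we want $\Kos(\underline{y}^N)\otimes^{\myL}\myR\Gamma(\cO_Y)$ to map into the local cohomology
\[
H^n_{(\underline{y})}\bigl(\myR\Gamma(\cO_Y)\bigr)
\]
compatibly with the \v{C}ech transition maps. Under this map the class of $(y_1\cdots y_n)^{N-1}$ becomes the class of $1/(y_1\cdots y_n)$. That class is nonzero provided $J^{\KH}\ne R$, which follows from faithfulness of KH closure (\cite[Proposition 6.2]{EpsteinMcDonaldRGSchwede}), i.e., $(y)^{\KH}\subseteq\fram$. Making this compatibility with $\myR\Gamma(\cO_Y)$ precise, and checking that the transition maps behave under the derived tensor product, is the step I expect to require the most care. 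Together with the upper bound, this gives $c_{\KH}^J(J)=n$.
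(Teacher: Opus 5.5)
Your proposal is correct and is essentially the paper's argument. Both proofs localize and pass to a finite extension $S$ containing $N$th roots $y_i$ of the parameters. Both then reduce to showing $(y_1\cdots y_n)^{N-1}\notin \bigl((y_1^N,\dots,y_n^N)S\bigr)^{\KH}$. The paper gets this from the colon-capturing result \cite[Theorem 4.1]{EpsteinMcDonaldRGSchwede}, namely $\bigl((y_1^N,\dots,y_n^N)S\bigr)^{\KH}:(y_1\cdots y_n)^{N-1}\subseteq \bigl((y_1,\dots,y_n)S\bigr)^{\KH}$, combined with faithfulness.

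The only difference is which criterion you test. You apply condition \autoref{prop.ConnectionWithKHForParameterIdeals.KHOnS} of \autoref{prop.ConnectionWithKHForParameterIdeals} directly to $g=(x_1\cdots x_n)^{(N-1)/N}$, and your valuation check via \autoref{lem.FractionalPowersByValuations} is fine. The paper instead argues by contradiction through condition \autoref{prop.ConnectionWithKHForParameterIdeals.KHOnSGeneralElement}, using a product $h$ of general linear combinations, and so must also arrange $J=(h_1,\dots,h_n)$. Your version is slightly more direct.

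One caution about your local-cohomology sketch. Faithfulness alone does not make the class of $1/(y_1\cdots y_n)$ nonzero in the direct limit. You also need the transition maps
\[
H_0\bigl(\Kos_{\bullet}(\underline{y}^{N})\otimes^{\myL}\myR\Gamma(\cO_Y)\bigr)\to H_0\bigl(\Kos_{\bullet}(\underline{y}^{N+1})\otimes^{\myL}\myR\Gamma(\cO_Y)\bigr)
\]
to be injective, at least on the image of $S$. This follows from the vanishing of higher Koszul homology of $\myR\Gamma(\cO_Y)$ on a partial system of parameters, and it is exactly what the cited colon-capturing theorem packages. So you can simply invoke that theorem rather than rebuild the direct-limit argument.
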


\begin{proof}
    Without loss of generality, and using \autoref{lem.Localization} we may assume that $(R, \fram)$ is local with $x_1, \dots, x_n \in \fram$.  
    In view of \autoref{lem.UpperBound}, it suffices to show that $c_{\KH}^J(J) \geq n$.  
    Suppose for a contradiction that  $c_{\KH}^J(J) < n$.

    Choose successively $h_1,\ldots, h_n\in J$ general $\bQ$-linear combinations of the $x_i$ as in \autoref{prop.GeneralElementComparisonLocal} and set $h = \prod h_i$.  Because the $h_i$ are general and there are $n$ of them, we may also assume that $J = (h_1, \dots, h_n)$.  Let $M$ be a sufficiently large integer such that $c_{\KH}^J(J) < \frac{Mn-1}{M}$ and so $c^J_{\KH}(h) < {Mn - 1 \over Mn}$.
    Consider 
    \[ 
        S = R[h_1^{1/(Mn)}, \dots, h_n^{1/(Mn)}] \subseteq R^+
    \]      
    a finite extension of $R$.  As $R$ is local, so is $S$.  
    Let $Y \to \Spec S$ be a log resolution of $(S, JS)$ and hence a regular alteration over $\Spec R$.  Set $g=\prod h_i^{1/(Mn)} = h^{1/(Mn)}$. By \autoref{prop.ConnectionWithKHForParameterIdeals} \autoref{prop.ConnectionWithKHForParameterIdeals.KHOnSGeneralElement}, \autoref{lem.ZeroVSCohomologicalZero}, the definition of Koszul-Hironaka closure, and since $g^{Mn-1} = h^{Mn-1 \over Mn}$, 
    \[
    S \xrightarrow{1 \mapsto g^{Mn-1}} \Kos({h_1,\ldots, h_n};S) \otimes^{\myL}_S \myR \Gamma(Y, \cO_Y)
    \]
    is zero.  As a consequence, 
    \[ 
        \begin{array}{rcl}
            1 & \in & ((h_1^{Mn/Mn},\ldots, h_n^{Mn/Mn})S)^{\KH}: \prod h_i^{(Mn-1)/(Mn)} 
        \end{array}
    \]
    But now, thanks to \cite[Theorem 4.1]{EpsteinMcDonaldRGSchwede}, $1\in((h_1^{1/(Mn)},\ldots, h_n^{1/(Mn)})S)^{\KH}$.  However, we also have that $\big((h_1^{1/(Mn)},\ldots, h_n^{1/(Mn)})S\big)^{\KH}\subsetneq S$ as Koszul-Hironaka closure is faithful \cite[Proposition 3.3]{EpsteinMcDonaldRGSchwede}. This is a contradiction and so it follows that $c_{\KH}^J(J) \geq n$.
\end{proof}

For the next result, we need some notation.  
\begin{definition}
    \label{def.BEHDefinition}
    Suppose that $R$ is an excellent Noetherian domain of equal characteristic zero  with a dualizing complex and $J = (x_1, \dots, x_n) = (\underline{x}) \neq 0$.  Fix $Z \to \Spec R$ a log resolution of $(R, J)$.  For any integer $\lambda > 0$ we write 
    \[
        \BEH(\underline{x}, \lambda) := \ker \big( R \to H_0(\BE^{\lambda}(\underline{x}) \otimes^{\myL} \myR\Gamma(\cO_Z)) \big)
    \]
    where $\BE^{\lambda}(\underline{x})$ is the Buchsbaum-Eisenbud $L$-complex (or equivalently, an appropriate Eagon-Northcott complex)  which is a free resolution of $R/(x_1, \dots, x_n)^\lambda$ if the $x_i$ form a regular sequence.  
\end{definition}

The object $\BEH(\underline{x}, \lambda)$ appears implicitly in \cite{MaMcDonaldRGSchwede.BrianconSkoda} where the main theorem shows that 
\[
    \overline{J^{\lambda+n-1}} \subseteq \BEH(\underline{x}, \lambda).
\]
Furthermore, $\BEH(\underline{x}, \lambda)$ agrees with the Koszul-Hironaka closure of $J$ if $\lambda = 1$ and in general it is contained in $(J^{\lambda})^{\Hir}$ since we have a natural factorization $R \to \BE^{\lambda}(\underline{x}) \to R/J^{\lambda}$.  Hence we have that 
\begin{equation}
    \label{eq.BEHInIntegralClosure}
    \BEH(\underline{x}, \lambda) \subseteq (J^{\lambda})^{\Hir} \subseteq \overline{J^{\lambda}}
\end{equation}
by \cite[Remark 6.6]{EpsteinMcDonaldRGSchwede}.
As we only need this object as an interim step in our proof, we will not develop its general theory here.  However, we remark that it is reasonable to hope that $\BEH(\underline{x}, \lambda)$ only depends on $J$.  When $\lambda = 1$ this holds by \cite[Proposition 3.3]{EpsteinMcDonaldRGSchwede}.


\begin{lemma}\label{lem.intersectionBEH}
    Suppose $R$ is an excellent Noetherian domain of equal characteristic zero with a dualizing complex and $x_1, \dots, x_n$ generate a nonzero parameter ideal $J \subsetneq R$ in the sense of \autoref{def.ParameterIdealGeneral}.  Then for every integer $\lambda > 0$, 
    \[
        \BEH(\underline{x}, \lambda)=\bigcap_{c_1,\ldots, c_n} (x_1^{c_1},\ldots, x_n^{c_n})^{\KH}
    \]
    where $(c_1, \ldots, c_n)$ runs over all $n$-tuples of positive integers such that $c_1+\cdots+c_n= \lambda+n-1$. 
\end{lemma}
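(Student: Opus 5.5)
The plan is to work in the universal case and transport everything along $S_{\bZ} = \bZ[X_1, \dots, X_n] \to R$, $X_i \mapsto x_i$. Over $S_{\bZ}$ (or over $\bQ[\underline{X}]$, which is harmless since $R$ contains $\bQ$) Hochster's observation is the exact statement
\[
(X_1, \dots, X_n)^{\lambda} = \bigcap_{c_1 + \dots + c_n = \lambda + n - 1} (X_1^{c_1}, \dots, X_n^{c_n}),
\]
where the $c_i$ are positive. This gives a short exact sequence
\[
0 \to S/(\underline{X})^{\lambda} \to \bigoplus_{\underline{c}} S/(X_1^{c_1}, \dots, X_n^{c_n}) \to M \to 0
\]
with $M$ of finite length, supported at $(\underline{X})$. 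Base changing via $-\otimes^{\myL}_S R \otimes^{\myL}_R \myR\Gamma(\cO_Z)$, and using that $\BE^{\lambda}(\underline{x})$ and $\Kos_{\bullet}(x_1^{c_1}, \dots, x_n^{c_n})$ are the base changes of the resolutions over $S$, we get a distinguished triangle
\[
\BE^{\lambda}(\underline{x}) \otimes^{\myL} \myR\Gamma(\cO_Z) \to \bigoplus_{\underline{c}} \Kos_{\bullet}(\underline{x}^{\underline{c}}) \otimes^{\myL} \myR\Gamma(\cO_Z) \to (M \otimes^{\myL}_S R) \otimes^{\myL}_R \myR\Gamma(\cO_Z) \xrightarrow{+1}.
\]
The first map is compatible with the maps from $R$ sending $1$ to the respective classes of $1$.

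The containment $\subseteq$ is then immediate. If $r$ maps to zero in $H_0$ of the first term, it maps to zero in each summand of the middle term. That is exactly the condition $r \in (x_1^{c_1}, \dots, x_n^{c_n})^{\KH}$, using independence of generators from \autoref{lem.KHHirBasicProperties}.

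For $\supseteq$, the long exact sequence shows that the obstruction is the connecting map
\[
\delta : H_1\big((M \otimes^{\myL}_S R) \otimes^{\myL} \myR\Gamma(\cO_Z)\big) \to H_0\big(\BE^{\lambda}(\underline{x}) \otimes^{\myL} \myR\Gamma(\cO_Z)\big).
\]
We only need that the class of an element $r$ in the intersection, which lies in $\Image \delta$, is actually zero. First, using \autoref{lem.KHHirBasicProperties} \autoref{lem.KHHirBasicProperties.Localization}, I would reduce to $R$ local with $\underline{x}$ part of a system of parameters. Next, I would filter $M$ by copies of the residue field $S/(\underline{X})$. Then $M \otimes^{\myL}_S R$ is an iterated extension of copies of $\Kos_{\bullet}(\underline{x})$ (up to shift), and the relevant $H_1$ is built from $H_1(\Kos_{\bullet}(\underline{x}) \otimes^{\myL} \myR\Gamma(\cO_Z))$. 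The homology of $\Kos_{\bullet}(\underline{x}) \otimes^{\myL} \myR\Gamma(\cO_Z)$ in positive degrees is "Cohen–Macaulay up to alteration": its image vanishes after passing to a suitable regular alteration $Y \to Z$. This is the mechanism behind colon capturing for $\KH$ closure in \cite{EpsteinMcDonaldRGSchwede}, used in the form \cite[Theorem 4.1]{EpsteinMcDonaldRGSchwede} in \autoref{lem.ParameterDiagonal}. The finite extensions needed would be handled via \autoref{lem.KHHirBasicProperties} \autoref{lem.KHHirBasicProperties.FiniteExtensions} and \autoref{lem.WeCanUseLogAlterationInstead}.

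If that direct route is awkward, my fallback is an element-wise induction on $\lambda$, mimicking Hochster's proof. The base case is $\lambda = 1$, where both sides equal $J^{\KH}$. For the inductive step, I would use the triangle relating $\BE^{\lambda+1}$, $\BE^{\lambda}$ and the pieces $(\underline{x})^{\lambda}/(\underline{x})^{\lambda+1}$, each of which is a sum of copies of $R/J$ in the universal case. Colon capturing,
\[
(x_1^{c_1}, \dots, x_n^{c_n})^{\KH} : x_i \subseteq (x_1^{c_1}, \dots, x_i^{c_i+1}, \dots, x_n^{c_n})^{\KH},
\]
would then handle the connecting maps.

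The main obstacle is precisely showing that $\delta$ (or the inductive connecting maps) kills the relevant class. I would first try to prove that, after replacing $Z$ by a sufficiently large log alteration (harmless by \autoref{lem.WeCanUseLogAlterationInstead} and the splitting in \autoref{prop.SplittingForAlterationsNormalCrossingSupport}), the map
\[
H_1\big(\Kos_{\bullet}(\underline{x}) \otimes^{\myL} \myR\Gamma(\cO_Z)\big) \to H_1\big(\Kos_{\bullet}(\underline{x}) \otimes^{\myL} \myR\Gamma(\cO_Y)\big)
\]
is zero. This is a derived analogue of the vanishing of Koszul homology on big Cohen–Macaulay algebras, and it is what makes the Hochster intersection formula pass to $\BEH$.
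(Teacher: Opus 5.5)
You have the same setup as the paper: Hochster's monomial intersection over $\bQ[T_1,\dots,T_n]$, the short exact sequence with finite-length cokernel, and base change along $T_i \mapsto x_i$. Your proof of the inclusion $\subseteq$ is also fine.

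The gap is in $\supseteq$. You need the image of $r$ in $H_0(\BE^{\lambda}(\underline{x}) \otimes^{\myL} \myR\Gamma(\cO_Z))$, which lies in the image of $\delta$, to be zero, and you never show this. The mechanism you propose cannot give it in equal characteristic zero. For any normal alteration $\nu \colon Y \to Z$, the map $\cO_Z \to \myR\nu_*\cO_Y$ splits; this is \autoref{lem.SplittingForAlterationsNormalCrossingSupport} with $\Delta = 0$, which you cite yourself. Hence
\[
H_1\big(\Kos_{\bullet}(\underline{x}) \otimes^{\myL} \myR\Gamma(\cO_Z)\big) \to H_1\big(\Kos_{\bullet}(\underline{x}) \otimes^{\myL} \myR\Gamma(\cO_Y)\big)
\]
is split injective. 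Unlike $R^+$ in characteristic $p$, alterations kill nothing here: this map is zero only if its source is already zero. So your intermediate claim is equivalent to the vanishing you need, and passing to alterations gives no way to prove it.

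The fallback induction has the same problem, because its connecting maps need the same input. Also, the colon capturing you display is backwards; it should read $(x_1^{c_1}, \dots, x_i^{c_i+1}, \dots, x_n^{c_n})^{\KH} : x_i \subseteq (x_1^{c_1}, \dots, x_n^{c_n})^{\KH}$.

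The missing idea, which is exactly what the paper uses, is that $\myR\Gamma(\cO_Z)$ is itself a maximal Cohen--Macaulay complex. By Grothendieck duality and Grauert--Riemenschneider vanishing, $\myR\Gamma(\cO_Z) \simeq \myR\Hom_R(\pi_*\omega_Z, \omega_R^{\bullet})[-d]$, with $\omega_R$ in degree $-d$. By local duality, $\myR\Gamma_{\fram}(\myR\Gamma(\cO_Z))$ is then the Matlis dual of $\pi_*\omega_Z$ placed in cohomological degree $d$, and the same holds after localizing at any prime. From this you get the Koszul vanishing in two steps:
\begin{itemize}
\item For a full system of parameters $\underline{y}$, the homology of $\Kos_{\bullet}(\underline{y}) \otimes^{\myL} \myR\Gamma(\cO_Z)$ is $\fram$-torsion. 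So this complex is $\Kos_{\bullet}(\underline{y}) \otimes H^d_{\fram}(\myR\Gamma(\cO_Z))[-d]$, which has no homology in positive homological degrees.
\item For a partial system $\underline{x}$, suppose some $H_i$ with $i>0$ is nonzero, and take the highest such $i$. Localize at a minimal prime of its support and extend $\underline{x}$ to a full system of parameters there; the top Koszul homology on the extra elements is then nonzero in positive degree, contradicting the first step.
\end{itemize}
Hence $H_i(\Kos_{\bullet}(\underline{x}) \otimes^{\myL} \myR\Gamma(\cO_Z)) = 0$ for all $i > 0$.

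With this, your filtration of the cokernel by copies of $S/(\underline{X})$ shows its $H_1$ vanishes, so $\delta = 0$. The $H_0$ map into $\bigoplus_{\underline{c}} H_0(\Kos_{\bullet}(\underline{x}^{\underline{c}}) \otimes^{\myL} \myR\Gamma(\cO_Z))$ is therefore injective, and $\supseteq$ follows with no alterations or colon capturing.
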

\begin{proof}
    Without loss of generality, we may assume that $R$ is local, as the formation of both sides commutes with localization, and hence $x_1, \dots, x_n$ form part of a system of parameters.  
    Consider the ring $S=\bQ[T_1,\ldots, T_n]_{(T_1,\ldots, T_n)}$ with the map of rings $S\rightarrow R$ such that $T_i\mapsto x_i$ and $r\in \bQ\mapsto r$. Let $\frb=(T_1,\ldots, T_n)$ and set $\Lambda=\{\mathbf{a}=(a_1,\ldots, a_n)\in \bZ_{>0}^n\,|\, a_1+\cdots+a_n=\lambda+n-1\}$.
    A result of Hochster (\cite[Section 3]{LipmanTeissierPseudoRational}, \cite[Lemma 3.5]{RodriguezSchwedeBrianconSkodaViaWeakFunctoriality}) says that 
    $$(T_1,\ldots, T_n)^\lambda=\bigcap_{\mathbf{a}\in \Lambda} (T_1^{a_1},\ldots, T_n^{a_n}),$$
    and so we obtain an exact sequence
    \begin{equation}\label{eq.sec}
        0\rightarrow S/\frb^\lambda\rightarrow \bigoplus_{\mathbf{a}\in \Lambda} S/(T_1^{a_1},\ldots, T_n^{a_n})\rightarrow C\rightarrow 0
    \end{equation}
    where $C$ is a finite-length module over $S$.

    Let $Z$ also denote the (localized) log resolution from \autoref{def.BEHDefinition}.  Note that 
    \begin{align*}
        S/\frb\otimes_S^{\mathbf{L}}\mathbf{R}\Gamma( \mathcal{O}_Z)&\simeq \Kos(T_1,\ldots, T_n; S)\otimes_S^{\mathbf{L}}\mathbf{R}\Gamma( \mathcal{O}_Z)\\
        &\simeq (\Kos(T_1,\ldots, T_n; S)\otimes_S^{\mathbf{L}}R)\otimes_R^{\mathbf{L}}\mathbf{R}\Gamma( \mathcal{O}_Z)\\
        &\simeq \Kos(x_1,\ldots, x_n; R)\otimes_R^{\mathbf{L}}\mathbf{R}\Gamma( \mathcal{O}_Z).
    \end{align*}
    Given that $\mathbf{R}\Gamma( \mathcal{O}_Z)$ is a maximal Cohen-Macaulay $R$-complex, we have that
    \[
        H_i(S/\frb\otimes_S^{\mathbf{L}}\mathbf{R}\Gamma( \mathcal{O}_Z)) = 
        H_i(\Kos(\underline{x}; R) \otimes_R^{\myL}\myR\Gamma(\cO_Z))=0
    \]
    for all $i>0$.
    Additionally, since $C$ has a finite filtration whose successive quotients are $S/\frb$, it follows from a standard chase that 
    $$H_i(C\otimes_S^{\mathbf{L}}\mathbf{R}\Gamma( \mathcal{O}_Z))=0$$
    for all $i>0$.

    Therefore, applying the functor $-\otimes_S^{\mathbf{L}}\mathbf{R}\Gamma( \mathcal{O}_Z)$ to \autoref{eq.sec}, we see that the map 
    $$H_0(S/\frb^\lambda \otimes_S^{\mathbf{L}}\mathbf{R}\Gamma( \mathcal{O}_Z))\rightarrow \bigoplus_{\mathbf{a}\in \Lambda}H_0(S/(T_1^{a_1},\ldots, T_n^{a_n}) \otimes_S^{\mathbf{L}}\mathbf{R}\Gamma( \mathcal{O}_Z))$$
    is injective.

    As $\BE^\lambda(T_1,\ldots, T_n)$ is a free resolution of $S/\frb^\lambda$,
    \begin{align*}
        S/\frb^\lambda\otimes_S^{\mathbf{L}}\mathbf{R}\Gamma( \mathcal{O}_Z)&\simeq \BE^\lambda(T_1,\ldots, T_n)\otimes_S^{\mathbf{L}}\mathbf{R}\Gamma( \mathcal{O}_Z)\\
        &\simeq \BE^\lambda(x_1,\ldots, x_n)\otimes_R^{\mathbf{L}}\mathbf{R}\Gamma( \mathcal{O}_Z)
    \end{align*}    
    and, similarly,
    \begin{align*}
        S/(T_1^{a_1},\ldots, T_n^{a_n})\otimes_S^{\mathbf{L}}\mathbf{R}\Gamma( \mathcal{O}_Z)&\simeq \Kos(x_1^{a_1},\ldots, x_n^{a_n}; R)\otimes_R^{\mathbf{L}}\mathbf{R}\Gamma( \mathcal{O}_Z).
    \end{align*}
    Hence, the map
    $$H_0(\BE^\lambda(x_1,\ldots, x_n)\otimes_R^{\mathbf{L}}\mathbf{R}\Gamma( \mathcal{O}_Z))\rightarrow \bigoplus_{\mathbf{a}\in \Lambda}H_0(\Kos(x_1^{a_1},\ldots, x_n^{a_n}; R)\otimes_R^{\mathbf{L}}\mathbf{R}\Gamma( \mathcal{O}_Z))$$
    is injective and so we see that $\BEH(\underline{x}, \lambda)$ agrees with 
    \begin{align*}
        & \ker\big(R\rightarrow H_0(\BE^\lambda(x_1,\ldots, x_n)\otimes_R^{\mathbf{L}}\mathbf{R}\Gamma( \mathcal{O}_Z))\big)\\
        &=\bigcap_{\mathbf{a}\in \Lambda} \ker\big(R\rightarrow H_0(\Kos(x_1^{a_1},\ldots, x_n^{a_n}; R)\otimes_R^{\mathbf{L}}\mathbf{R}\Gamma( \mathcal{O}_Z))\big)\\
        &= \bigcap_{\mathbf{a}\in \Lambda} (x_1^{a_1},\ldots, x_n^{a_n})^{\KH}.
    \end{align*}
    This completes the proof.
\end{proof}

We now come to our main theorem of the section.

\begin{theorem}
    \label{thm.IntegralClosureMainResult}
    Suppose $R$ is an excellent Noetherian domain of equal characteristic zero with a dualizing complex and $x_1, \dots, x_n$ generate a nonzero parameter ideal $J \subsetneq R$ as in \autoref{def.ParameterIdealGeneral}.  Given an ideal $I$ with  $\sqrt{J} \supseteq I\supseteq J$, we have that $c_{\KH}^J(I) = n$ if and only if $\overline{I}=\overline{J}$.
\end{theorem}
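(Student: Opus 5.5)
The backward direction is formal. If $\overline{I}=\overline{J}$, then \autoref{prop.InitialProperties} \autoref{prop.InitialProperties.IntegralClosureAgnostic} gives $c^J_{\KH}(I)=c^J_{\KH}(\overline{I})=c^J_{\KH}(\overline{J})=c^J_{\KH}(J)$. This equals $n$ by \autoref{lem.ParameterDiagonal}.

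For the forward direction, assume $c^J_{\KH}(I)=n$; the goal is $\overline{I}\subseteq\overline{J}$. I first localize using \autoref{lem.Localization} and \autoref{lem.KHHirBasicProperties} \autoref{lem.KHHirBasicProperties.Localization}, so that $(R,\fram)$ is local and $\underline{x}$ is part of a system of parameters. Fix $f\in I$. It suffices to show $f^{\lambda+n-1}\in\overline{J^{\lambda}}$ for all $\lambda>0$: then $v(f)\ge \frac{\lambda}{\lambda+n-1}\min_i v(x_i)$ for every Rees valuation $v$ of $J$, and letting $\lambda\to\infty$ gives $f\in\overline{J}$. By \autoref{lem.intersectionBEH} and \autoref{eq.BEHInIntegralClosure}, it is enough to prove
\[
f^{\lambda+n-1}\in (x_1^{c_1},\dots,x_n^{c_n})^{\KH}\quad\text{for every } c_i\in\bZ_{>0} \text{ with } \textstyle\sum c_i=\lambda+n-1.
\]

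To produce these memberships I will use the characterization \autoref{prop.ConnectionWithKHForParameterIdeals} \autoref{prop.ConnectionWithKHForParameterIdeals.KHOnS} at $t=n$ (with $m>n$). It says that $(IS)_{\geq n}\subseteq (JS)^{\KH}$ for every finite domain extension $S\supseteq R$. I choose $S=R[x_1^{1/L},\dots,x_n^{1/L}]$ and put $y_i=x_i^{1/L}$. Since $y_i^L=x_i\in I$, each $y_i$ lies in $(IS)_{\geq 1/L}$. Multiplying fractional integral closure powers, for a power $f^\mu$ and integers $0\le b_i\le L$ we get
\[
f^{\mu}\prod_i y_i^{b_i}\in (IS)_{\geq \mu+\sum b_i/L}.
\]
Whenever $\mu+\sum_i b_i/L\ge n$, this element therefore lies in $((y_1^L,\dots,y_n^L)S)^{\KH}$. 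The elements $y_1^L,\dots,y_n^L$ are parameters in the local ring $S$. Colon capturing for KH closure, in the form \cite[Theorem 4.1]{EpsteinMcDonaldRGSchwede} as used in \autoref{lem.ParameterDiagonal}, then yields $f^\mu\in((y_1^{e_1},\dots,y_n^{e_n})S)^{\KH}$ with $e_i=L-b_i$. The constraint is $\sum_i e_i/L\le \mu$ with $0\le e_i\le L$.

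The main obstacle is the constraint $e_i\le L$, which only allows exponents $e_i/L\le 1$ on the $x_i$, whereas we need $x_i^{c_i}$ with $c_i$ possibly large. My first attempt will be to apply the previous paragraph not to $J$ but to the parameter ideal $J_{\mathbf c}=(x_1^{c_1},\dots,x_n^{c_n})$. For this I need $c^{J_{\mathbf c}}_{\KH}(I)\le \sum c_i$. I would try to obtain it from $c^J_{\KH}(I)=n$ by passing to a finite extension in which each $x_i^{c_i}$ is an $L$-th power. I would then compare thresholds using \autoref{prop.BehaviorUnderFinite} and the monotonicity in \autoref{prop.InitialProperties}, possibly replacing $I$ by $I+J_{\mathbf c}$ so that $\overline{I}$ and the hypothesis still control everything.

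If that comparison fails, the fallback is to use valuations directly. I would take the memberships $f^\mu\in((y^{\underline e})S)^{\KH}\subseteq\overline{(y^{\underline e})S}$ with $e_i/L$ close to $1$. Writing $v$ for any Rees valuation of $JS$, this gives $\mu\, v(f)\ge \min_i (e_i/L)\,v(x_i)$ under $\sum_i e_i/L\le\mu$. From this I would extract $v(f)\ge\min_i v(x_i)$ by letting $L\to\infty$ and restricting $v$ to $R$.

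Once $f^{\lambda+n-1}\in(x^{\mathbf c})^{\KH}$ is established for all admissible $\mathbf c$, I intersect back to $R$ using \autoref{lem.KHHirBasicProperties} \autoref{lem.KHHirBasicProperties.FiniteExtensions}. \autoref{lem.intersectionBEH} then places $f^{\lambda+n-1}$ in $\BEH(\underline{x},\lambda)\subseteq\overline{J^\lambda}$, which finishes the argument.
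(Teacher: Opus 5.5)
Your backward direction and your colon-capturing computation are correct, but neither of your two ways of finishing works.

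\textbf{The main route.} It needs $f^{\lambda+n-1}\in(x_1^{c_1},\dots,x_n^{c_n})^{\KH}$ with some $c_i\ge 2$. Everything your argument produces is membership in KH closures of ideals $(x_1^{e_1/L},\dots,x_n^{e_n/L})S$ with $e_i/L\le 1$, all of which contain $JS$. The ``first attempt'' has no mechanism behind it:
\begin{itemize}
\item since $J_{\mathbf c}\subseteq J$, \autoref{prop.InitialProperties} \autoref{prop.InitialProperties.BiggerJ} gives $c^{J_{\mathbf c}}_{\KH}(I)\ge c^{J}_{\KH}(I)$, which is the wrong direction;
\item \autoref{prop.BehaviorUnderFinite} only says the threshold is unchanged under finite extensions;
\item $I+J_{\mathbf c}=I$, so that replacement changes nothing;
\item without Frobenius there is no way to rescale $J$ into $J_{\mathbf c}$.
\end{itemize}
The bound $c^{J_{\mathbf c}}_{\KH}(I)\le\sum c_i$ is essentially as hard as the theorem itself.

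\textbf{The fallback.} This fails quantitatively. If $v(x_1)=\dots=v(x_n)=w$, then $\min_i e_i/L\le\frac1n\sum_i e_i/L\le\mu/n$. So every inequality $\mu\,v(f)\ge\min_i(e_i/L)\,w$ already holds once $v(f)\ge w/n$, and you can never get beyond $v(f)\ge w/n$. Passing to integral closure ideal by ideal loses a factor of $n$. Hochster's identity must be applied to the KH closures themselves; this is exactly where \autoref{lem.intersectionBEH} uses that $\myR\Gamma(\cO_Z)$ is Cohen--Macaulay. Only after that may one pass to integral closures.

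\textbf{The missing idea.} Apply \autoref{lem.intersectionBEH} inside $S$ to the roots $y_1,\dots,y_n$, not inside $R$ to the $x_i$.
\begin{itemize}
\item Your computation with $\mu=1$ and $b_i=L-e_i$ already shows $f\in((y_1^{e_1},\dots,y_n^{e_n})S)^{\KH}$ for every $n$-tuple of positive integers with $\sum_i e_i=L$.
\item That is exactly the right-hand side of \autoref{lem.intersectionBEH} for $\underline{y}$ with $\lambda=L-n+1$. Take $L\ge n$; note that $S$ is local and $\underline{y}$ is part of a system of parameters in $S$.
\item Hence $f\in\BEH(\underline{y},L-n+1)\subseteq\overline{(y_1,\dots,y_n)^{L-n+1}S}$ by \autoref{eq.BEHInIntegralClosure}.
\item So $v(f)\ge\frac{L-n+1}{L}\min_i v(x_i)$ for every valuation $v$ centered on $S$.
\item Restrict to extensions of the Rees valuations of $J$ and let $L\to\infty$ to get $f\in\overline{J}$.
\end{itemize}
This is the paper's proof. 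It needs neither $R$-level KH closures nor exponents larger than $1$ on the $x_i$, and your reduction to $f^{\lambda+n-1}\in\overline{J^{\lambda}}$ becomes unnecessary.
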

\begin{proof}
    As above, without loss of generality, we may assume that $(R, \fram)$ is local with $J \subseteq \fram$.  
    For the $(\Leftarrow)$ direction, if $\overline{I}=\overline{J}$, then $c_{\KH}^J(I)=c_{\KH}^J(\overline{I})=c_{\KH}^J(\overline{J})=c_{\KH}^J(J)=n$ by \autoref{lem.ParameterDiagonal}.

    Now assume that $c_{{\KH}}^J(I) = n$. By \autoref{prop.ConnectionWithKHForParameterIdeals}, $\overline{I^nS}\subseteq (JS)^{\KH}$ for all finite domain extensions $R\subseteq S$. It follows that $\overline{J^{n-1}S}\cdot\overline{IS}\subseteq\overline{I^{n-1}S}\cdot\overline{IS}\subseteq (JS)^{\KH}$ and so 
    $$\overline{IS} \subseteq ((JS)^{\KH}:_S \overline{J^{n-1}S}).$$

    Fix $t\in \bZ_{>n}$ and let $T = R[x_1^{1/t},\ldots, x_n^{1/t}] \subseteq R^+$.  As $R$ is local, so is $T$.  Furthermore, the maximal ideal $\frn$ of $T$ contains $(x_1^{1/t},\ldots, x_n^{1/t})$.  Now, $\overline{IT} \subseteq ((JT)^{\KH}:_T \overline{J^{n-1}T})$. Since $x_1^{a_1/t} \cdots x_n^{a_n/t}\in \overline{J^{n-1}T}$ for all $a_1,\ldots, a_n\in \bZ_{\geq 0}$ such that $a_1+\cdots+a_n= t(n-1)$, it follows that
    $$\overline{IT}\subseteq \bigcap_{a_1,\ldots, a_n}((JT)^{\KH}:_T x_1^{a_1/t} \cdots  x_n^{a_n/t})$$
    where $(a_1,\ldots, a_n)$ runs over all $n$-tuples of nonnegative integers such that $a_1+\cdots+a_n= t(n-1)$.
    By \cite[Theorem 4.1]{EpsteinMcDonaldRGSchwede},
    $$\overline{IT}\subseteq \bigcap_{a_1,\ldots, a_n} ((x_1^{\frac{t-a_1}{t}},\ldots, x_n^{\frac{t-a_n}{t}})T)^{\KH}$$
    where $(a_1, \ldots, a_n)$ runs over all $n$-tuples of nonnegative integers such that $a_1+\cdots+a_n= t(n-1)$ and $a_i<t$ for all i. As a consequence, 
    $$\overline{IT}\subseteq \bigcap_{c_1,\ldots, c_n} ((x_1^{\frac{c_1}{t}},\ldots, x_n^{\frac{c_n}{t}})T)^{\KH}$$
    where $(c_1, \ldots, c_n)$ runs over all $n$-tuples of positive integers such that $c_1+\cdots+c_n= t$. 
    As $x_1^{1/t}, \dots, x_n^{1/t} \in T$ form a partial system of parameters, \autoref{lem.intersectionBEH} implies that 
    $\overline{IT}\subseteq \BEH(\underline{x}^{1/t}, t - n + 1) \subseteq  \overline{(x_1^{1/t}, \dots, x_n^{1/t})^{t - n + 1}T}$.   
    By intersecting with $R$, and computing fractional integral closures valuatively, we see that   
    \[
        \overline{I} \subseteq J_{\geq {t - n + 1 \over t}}.
    \]
    Sending $t \to \infty$, we see that 
    \[  
        \overline{I} \subseteq \bigcap_{1 > \epsilon > 0}  J_{\geq 1 - \epsilon}.
    \]  
    But the right side is $\overline{J}$ since there are only finitely many Rees valuations computing these rational-power integral closures.
\end{proof}

\section{Jumping numbers}

In this section we compare derived thresholds to the jumping numbers of multiplier ideals in regular rings.  The main technical result, \autoref{ThresholdsAndTest} below, is analogous to \cite[Proposition 2.7]{MustataTakagiWatanabeFThresholdsAndBernsteinSato} or \cite[Proposition 2.29]{BlickleMustataSmithDiscretenessAndRationalityOfFThresholds} in characteristic $p > 0$ and to \cite[Proposition 4.0.5]{RodriguezBCMThresholdsHypersurfaces} or \cite[Proposition 5.4]{RodriguezSchwede.BCMThresholdsNonPrincipal} in mixed characteristic.

We begin with a lemma that should be viewed as an analog of \cite[Corollary 4.3.15]{DattaEpsteinTuckerMittagLefflerAndIntersectionFlatness} and is presumably well known to experts.  A key observation in the proof of this lemma, that there is a natural map $R/J \otimes_R^{\myL} R/J \to R/J \otimes_{R/J}^{\myL} R/J \cong R/J$, was suggested by ChatGPT 5.4 Pro.

\begin{lemma}
    \label{lem.WeirdMapInDerivedCatIsZero}
    Suppose $R$ is a Noetherian ring and $J \subseteq R$ is an ideal such that $J$ is isomorphic to a perfect complex (for instance, if $R$ is regular).  Then the canonical map 
    \[
        \myR\Hom_R(R/J, J) \to \myR\Hom_R(R/J, R)
    \]
    is zero in the derived category.
\end{lemma}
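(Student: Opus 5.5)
The plan is to reduce the statement to a general fact about objects of $D(R)$ that come from $D(R/J)$, using the perfectness of $J$ to move $J$ outside the $\myR\Hom$. Throughout, set $M := \myR\Hom_R(R/J, R)$.

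First, the tensor–Hom map. For any $A, P \in D(R)$ there is a natural map
\[
\myR\Hom_R(A, R) \otimes^{\myL}_R P \to \myR\Hom_R(A, P),
\]
natural in $P$. It is an isomorphism when $P$ is perfect: check it for $P = R$, then use triangles and direct summands. Apply this with $A = R/J$ to both $P = J$ and $P = R$, and use naturality with respect to the inclusion $\iota : J \to R$. This identifies the canonical map in the statement with
\[
M \otimes^{\myL}_R \iota : M \otimes^{\myL}_R J \to M \otimes^{\myL}_R R = M .
\]

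Second, tensor the short exact sequence $0 \to J \to R \to R/J \to 0$ with $M$. This gives a distinguished triangle
\[
M \otimes^{\myL}_R J \to M \xrightarrow{\ \alpha\ } M \otimes^{\myL}_R R/J \xrightarrow{+1}.
\]
If $\alpha$ has a left inverse in $D(R)$, then the first map is zero. Indeed, the composite $M \otimes^{\myL}_R J \to M \xrightarrow{\alpha} M \otimes^{\myL}_R R/J$ of two consecutive maps in a triangle is zero. Composing with a left inverse $\beta$ of $\alpha$ then kills the first map.

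Third, and this is the step I expect to need the most care, I construct the left inverse $\beta$. The functor $\myR\Hom_R(R/J, -)$ factors through $D(R/J)$, so $M$ is the restriction of scalars of some $N \in D(R/J)$. Then
\[
M \otimes^{\myL}_R R/J \cong N \otimes^{\myL}_{R/J} \big(R/J \otimes^{\myL}_R R/J\big).
\]
The natural multiplication map $\mu : R/J \otimes^{\myL}_R R/J \to R/J \otimes^{\myL}_{R/J} R/J \cong R/J$ lets us define $\beta := N \otimes^{\myL}_{R/J} \mu$.

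It remains to check that $\beta \circ \alpha = \mathrm{id}_M$. Under the identifications above, $\alpha$ is $N \otimes^{\myL}_{R/J}$ applied to the unit map $\eta : R/J \cong R \otimes^{\myL}_R R/J \to R/J \otimes^{\myL}_R R/J$. The composite $\mu \circ \eta$ is the identity of $R/J$, which can be checked on $H_0$ since both ends are $R/J$ in degree $0$. The main obstacle is making the identification of $\alpha$ with $N \otimes^{\myL}_{R/J} \eta$ rigorous. This is a compatibility between restriction of scalars and derived base change; I would verify it by taking a K-injective (or K-flat) model of $N$ over $R/J$, so that all maps are honest maps of complexes.

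Combining the three steps, $M \otimes^{\myL}_R \iota$ is zero, and hence so is $\myR\Hom_R(R/J, J) \to \myR\Hom_R(R/J, R)$.
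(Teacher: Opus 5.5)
Your proposal is correct and follows the paper's proof essentially step for step. You identify the map with $M \otimes^{\myL}_R J \to M$, use the triangle from $0 \to J \to R \to R/J \to 0$ to reduce to finding a left inverse of $M \to M \otimes^{\myL}_R R/J$, and build that left inverse by viewing $M$ over $R/J$ and using the multiplication map $R/J \otimes^{\myL}_R R/J \to R/J$. The only differences are minor: you get $\myR\Hom_R(R/J, J) \cong M \otimes^{\myL}_R J$ from the perfectness of $J$ directly, whereas the paper uses that $R/J$ is perfect (Stacks Tag 07VI), and your check of $\beta \circ \alpha = \mathrm{id}$ via the unit map is more explicit than the paper's.
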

\begin{proof}
    We may view $R/J$ as a perfect complex and hence 
    \[
        \myR\Hom_R(R/J, J) \cong \myR\Hom_R(R/J, R) \otimes^{\myL} J, 
    \]
    see \cite[\href{https://stacks.math.columbia.edu/tag/07VI}{Tag 07VI}]{stacks-project}.
    Hence, if $D = \myR\Hom_R(R/J, R)$, then we must show that $D \otimes_R^{\myL} J \to D \otimes_R^{\myL} R$ is zero.  It thus suffices to show that $D \otimes_R^{\myL} R \to D \otimes_R^{\myL} R/J$ has a left inverse.  But as $D$ can be viewed as a complex of $R/J$-modules, $D \otimes_R^{\myL} R/J \cong D \otimes_{R/J}^{\myL} R/J \otimes^{\myL}_R R/J$ and we have a canonical map $R/J \otimes^{\myL}_R R/J \to R/J$ and thus we have constructed a composition
    \[
        D \otimes_R^{\myL} R \to D \otimes_R^{\myL} R/J \to D \otimes_R^{\myL} R
    \]
    that is the identity as desired.
\end{proof}

We now prove the precise statement.

\begin{proposition}[\textit{cf.} \cite{MustataTakagiWatanabeFThresholdsAndBernsteinSato, BlickleMustataSmithDiscretenessAndRationalityOfFThresholds}]\label{ThresholdsAndTest}
	Suppose $R$ is a finite dimensional Gorenstein excellent domain of equal characteristic zero.
    Let $\fra, J \subseteq R$ be nonzero ideals with $\fra \subseteq \sqrt{J}$.
	\begin{enumerate}
		\item We have that 
        \[ 
            \mJ(R,\fra^{c^J_{\Hir}(\fra)})\subseteq J.
        \]         
        \label{ThresholdsAndTesta}
		\item If $R$ is regular, then for {any real $\alpha > 0$}, 
        \[   
            c_{\Hir}^{\mJ(R, \fra^\alpha)}\left(\fra\right)\leq \alpha.
        \]
        \label{ThresholdsAndTestb}
	\end{enumerate}
\end{proposition}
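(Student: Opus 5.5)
For part (a), set $c = c^J_{\Hir}(\fra)$. By \autoref{lem.InfsAreMins} the composition $R \to R/J \otimes^{\myL}_R \myR\Gamma(\cO_Z(\lfloor cD\rfloor))$ is zero in $D(R)$. I will dualize this using $\myR\Hom_R(-,R)$, with $R$ Gorenstein and $\omega_R\cong R$. The argument can be done locally, or one can note that $\mJ(R,\fra^c)\subseteq J$ may be checked after localizing at maximal ideals, where $R$ has a dualizing complex.

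The key step is to reformulate "the map to $R/J\otimes^{\myL} M$ is zero", where $M=\myR\Gamma(\cO_Z(\lfloor cD\rfloor))$. Consider the triangle $J\otimes^{\myL} M \to M \to R/J\otimes^{\myL} M \xrightarrow{+1}$. Vanishing of $R\to M\to R/J\otimes^{\myL}M$ means the map $R\to M$ lifts through $J\otimes^{\myL} M \to M$. Now take any $\phi: M\to R$. By \autoref{lem.DualOfMultiplierIdeal}, the composite $R\to M\xrightarrow{\phi} R$ is multiplication by some $r$, and every $r\in\mJ(R,\fra^c)$ arises this way. Using the lift, multiplication by $r$ factors as
\[R\to J\otimes^{\myL}M\xrightarrow{\mathrm{id}\otimes\phi} J\otimes^{\myL}R = J \to R.\]
Taking $H_0$, the image of $1$ lies in $J$, so $r\in J$. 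This is essentially a formal diagram chase, and I expect no serious obstacle.

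For part (b), assume $R$ is regular, set $J' = \mJ(R,\fra^\alpha)$, and let $M = \myR\Gamma(\cO_Z(\lfloor \alpha D\rfloor))$. First, $\fra\subseteq\sqrt{J'}$ holds since $\fra^{\lceil\alpha\rceil}\subseteq \mJ(R,\fra^\alpha)$. It then suffices to show that $R\to R/J'\otimes^{\myL} M$ is zero. Since $M$ is perfect ($R$ is regular), I plan to rewrite $R/J'\otimes^{\myL}M \cong \myR\Hom_R(\myR\Hom_R(M,R), R/J')$. By \autoref{lem.DualOfMultiplierIdeal} and the vanishing \autoref{eq.RelativeKVVanishing}, we have $\myR\Hom_R(M,R)\cong J'$, with the dual of $R\to M$ being the inclusion $J'\to R$. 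So the map in question becomes the composite $R=\myR\Hom(R,R)\to \myR\Hom(J',R)\to\myR\Hom(J',R/J')$, induced by $J'\hookrightarrow R\to R/J'$. On $H^0$ this sends $1$ to the composite $J'\to R\to R/J'$, which is zero as a map of modules. This does not, however, immediately give zero in the derived category, since we need the element to vanish in $\Hom_{D(R)}(J', R/J')$, which is $\Hom_R(J',R/J')$ because $J'$ is a module. Because $J'$ is a module in degree $0$ and $R/J'$ is a module, $\Hom_{D}(J',R/J') = \Hom_R(J',R/J')$, and the composite is genuinely zero. So the obstruction disappears.

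The main point to check carefully is the duality identification $R/J'\otimes^{\myL} M\cong \myR\Hom(\myR\Hom(M,R),R/J')$, compatibly with the maps from $R$. This uses perfection of $M$ and reflexivity ($M\cong \myR\Hom(\myR\Hom(M,R),R)$). If this is subtle, \autoref{lem.WeirdMapInDerivedCatIsZero} is the fallback: dualize the triangle for $J'$ instead and use that $\myR\Hom_R(R/J',J')\to\myR\Hom_R(R/J',R)$ is zero. Finally, if $\alpha$ is irrational, I will replace it by a slightly larger rational number without changing $\lfloor\alpha D\rfloor$.
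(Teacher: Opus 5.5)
Your proposal is correct. Part (a) is essentially the paper's argument: the paper applies $R/J\otimes^{\myL}\phi$ to the zero map $R\to R/J\otimes^{\myL}M$ and notes that the composite $R\to R/J$ sends $1\mapsto \bar r$. Your lift of $R\to M$ through $J\otimes^{\myL}M\to M$ is an equivalent repackaging of that step.

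Part (b) follows a genuinely different route. The paper proves a stronger statement, namely that $R/J'\to R/J'\otimes^{\myL}M$ is already zero. It does this by dualizing to $\myR\Hom_R(R/J',J')\to\myR\Hom_R(R/J',R)$ and invoking \autoref{lem.WeirdMapInDerivedCatIsZero}, whose proof needs the retraction built from $R/J'\otimes^{\myL}_R R/J'\to R/J'$.

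You instead use perfectness to identify $R/J'\otimes^{\myL}M\cong\myR\Hom_R(J',R/J')$ compatibly with the maps out of $R$. The inputs are naturality in $N$ of $\myR\Hom_R(P,R)\otimes^{\myL}N\cong\myR\Hom_R(P,N)$ for perfect $P$, biduality, and \autoref{lem.DualOfMultiplierIdeal}. Then, by \autoref{lem.ZeroVSCohomologicalZero}, you only need the image of $1$ in $H^0\myR\Hom_R(J',R/J')=\Hom_R(J',R/J')$. That image is the composite $J'\hookrightarrow R\to R/J'$, which is zero.

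What each route buys: yours is shorter and more elementary, and it makes transparent that, once $J'$ is perfect, the vanishing is just $R\cdot J'\subseteq J'$. The paper's route gives the stronger vanishing from $R/J'$ and isolates a general lemma of independent interest. Both use regularity in exactly the same place, the perfectness of $J'$. You also correctly check $\fra\subseteq\sqrt{J'}$, which the paper leaves implicit.

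Two minor points. Your hedging sentence about whether the composite is ``genuinely zero'' is unnecessary, since \autoref{lem.ZeroVSCohomologicalZero} settles it directly. And replacing an irrational $\alpha$ by a rational one is not needed: $c_{\Hir}$ is an infimum over real $t\geq 0$, and you have shown the map at $\lfloor \alpha D\rfloor$ itself vanishes.
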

\begin{proof}
    As $R$ is Gorenstein and finite dimensional, $R$ is a dualizing complex for itself and so we may assume that $K_R = 0$.  
    We first prove \autoref{ThresholdsAndTesta}.  Recall that $\mJ(R, \fra^{c^J_{\Hir}(\fra)})$ is equal to $\Gamma(\cO_Z(\lceil K_Z - c^J_{\Hir}(\fra) D \rceil ))$ where $Z \to \Spec R$ is a log resolution and $\fra \cO_Z = \cO_Z(-D)$.  

   We know that $R \to R/J \otimes^{\myL} \myR\Gamma(\cO_Z(\lfloor c^J_{\Hir}(\fra) D \rfloor))$ is zero.  Fix an element $r \in \mJ(R, \fra^{c^J_{\Hir}(\fra)})$.  Let $\phi : \myR\Gamma(\cO_Z(\lfloor c^J_{\Hir}(\fra) D \rfloor)) \to R$ be the corresponding map as in \autoref{lem.DualOfMultiplierIdeal}.  We have that 
   \[
    R \to R/J \otimes^{\myL} \myR\Gamma(\cO_Z(\lfloor c^J_{\Hir}(\fra) D \rfloor)) \xrightarrow{R/J \otimes^{\myL} \phi} R/J
   \]
   is zero since the first map is zero.  But the composition map is multiplication by $r$, and hence $r \in J$ as desired.


   For part \autoref{ThresholdsAndTestb}, set $J = \mJ(R, \fra^{\alpha})$ and notice that $M := \myR\Gamma(\cO_Z(\lfloor \alpha D \rfloor)) \cong \myR\Hom_R(J, R)$ is the Grothendieck dual of $J$ (\autoref{lem.DualOfMultiplierIdeal}).  We want to show $R \to (R/\mJ(R, \fra^{\alpha})) \otimes^{\myL}_R \myR\Gamma(\cO_Z(\lfloor \alpha D \rfloor)) = R/J \otimes_R^{\myL} M$ is zero.  It suffices to show that $R/J \to R/J \otimes_R^{\myL} M$ is zero.  
   But to show this, it suffices to show that its Grothendieck dual is zero, that is we must show that 
   \[
        \myR\Hom_R(R/J \otimes_R^{\myL} M, R) \to \myR\Hom_R(R/J, R) 
   \]
   is zero.  But $\myR\Hom_R(R/J \otimes_R^{\myL} M, R) \cong \myR\Hom_R(R/J, \myR\Hom_R(M, R)) \cong \myR\Hom_R(R/J, J)$ and so by \autoref{lem.WeirdMapInDerivedCatIsZero} we are done.
\end{proof}

\begin{remark}
    In the proof of \autoref{ThresholdsAndTestb} we use regularity to guarantee that $R/\mJ(R,\fra^{\alpha})$ has finite projective dimension.  If for some reason it had finite projective dimension without that hypothesis, the proof still works.
\end{remark}

We then obtain the following description of the Hironaka threshold purely in terms of the multiplier ideal in a regular ring.  Compare with \cite{MustataTakagiWatanabeFThresholdsAndBernsteinSato,RodriguezSchwede.BCMThresholdsNonPrincipal}.

\begin{corollary}
    \label{cor.HironakaThresholdViaMultiplierIdeal}
    Suppose $R$ is an excellent regular domain of equal characteristic zero with a dualizing complex.
    Let $\fra, J \subseteq R$ be nonzero ideals with $\fra \subseteq \sqrt{J}$.  Then 
    \[
        c_{\Hir}^J(\fra) = \min \{ t \geq 0 \;|\; \mJ(R, \fra^t) \subseteq J \}.
    \]
    Furthermore, if $J$ is proper, then it coincides with $\sup \{ t \geq 0 \;|\; \mJ(R, \fra^t) \not\subseteq J \}$.
\end{corollary}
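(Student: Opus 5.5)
The plan is to combine the two parts of \autoref{ThresholdsAndTest} with the fact that the infimum defining $c^J_{\Hir}(\fra)$ is attained (\autoref{lem.InfsAreMins}). A regular ring with a dualizing complex is Gorenstein. It is also finite dimensional: locally Gorenstein rings with a dualizing complex have bounded dimension, or we may reduce to the local case via \autoref{lem.Localization}, applied to the threshold and, since the multiplier ideal localizes, to $\mJ$ as well. Hence \autoref{ThresholdsAndTest} applies. Set $c = c^J_{\Hir}(\fra)$, which is a finite rational number by \autoref{lem.UpperBound} and \autoref{lem.RationalThresholds}. Let $T = \{ t \geq 0 \;|\; \mJ(R, \fra^t) \subseteq J \}$.

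First, $c \in T$ by \autoref{ThresholdsAndTest}\autoref{ThresholdsAndTesta}, so $\inf T \leq c$. Conversely, suppose $t \in T$; I want to show $t \geq c$. When $t = 0$, we have $\mJ(R, \fra^0) = R \subseteq J$, so $J = R$ and $c = 0$ by \autoref{rem.CaseJ=R}. When $t > 0$, \autoref{ThresholdsAndTest}\autoref{ThresholdsAndTestb} gives $c^{\mJ(R,\fra^t)}_{\Hir}(\fra) \leq t$. Since $\mJ(R, \fra^t) \subseteq J$, \autoref{prop.InitialProperties}\autoref{prop.InitialProperties.BiggerJ} gives $c = c^J_{\Hir}(\fra) \leq c^{\mJ(R,\fra^t)}_{\Hir}(\fra)$. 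One must check the hypothesis $\fra \subseteq \sqrt{\mJ(R,\fra^t)}$, which holds because $\fra^{\lceil t\rceil + 1}$ or some power lies in $\mJ(R,\fra^t)$; more precisely, $\fra^{k} \subseteq \mJ(R, \fra^t)$ for $k \geq t$, since $\lceil K_Z - tD\rceil \geq -kD$ when $K_Z \geq 0$ on the regular $R$. Thus $c \leq t$, so $c = \min T$.

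For the supremum statement, assume $J$ is proper. Multiplier ideals decrease as $t$ increases, so $T$ is an upward-closed set of the form $[c, \infty)$ (it is closed because it contains its minimum $c$). Its complement in $[0,\infty)$ is therefore $[0, c)$. This complement is nonempty since $c > 0$ by \autoref{lem.UpperBound}, and its supremum is $c$.

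The main point needing care is the compatibility of hypotheses in \autoref{ThresholdsAndTest}(b), namely the radical containment and the finite dimensionality, which I handle as above. Everything else is formal.
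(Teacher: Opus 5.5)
Your proposal is correct and follows the paper's proof. The paper also uses \autoref{ThresholdsAndTest}(a), together with monotonicity of $\mJ(R,\fra^t)$ in $t$, to get that every $t \geq c^J_{\Hir}(\fra)$ lies in the set, and it combines part (b) with \autoref{prop.InitialProperties} \autoref{prop.InitialProperties.BiggerJ} for the reverse inequality. Your extra checks fill in details the paper leaves implicit: finite dimensionality, $\fra \subseteq \sqrt{\mJ(R,\fra^t)}$, the case $t=0$ via $\mJ(R)=R$, and the supremum statement.
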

\begin{proof}
    If $J = R$, then by \autoref{rem.CaseJ=R} there is nothing to show.  Thus we assume $J$ is proper.
    If $t \geq c_{\Hir}^J(\fra)$ then $\mJ(R, \fra^t)\subseteq \mJ(R, \fra^{c^J_{\Hir}(\fra)}) \subseteq J$ by \autoref{ThresholdsAndTest} \autoref{ThresholdsAndTesta}.  Conversely, suppose that $\mJ(R, \fra^t) \subseteq J$.  Then by \autoref{ThresholdsAndTest} \autoref{ThresholdsAndTestb} and \autoref{prop.InitialProperties} \autoref{prop.InitialProperties.BiggerJ}
    \[
        t \geq c_{\Hir}^{\mJ(R, \fra^t)}(\fra) \geq c_{\Hir}^J(\fra)
    \]
    completing the proof.
\end{proof}

We also obtain that the set of thresholds equals the set of jumping numbers in a regular ring.

\begin{corollary}[{\cf \cite{MustataTakagiWatanabeFThresholdsAndBernsteinSato,
    BlickleMustataSmithDiscretenessAndRationalityOfFThresholds}}]
    \label{cor.JumpingEqualThresholds}
    Suppose $R$ is an excellent regular domain of equal characteristic zero with a dualizing complex.  Suppose that $\fra \subseteq R$ is a proper nonzero ideal.  Then the set of jumping numbers for $\fra$ is the same as the set of derived Hironaka thresholds for $\fra$ as the proper ideals $J$, such that $\fra \subseteq \sqrt{J}$, vary.
\end{corollary}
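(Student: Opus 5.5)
We establish the two inclusions separately, using \autoref{cor.HironakaThresholdViaMultiplierIdeal} to rewrite each derived Hironaka threshold in terms of multiplier ideals. Recall that a jumping number of $\fra$ is a real $\alpha > 0$ with $\mJ(R, \fra^{\alpha}) \neq \mJ(R, \fra^{\alpha - \epsilon})$ for all $\epsilon > 0$. Recall also the standard facts: $\mJ(R,\fra^t)$ is decreasing in $t$, right continuous (that is, $\mJ(R,\fra^t) = \mJ(R,\fra^{t+\epsilon})$ for $0<\epsilon \ll 1$, since $\lfloor tD\rfloor$ is locally constant on the right), and jumps only at a discrete set of rationals. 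These facts follow directly from the log resolution description in \autoref{def.MultiplierModuleAndIdeal}, exactly as in \autoref{lem.RationalThresholds} and \autoref{lem.InfsAreMins}.

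\emph{Thresholds are jumping numbers.} Let $J$ be proper with $\fra \subseteq \sqrt{J}$, and set $c = c^J_{\Hir}(\fra)$. By \autoref{lem.UpperBound}, $c$ is finite and positive. By \autoref{cor.HironakaThresholdViaMultiplierIdeal}, $c$ is the minimum of the $t$ with $\mJ(R,\fra^t)\subseteq J$. Hence $\mJ(R,\fra^{c})\subseteq J$, while $\mJ(R,\fra^{c-\epsilon})\not\subseteq J$ for every $0<\epsilon \leq c$. Therefore $\mJ(R,\fra^{c-\epsilon}) \neq \mJ(R,\fra^c)$ for all small $\epsilon>0$, so $c$ is a jumping number.

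\emph{Jumping numbers are thresholds.} Let $\alpha>0$ be a jumping number and set $J = \mJ(R,\fra^{\alpha})$. We must check that $J$ is an admissible choice. First, $J$ is nonzero, since multiplier ideals of nonzero ideals in a domain are nonzero. Second, $J$ is proper: if $\mJ(R,\fra^{\alpha}) = R$, then by monotonicity $\mJ(R,\fra^{\alpha-\epsilon}) = R$ for all small $\epsilon$, so $\alpha$ would not jump. Third, $\fra \subseteq \sqrt{J}$. Here one uses the standard containment $\fra^{\lceil \alpha\rceil} \subseteq \mJ(R,\fra^{\alpha})$, or more simply $\fra^{k}\subseteq \mJ(R,\fra^\alpha)$ for $k \geq \alpha$. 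This holds because for $g\in\fra^k$ we have $\Div_Z(g)\geq kD \geq \lfloor \alpha D\rfloor$, and $K_Z\geq 0$ relative to $R$ regular (with $K_R=0$), so $g \in \Gamma(\cO_Z(\lceil K_Z-\alpha D\rceil))$.

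With $J$ admissible, \autoref{ThresholdsAndTest}\autoref{ThresholdsAndTestb} gives $c^J_{\Hir}(\fra)\leq\alpha$. For the reverse inequality, if $t<\alpha$ then $\mJ(R,\fra^t) \supseteq \mJ(R,\fra^{t'})$ for every $t \leq t' < \alpha$. Since $\alpha$ is a jumping number, for $t'$ close to $\alpha$ this ideal strictly contains $J$, and hence $\mJ(R,\fra^t)\not\subseteq J$. By \autoref{cor.HironakaThresholdViaMultiplierIdeal} we conclude $c^J_{\Hir}(\fra) = \alpha$.

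The main point needing care is the verification that $J=\mJ(R,\fra^\alpha)$ is proper and satisfies $\fra\subseteq\sqrt J$; these are exactly the hypotheses under which the threshold is defined and \autoref{cor.HironakaThresholdViaMultiplierIdeal} applies. The containment $\fra^k\subseteq\mJ(R,\fra^\alpha)$ is where regularity ($K_{Z/R}\geq 0$ effective) is used. The other subtle point is the strict containment $\mJ(R,\fra^{t})\supsetneq J$ just below $\alpha$. This relies on discreteness of the jumping numbers, so that the jump at $\alpha$ is witnessed by every $t'<\alpha$ sufficiently close to $\alpha$, together with monotonicity for smaller $t$.
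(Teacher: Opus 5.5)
Your proof is correct and is essentially the paper's argument: both directions come from \autoref{ThresholdsAndTest} and \autoref{cor.HironakaThresholdViaMultiplierIdeal}, and $J=\mJ(R,\fra^{\alpha})$ realizes a jumping number $\alpha$ as a threshold. Your explicit checks that this $J$ is proper with $\fra\subseteq\sqrt{J}$, and that $c>0$, fill in points the paper leaves implicit; the appeal to discreteness is unnecessary, though, since under your definition of jumping number monotonicity alone gives $\mJ(R,\fra^{t})\supsetneq J$ for every $t<\alpha$.
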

\begin{proof}
    Suppose first that $\alpha > 0$ is a jumping number for $\fra$, so that $\mJ(R, \fra^{\alpha}) \neq \mJ(R, \fra^{\alpha - \epsilon})$ for all $\alpha > \epsilon > 0$.  Then by \autoref{ThresholdsAndTest} \autoref{ThresholdsAndTestb}, $c_{\Hir}^{\mJ(R, \fra^\alpha)}\left(\fra\right)\leq \alpha.$ Set $\alpha'=c_{\Hir}^{\mJ(R, \fra^\alpha)}(\fra)$.
    Then, $\mJ(R,\fra^{\alpha})\subseteq\mJ(R,\fra^{\alpha'})$
    and, by \autoref{ThresholdsAndTest} \autoref{ThresholdsAndTesta}, $\mJ(R,\fra^{\alpha'})\subseteq \mJ(R,\fra^{\alpha})$. Thus, 
    $\mJ(R,\fra^{\alpha'})= \mJ(R,\fra^{\alpha})$ and, since $\alpha$ is a jumping number for $\fra$, it follows that $\alpha=\alpha'=c_{\Hir}^{\mJ(R, \fra^\alpha)}(\fra)$.

    Now let $c=c_{\Hir}^{J}(\fra)$ for some ideal $J$ and suppose that $\mJ(R, \fra^{c'}) = \mJ(R, \fra^{c})$ for some $c'<c$.
    By \autoref{ThresholdsAndTest} \autoref{ThresholdsAndTesta}, $\mJ(R, \fra^{c'}) = \mJ(R, \fra^{c})\subseteq J.$ By \autoref{cor.HironakaThresholdViaMultiplierIdeal}, it follows that $c'\geq c_{\Hir}^{J}(\fra)=c.$
\end{proof}

\subsection{Application to multiplicity}

We also obtain an analog of \cite[Theorem 5.6]{HunekeMustataTakagiWatanabeFThresholdsTightClosureIntClosureMultBounds}, see also \cite[Theorem 6.3]{RodriguezSchwede.BCMThresholdsNonPrincipal}.  The proof is essentially the same as those sources.

\begin{proposition}
    \label{prop.MultiplicityBound}
    Suppose $(R, \fram)$ is a $d$-dimensional excellent regular local ring of equal characteristic zero with a dualizing complex, $\fram = (x_1, \dots, x_d)$, and $J = (x_1^{a_1}, \dots, x_d^{a_d})$ for some positive integers $a_i$.  If $\fra$ is an $\fram$-primary ideal then 
    \[
        e(\fra) \geq \left( {{d \over c_{\Hir}^J(\fra)}} \right)^d e(J)
    \]
    where $e(-)$ denotes Hilbert-Samuel multiplicity.
\end{proposition}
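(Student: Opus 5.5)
We follow the strategy of \cite[Theorem 5.6]{HunekeMustataTakagiWatanabeFThresholdsTightClosureIntClosureMultBounds}, with the multiplier ideal description of the threshold from \autoref{cor.HironakaThresholdViaMultiplierIdeal} replacing the Frobenius powers. Set $c = c_{\Hir}^J(\fra)$, which is positive and finite by \autoref{lem.UpperBound}. By \autoref{cor.HironakaThresholdViaMultiplierIdeal}, $\mJ(R, \fra^{c}) \subseteq J$.

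The first step is to pass from multiplier ideals to integral closures. By the Brian\c{c}on--Skoda type containment for multiplier ideals in a $d$-dimensional regular ring, $\overline{\fra^{k}} \subseteq \mJ(R, \fra^{k - d + 1})$ for integers $k \geq d$. Replacing $\fra$ by $\fra^l$ and using \autoref{prop.InitialProperties} \autoref{prop.InitialProperties.PowersOfa}, we see that $c_{\Hir}^J(\fra^l) = c/l$. Hence, for integers $l, k$ with $(k - d + 1) \geq lc$, we get
\[
    \overline{\fra^{lk}} \subseteq \mJ(R, \fra^{l(k-d+1)}) \subseteq \mJ(R, \fra^{lc}) \cdots
\]
A cleaner formulation is the following. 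For each integer $N$, take $q = \lceil N c \rceil + d - 1$. Then $\fra^{q} \subseteq \overline{\fra^{q}} \subseteq \mJ(R, \fra^{q-d+1}) \subseteq \mJ(R, \fra^{Nc})$. Using subadditivity-free monotonicity and the summation formula $\mJ(\fra^{Nc}) \subseteq \mJ(\fra^{c})^{?}$ does not work directly, so instead we compare with $J^{[N]}:=(x_1^{Na_1}, \dots, x_d^{Na_d})$. Applying \autoref{prop.InitialProperties} and \autoref{prop.BehaviorUnderFinite} to the finite extension $R \subseteq R' = R[x_1^{1/N}, \dots, x_d^{1/N}]$ is awkward, so we instead use the flat endomorphism $\phi_N : R \to R$ sending $x_i \mapsto x_i^N$ (after completing, which is harmless for multiplicities and multiplier ideals). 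Formation of multiplier ideals commutes with flat base change of this type, hence $c_{\Hir}^{J^{[N]}}(\fra R) \le$ ... This is the step where characteristic $p$ uses Frobenius, and it is the main obstacle.

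The route I would actually commit to avoids Frobenius entirely. I would use the well-known multiplicity bound: if $\mJ(R, \fra^{c}) \subseteq J$ with $J$ a monomial parameter ideal, then for large $N$, $\mJ(\fra^{Nc + d - 1 + \dots})$ lies in $\overline{J^{N}}$-type ideals via the valuative criterion. Concretely, the Rees valuations of $J$ are monomial, $\mJ$ can be tested against $\ord_E$ for divisors $E$ over $\fram$, and $J \supseteq \mJ(\fra^c)$ forces, for the monomial valuation $v$ with weights $1/a_i$, the inequality $c\, v(\fra) \geq d - (\text{log discrepancy correction})$. In other words, $v(\fra) \geq d/c$ after normalizing $v(J) = 1$. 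Then $\fra \subseteq \overline{J^{d/c}}$ in the fractional sense, i.e.\ $\fra \subseteq J_{\geq d/c}$. Multiplicity is monotone and $e(J_{\geq s}) \ge$ wait, we need the reverse: $\fra \subseteq J_{\geq d/c}$ gives $e(\fra) \geq e(J_{\geq d/c}) = (d/c)^d e(J)$. The last equality holds because $J_{\geq s}$ for a monomial ideal is the monomial ideal of the scaled Newton polyhedron, whose covolume scales by $s^d$. The key obstacle is justifying $v(\fra)\ge d/c$ from $\mJ(R,\fra^c)\subseteq J$. For this I would use the monomial element $x_1^{a_1-1}\cdots x_d^{a_d-1}\notin J$, together with the log discrepancy $A(v)=\sum 1/a_i\cdot a_i\cdot$(normalized) $=d$ of the toric valuation. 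Then Howald-type reasoning shows $\mJ(\fra^c)\subseteq J$ forces $c\,v(\fra)\ge A(v)=d$ with $v(J)=1$.
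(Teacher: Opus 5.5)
Your first step, reducing to $\mJ(R,\fra^{c})\subseteq J$ via \autoref{cor.HironakaThresholdViaMultiplierIdeal}, matches the paper. The step you then commit to is false, not merely unproven.

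Let $v$ be the monomial valuation with $v(x_i)=1/a_i$. Then $v(J)=1$ and its log discrepancy is $A(v)=\sum_i 1/a_i$, not $d$. Testing at $v$ alone gives $\mJ(R,\fra^{t})\subseteq\{f : v(f)>t\,v(\fra)-\sum_i 1/a_i\}$. The right side lies in $J$ as soon as $t\,v(\fra)\geq d$, because a monomial $x^{u}$ with all $u_i\le a_i-1$ has $v$-value at most $d-\sum_i 1/a_i$. Hence one always has $c\,v(\fra)\le d$, and your claim $c\,v(\fra)\geq d$ would force equality for every $\fra$.

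This fails already for $R=\bQ[x,y]_{(x,y)}$, $J=\fram$, $\fra=(x,y^{2})$. Howald's formula gives $c=c^{\fram}_{\Hir}(\fra)=3/2$, while $v(\fra)=1$. So $c\,v(\fra)=3/2<2$, and $x\in\fra$ is not in $\fram_{\geq 4/3}$. The containment $\fra\subseteq J_{\geq d/c}$ is therefore much stronger than the proposition: for $J=\fram$ it would say the log canonical threshold times the order of $\fra$ always equals $d$.

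The underlying error is one of direction. A single valuation can only certify that an element is \emph{not} in a multiplier ideal. The identity $v(x_1^{a_1-1}\cdots x_d^{a_d-1})+A(v)=d$ says this monomial passes the test at $v$, not that it lies in $\mJ(R,\fra^{c})$. To exploit $\mJ(R,\fra^{c})\subseteq J$ you need a valuation witnessing $x_1^{a_1-1}\cdots x_d^{a_d-1}\notin\mJ(R,\fra^{c})$, and that witness depends on $\fra$.

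The paper's sketch, following \cite[Theorem 5.6]{HunekeMustataTakagiWatanabeFThresholdsTightClosureIntClosureMultBounds} and \cite[Theorem 6.3]{RodriguezSchwede.BCMThresholdsNonPrincipal}, supplies exactly the two missing ingredients.
\begin{enumerate}
\item \emph{Monomial $\fra$.} Howald's formula produces the witness as a monomial valuation $w$. It comes from a hyperplane separating $(a_1,\dots,a_d)$ from the interior of $c\,P(\fra)$, where $P(\fra)$ is the Newton polyhedron, so $\sum_i w_ia_i\le c\,w(\fra)$. The complement of $P(\fra)$ contains the simplex $\{u\in\bR^d_{\geq 0} : \sum_i w_iu_i<w(\fra)\}$, of volume $w(\fra)^d/(d!\prod_i w_i)$. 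AM--GM applied to the numbers $w_ia_i$ then yields $e(\fra)\geq (d/c)^d\prod_i a_i=(d/c)^d e(J)$. Your weights $1/a_i$ are just one possible $w$, and AM--GM is what handles an arbitrary one.
\item \emph{General $\fra$.} One degenerates $\fra$ and its powers to monomial ideals. Colengths are preserved, and the threshold relative to the monomial ideal $J$ cannot increase under the degeneration. The latter is a semicontinuity statement for multiplier ideals, which is where the paper invokes the restriction theorem (valid in this generality by \cite{Murayama.RelativeVanishingForQSchemes}).
\end{enumerate}
Your proposal has neither the degeneration step nor the AM--GM step, so it does not prove the statement.

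A minor point: $e(J_{\geq s})=s^d e(J)$ fails for non-integral $s$. Only the inequality $\geq$ holds, though that is all you would have needed.
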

\begin{proof}
    The proof follows closely the argument of \cite[Section 6]{RodriguezSchwede.BCMThresholdsNonPrincipal}, key points of which are due to L.~Ma.  That argument is very similar to \cite[Theorem 5.6]{HunekeMustataTakagiWatanabeFThresholdsTightClosureIntClosureMultBounds}, see also \cite[Lemma 2.3]{deFernexEinMustata.BoundsOnLCThresholdsWithApps} and \cite[Proposition 1.2]{deFernex.LengthMultiplicityAndMultiplier}.  Hence, we only quickly describe why it holds.  The first input is that $c_{\Hir}^J(\fra)$ is computed by determining whether or not $\mJ(R, \fra^t) \subseteq J$ (\autoref{cor.HironakaThresholdViaMultiplierIdeal}).  Then, following the argument of \cite[Theorem 6.3]{RodriguezSchwede.BCMThresholdsNonPrincipal} and  replacing the mixed characteristic test ideal with the multiplier ideal, one only needs certain formal properties of the multiplier ideal to hold.  The only necessary property that does not immediately follow from resolution of singularities (\cite{TemkinDesingularizationOfQuasiExcellentCharZero}) is the restriction theorem for multiplier ideals.  However, the usual argument to prove the restriction theorem still applies in our generality thanks to \cite{Murayama.RelativeVanishingForQSchemes}.  One also uses Howald's formula for multiplier ideals  (\cite{Howald.MultiplierIdealOfMonomial}) applied on the associated graded ring after deformation to the monomial ideal case (see the references above).  We can do this because the associated graded ring is a polynomial ring over a field.  
    
    Alternatively, in the case that $k$ is a field of characteristic zero and $R$ is the localization at a maximal ideal of a finite type $k$-algebra, one can use reduction to characteristic $p \gg 0$ and the results in the next section.
\end{proof}

\section{Comparison with $F$-thresholds}
\label{sec.ComparisonWithFThresholds}

In this section we study our invariants under reduction modulo $p \gg 0$ and compare them with their characteristic $p > 0$ counterparts.

\begin{proposition}
    \label{prop.EasyComparisonReductionModuloP}
    Suppose $R$ is of finite type over a field of characteristic zero $k$ and that $R$ is geometrically integral over $k$, $\fra, J = (x_1, \dots, x_n) \subseteq R$ are proper nonzero ideals, and $\fra \subseteq \sqrt{J}$.  Suppose $A \subseteq k$ is a finitely generated $\bZ$-algebra and $R_A \subseteq R$ is a finite type $A$-algebra and $J_A \subseteq R_A, \fra_A \subseteq R_A$ are ideals with the following properties: the canonical map $R_A \otimes_A k \to R$ is an isomorphism that identifies $J_A \otimes_A k$ with $J$ and identifies $\fra_A \otimes_A k$ with $\fra$.  For each $\lambda \in \mSpec A$, consider the base change $R_{\lambda} = R_A \otimes_A k(\lambda)$, $J_{\lambda}$ the image of $J_A \otimes_A k(\lambda)$ in $R_{\lambda}$, and likewise with $\fra_{\lambda}$.  
    
    Then there is a dense open subset $U \subseteq \mSpec A$ such that $\fra_{\lambda} \subseteq \sqrt{J_{\lambda}}$ and 
    \[
        c^{J_\lambda}_{+}(\fra_{\lambda}) \leq c^J_{\KH}(\fra)
    \]
    for all $\lambda \in U$ (in particular, $c^{J_\lambda}_{*}(\fra_{\lambda}) \leq c^J_{\KH}(\fra)$ by \autoref{lem.ComparisonOfCharpThresholds}).  Finally, if $R$ is Kawamata log terminal, we may pick $U$ as above so that $c^{J_{\lambda}}(\fra_{\lambda}) \leq c^J_{\KH}(\fra)$ for all $\lambda \in U$.
\end{proposition}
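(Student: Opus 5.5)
Set $c = c^J_{\KH}(\fra)$, a rational number by \autoref{lem.RationalThresholds}, and fix an integer $m > c$. Choose $h = \prod_{i=1}^m h_i$ as in \autoref{cor.GeneralElementChoiceNew}. By \autoref{prop.ConnectionWithKHForParameterIdeals} \autoref{prop.ConnectionWithKHForParameterIdeals.KHOnSGeneralElement}, $h^{c/m} \in (JS)^{\KH}$ for a single finite domain extension $S = R[h^{1/N}]$, where $N$ is a common denominator. The membership of $\fra$ in $\sqrt{J}$ is witnessed by a single relation $\fra^l \subseteq J$, which spreads out. After enlarging $A$, we may also spread out $S_A$, $h_A$, $h_A^{c/m} \in S_A$, and a log resolution $Y_A \to \Spec S_A$. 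We also spread out an explicit witness that $h^{c/m}$ maps to zero in $H_0(\Kos(\underline{x};S)\otimes^{\myL}\myR\Gamma(\cO_Y))$: a Čech cochain expressing it as a boundary. The key input is \cite{EpsteinMcDonaldRGSchwede}, which shows that Koszul-Hironaka closure reduces into plus closure (or tight closure) for $p \gg 0$ for arbitrary ideals. Concretely, for $\lambda$ in a dense open $U$, $\myR\Gamma(\cO_{Y_\lambda})$ maps to $S_\lambda^+$ (the vanishing of $H^{>0}$ in $R^+$ \cite{BhattDerivedDirectSummand}). This gives $h_\lambda^{c/m} \in J_\lambda S_\lambda^+ \subseteq J_\lambda R_\lambda^+$.

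We then pass from the general element to all of $(\fra_\lambda R_\lambda^+)_{>c}$. This is the step I expect to be the main obstacle, since the generality of $h$ is a characteristic zero statement. First I would try a direct valuation argument that avoids generality. Let $t > c$ be rational and $g \in (\fra_\lambda R_\lambda^+)_{>t'}$ with $c < t' < t$. We want $g \in J_\lambda R^+_\lambda$. Write $g$ in a finite extension $T$, and replace $S$ by a finite extension containing $T$'s reduction compatibly; this is awkward because $T$ depends on $\lambda$. A better route is to apply the argument at the level of divisors. Take a log resolution $Z_A$ of $(R_A,\fra_A)$ with $\fra\cO_Z = \cO_Z(-D)$, which stays a log resolution for $\lambda \in U$, and spread out the vanishing of $R \to \Kos(\underline{x})\otimes^{\myL}\myR\Gamma(\cO_Z(\lfloor cD\rfloor))$. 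For any $g$ in a finite extension $T_\lambda$ of $R_\lambda$ with $g \in (\fra_\lambda T_\lambda)_{\geq c}$, take a normalized blowup $W \to \Spec T_\lambda$ dominating $Z_\lambda$. Then $\Div_W(g) \geq c D_W \geq$ the pullback of $\lfloor cD\rfloor$, so multiplication by $g$ factors through $\myR\Gamma(Z_\lambda, \cO(\lfloor cD\rfloor)) \to \myR\Gamma(W,\cO_W) \to R^+$. Here the last map exists since $\myR\Gamma(W^+,\cO) \simeq R^+$ for blowups over $R^+$ \cite{BhattDerivedDirectSummand}. Tensoring with $\Kos(\underline{x})$ and using that the first map is zero mod $p$ places $g \in H_0(\Kos(\underline{x};R^+)) = R^+/JR^+$ at zero, so $g \in JR^+$. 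This also removes the need for $h$. Hence $(\fra_\lambda R^+_\lambda)_{>c} \subseteq J_\lambda R^+_\lambda$ and $c_+^{J_\lambda}(\fra_\lambda) \leq c$.

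Concretely, the only delicate points are that the zero map $R \to \Kos\otimes^{\myL}\myR\Gamma(\cO_Z(\lfloor cD\rfloor))$ spreads out and reduces to a zero map over $U$. This holds by generic flatness and base change for the finitely many coherent cohomologies involved, since zeroness is witnessed by a single element of $H_0$ vanishing. The $\lambda$-independence of $U$ must also be checked: $U$ depends only on $Z_A$, not on $g$.

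For the Kawamata log terminal statement, KLT type implies strongly $F$-regular type for $\lambda$ in a dense open set, hence weakly $F$-regular. Then \autoref{lem.ComparisonOfCharpThresholds} gives $c^{J_\lambda} = c^{J_\lambda}_+$, and the inequality follows.
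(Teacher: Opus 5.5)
Your final, divisor-level argument is essentially the paper's proof: spread out the vanishing of $R \to \Kos_{\bullet}(\underline{x}) \otimes^{\myL} \myR\Gamma(\cO_Z(\lfloor cD \rfloor))$ by generic flatness and base change. Then, for $f \in (\fra_\lambda R_\lambda^+)_{>c}$, factor multiplication by $f$ through $\myR\Gamma(\cO_{Z'}(\lfloor cD' \rfloor))$ on a normal alteration $Z'$ of a finite extension containing $f$, and map to $R_\lambda^+$ via Bhatt. So the detour through $h$ in your first paragraph is unnecessary, and, as you observe, it would not control all of $(\fra_\lambda R_\lambda^+)_{>c}$. The one point you skip is that spreading out $Z$ may force enlarging $A$ to some $A'$. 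The paper checks that the conclusion descends to $\mSpec A$: $R_\lambda \subseteq R_{\lambda'} \subseteq R_\lambda^+$ leaves $c_+$ unchanged, and images of dense opens under the flat map $\mSpec A' \to \mSpec A$ are dense open.
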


The geometrically integral hypothesis guarantees that the mod-$p$-reduction remains a domain.  One could alternately work one component at a time modulo $p \gg 0$ but that complicates the statement.  Of course, if $R$ is normal, one can alternately pass to a finite extension $k \subseteq k'$ so that the base change $R_{k'}$ is a product of geometrically integral domains, and then work on those one at a time.

\begin{proof}
    Set $c := c^J_{\KH}(\fra)$ and recall it is a rational number thanks to \autoref{lem.RationalThresholds}. 
    Set $\pi : Z \to X = \Spec R$ to be a log alteration of $(X, \fra)$ and write $\fra \cO_Z = \cO_Z(-D)$.  Then $R \to \Kos_{\bullet}(\underline{x}) \otimes^{\myL} \myR \Gamma( \cO_Z(\lfloor c D \rfloor))$ is zero by \autoref{lem.WeCanUseLogAlterationInstead} and \autoref{lem.InfsAreMins}.  

    For the moment, enlarge $A$ if necessary so that we can define a log alteration $\pi_A : Z_A \to X_A$ whose base change to $k$ is  $Z \to X$.  Likewise we set $\fra_A \cO_{Z_A} = \cO_{Z_A}(-D_A)$.  We will explain how to reduce to the original $A$ at the end of the proof.  
    After replacing $A$ by $A[g^{-1}]$ (for some nonzero $g \in A$) if necessary we can assume that $R_A$, $J_A = (x_1, \dots, x_n)$, $\fra_A$, $Z_A$, $D_A$, and $\myR \pi_{A*} \cO_{Z_A}(\lfloor cD_A \rfloor)$ base change to the corresponding terms in characteristic zero.  We can furthermore assume that the ring $R_A$, the ideals and their cokernels, as well as the terms of the complex, and the cohomology of the complex, are all flat over $A$.  Note it is harmless to identify the $x_i \in R_A$ generating $J_A$ with their images in $R$.  Furthermore, by considering the localization map 
    \[
        \begin{array}{rl}
            & \Hom_{D(R_A)}(R_A, \Kos_{\bullet}(\underline{x}) \otimes^{\myL}_{R_A} \myR \Gamma(\cO_{Z_A}(\lfloor c D_A \rfloor))) \\
            \to & \Hom_{D(R_A)}(R_A, \Kos_{\bullet}(\underline{x}) \otimes^{\myL}_{R_A} \myR \Gamma(\cO_{Z_A}(\lfloor c D_A \rfloor))) \otimes_A k\\
             \cong & \Hom_{D(R)}(R, \Kos_{\bullet}(\underline{x}) \otimes^{\myL}_{R} \myR \Gamma(\cO_{Z}(\lfloor c D \rfloor)))
        \end{array}
    \]
    and noting that the left term is simply $\myH^0(\Kos_{\bullet}(\underline{x}) \otimes^{\myL} \myR \Gamma(\cO_{Z_A}(\lfloor c D_A \rfloor)))$ which we may also assume is flat over $A$, we see we may assume that $R_A \to \Kos_{\bullet}(\underline{x}) \otimes^{\myL} \myR \Gamma(\cO_{Z_A}(\lfloor c D_A \rfloor))$ is zero as well.

    We then have that $\myR \Gamma(\cO_{Z_A}(\lfloor cD_A \rfloor))\otimes_A^{\myL} A/\lambda \cong \myR \Gamma(\cO_{Z_\lambda}(\lfloor cD_{\lambda} \rfloor))$.  Here $(-)_{\lambda}$ denotes the corresponding object base changed by $\otimes_A A/\lambda$, in other words it is  
    the base change to characteristic $p > 0$.  Hence 
    \[
        R_{\lambda} \to  \Kos_{\bullet}(\underline{x}) \otimes^{\myL} \myR \Gamma( \cO_{Z_{\lambda}}(\lfloor c D_{\lambda} \rfloor))
    \]
    is also zero (again, we identify the $x_i$ with their images).
    
    Now, pick $f \in (\fra_{\lambda} R_{\lambda}^+)_{> c}$.  Pick $R_{\lambda} \to R' \subseteq R_{\lambda}^+$ a finite domain extension with $f \in R'$.  Let $Z' \to X' = \Spec R'$ denote an alteration obtained by normalizing a component of $X' \times_{X_{\lambda}} Z_{\lambda}$ which dominates $X'$.  Set $\fra_{\lambda} \cO_{Z'} = \cO_{Z'}(-D')$ so that $D'$ is the pullback of $D_{\lambda}$. 
    We see that $\Div_{Z'} f \geq \lfloor c D' \rfloor$ 
    and hence we have that the multiplication-by-$f$-map factors as follows   
    \[
        \times f : \myR\Gamma(\cO_{Z'}) \to \myR\Gamma( \cO_{Z'} (\lfloor c D' \rfloor)) \to \myR\Gamma( \cO_{Z'}).
    \]
    Hence we obtain the commutative diagram:
    \[
        \xymatrix{
            R_{\lambda} \ar[d] \ar[r] & R' \ar[r] & \myR\Gamma( \cO_{Z'}) \ar[d] \ar[r]^{\times f} & \myR\Gamma(\cO_{Z'}) \\
            \myR\Gamma( \cO_{Z_{\lambda}}(\lfloor c D_{\lambda}\rfloor)) \ar[rr] & & \myR\Gamma( \cO_{Z'}(\lfloor c D' \rfloor)) \ar[ur]
        } 
    \]
    Hence we see that $R_{\lambda} \xrightarrow{1 \mapsto f} \Kos_{\bullet}(\underline{x}) \otimes^{\myL} \myR\Gamma( \cO_{Z'})$ is zero.  But we have a factorization $R_{\lambda} \to \Gamma(\cO_{Z'}) \to \myR\Gamma( \cO_{Z'}) \to R_{\lambda}^+$ by \cite{BhattDerivedDirectSummand}.  Hence 
    \[
    R_{\lambda} \xrightarrow{1 \mapsto f} \Kos_{\bullet}(\underline{x}) \otimes R_{\lambda}^+
    \]
    is zero.  Taking zeroth cohomology proves that $f \in J R_{\lambda}^+$ and so completes the proof for our potentially larger $A$ (taking $U$ to be the open subset of $\mSpec A$ obtained by inverting the various $g$ as above).
    

    Now we explain why the statement is independent of the choice of $A$.  It suffices to prove the following.  Suppose that $A \subseteq A' \subseteq k$ are finite type $\bZ$-algebras and that we can construct $R_A$, $J_A$ and $\fra_A$ as in the statement and $R_{A'} = R_A \otimes_A A'$, and that $J_{A'}$, $\fra_{A'}$ are the ideals generated by the images of $J_A$ and $\fra_A$.  
    We shall show that the theorem holds for $A$ if and only if it holds for $A'$, which suffices to complete the proof.
     After inverting a nonzero element of $A$, we may assume that $A \to A'$ is flat.     
     
    Let $\phi : \mSpec A' \to \mSpec A$ denote the natural map and suppose $\phi (\lambda') = \lambda$.  We notice that the induced 
    $R_{\lambda} \to R_{\lambda'}$ is finite \'etale as $k(\lambda) \subseteq k(\lambda')$ is an extension of finite fields (since $A$ and $A'$ are finite type $\bZ$-algebras).  
    But we also have that  
    \[
        c_+^{J_{\lambda}}(\fra_{\lambda}) = \inf\{ t \in \bQ_{\geq 0} \;|\; (\fra_{\lambda} R_{\lambda}^+)_{> t} \subseteq J_{\lambda} R_{\lambda}^+ \} = c_+^{J_{\lambda'}}(\fra_{\lambda'}) 
    \]
    since $R_{\lambda} \subseteq R_{\lambda'} \subseteq R_{\lambda}^+$ and in that extension $\fra_{\lambda} R_{\lambda'} = \fra_{\lambda'}$ and likewise $J_{\lambda} R_{\lambda'} = J_{\lambda'}$.  
    
    Thus if $U \subseteq \mSpec A$, and if the result holds for points of $U$, then it also holds for points of $\phi^{-1}(U) \subseteq \mSpec A'$.  On the other hand, if we have the statement for some dense open $U' \subseteq \mSpec A'$, then it also holds for all points in $\phi(U')\subseteq \mSpec A$ which is open and dense as $A \to A'$ is a finite type flat map of domains.

    The final statement about Kawamata log terminal singularities follows since then the characteristic $p > 0$ models are strongly $F$-regular (\cite{HaraRatImpliesFRat,MehtaSrinivasRatImpliesFRat,HaraWatanabeFRegFPure,TakagiInterpretationOfMultiplierIdeals}), and thus weakly $F$-regular, so \autoref{lem.ComparisonOfCharpThresholds} applies.
\end{proof}

\begin{theorem}
    \label{thm.ReductionModuloP}
    We adopt the notations and assumptions of \autoref{prop.EasyComparisonReductionModuloP}.  Additionally assume that $J$ is a parameter ideal (as in \autoref{def.ParameterIdealGeneral}).  Then for every rational $\epsilon > 0$, there is a dense open set $U_{\epsilon} \subseteq \mSpec A$ such that 
    \[
        c^{J_\lambda}_{+}(\fra_{\lambda}) \geq c^J_{\KH}(\fra) - \epsilon
    \]
    for all $\lambda \in U_{\epsilon}$.    Informally speaking, or if $R, J, \fra$ are defined over $\bZ$, this means that 
    \[
        \lim_{p \to \infty}  c^{J_p}_{+}(\fra_{p}) = \lim_{p \to \infty}  c^{J_p}_{*}(\fra_{p}) = c^J_{\KH}(\fra).
    \]
\end{theorem}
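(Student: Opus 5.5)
The plan is to prove the lower bound $c^{J_\lambda}_{+}(\fra_\lambda) \geq c^J_{\KH}(\fra) - \epsilon$ by contradiction. Throughout I will use the general-element characterization of \autoref{prop.ConnectionWithKHForParameterIdeals} together with the reduction-mod-$p$ comparison of Koszul-Hironaka closure with tight closure for parameter ideals, namely \cite[Theorem 7.12]{EpsteinMcDonaldRGSchwede}. Combined with \autoref{prop.EasyComparisonReductionModuloP} and $c_* = c_+$ for parameter ideals, this gives the limit statement.

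\textbf{Setup.} Set $c = c^J_{\KH}(\fra)$, fix rational $\epsilon > 0$, and put $t = c - \epsilon$. We may assume $t \geq 0$, since otherwise there is nothing to prove because $c_+ \geq 0$. Fix an integer $m > c$. Pick $h = \prod_{i=1}^m h_i$ as in \autoref{cor.GeneralElementChoiceNew}, with the $h_i$ $\bQ$-linear combinations of generators of $\fra$; hence, after enlarging $A$, the $h_i$ are defined over $A$. Choose a finite domain extension $S = R[h^{1/N}]$ with $N$ divisible by the denominator of $t/m$, so that $g := h^{t/m} \in S$. (If needed, replace $S$ by a normalization/component, keeping geometric integrality after enlarging $k$.) Since $t < c$, item \autoref{prop.ConnectionWithKHForParameterIdeals.KHOnSGeneralElement} gives $g \notin (JS)^{\KH}$.

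\textbf{Spreading out.} Spread $S$, $g$ and $JS$ over $A$, enlarging $A$ and then descending exactly as at the end of the proof of \autoref{prop.EasyComparisonReductionModuloP}. Since $J$ is a parameter ideal, so is $JS$ ($S$ is finite over $R$), and parameter-ness survives reduction on a dense open set. By \cite[Theorem 7.12]{EpsteinMcDonaldRGSchwede}, $(JS)^{\KH}$ reduces to $(J_\lambda S_\lambda)^*$ for $\lambda$ in a dense open set. Hence $g_\lambda \notin (J_\lambda S_\lambda)^*$ for such $\lambda$, after shrinking so that the non-membership persists; this persistence is part of what the cited comparison provides.

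\textbf{Contradiction.} Suppose $c^{J_\lambda}_+(\fra_\lambda) < t$. Since $h_\lambda \in \fra_\lambda^m$, we get $g_\lambda = h_\lambda^{t/m} \in (\fra_\lambda R_\lambda^+)_{\geq t} \subseteq (\fra_\lambda R_\lambda^+)_{>t'}$ for any $t'$ with $c_+ < t' < t$. Therefore $g_\lambda \in J_\lambda R_\lambda^+$, so $g_\lambda \in (J_\lambda S_\lambda)^+ \subseteq (J_\lambda S_\lambda)^*$, which is a contradiction. The subtlety with $(-)_{\geq}$ versus $(-)_{>}$ is handled by this strict gap $c_+ < t$. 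Thus $c_+^{J_\lambda}(\fra_\lambda) \geq t = c - \epsilon$ on $U_\epsilon$.

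\textbf{Main obstacle.} The main obstacle is the spreading-out step: ensuring that the non-membership $g \notin (JS)^{\KH}$ specializes to $g_\lambda \notin (J_\lambda S_\lambda)^*$ on a dense open set. This requires that $S_\lambda$ remain a domain, that $J_\lambda S_\lambda$ remain a parameter ideal, and that the closure comparison of \cite{EpsteinMcDonaldRGSchwede} apply to $S$, which is not necessarily normal or Cohen-Macaulay. I would first try to invoke that theorem directly for the finite type $k$-algebra $S$. Failing that, I would use $(JS)^{\KH} = (JS)^{\Hir}$ together with a flatness/base-change argument for $H_0(S/JS \otimes^{\myL} \myR\Gamma(\cO_Y))$ similar to the one in \autoref{prop.EasyComparisonReductionModuloP}, combined with Smith's $J^* = J^+$ for parameter ideals.
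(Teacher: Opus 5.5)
Your proposal is correct and follows the paper's proof essentially step for step. You choose $h$ via \autoref{cor.GeneralElementChoiceNew}, obtain $h^{t/m}\notin (JS)^{\KH}$ from \autoref{prop.ConnectionWithKHForParameterIdeals}, reduce via \cite[Theorem 7.12]{EpsteinMcDonaldRGSchwede} to $(h^{t/m})_\lambda\notin (J_\lambda S_\lambda)^*=(J_\lambda S_\lambda)^+$, and conclude using $(\cdot)_{\geq t}\subseteq(\cdot)_{>t-\delta}$; your contradiction is just the contrapositive of the paper's direct deduction.

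For the domain issue you flag, the paper simply takes $S$ normal. Then for $\lambda$ in a dense open set, $S_\lambda$ is a finite product of normal domains $S_{\lambda,i}$, each finite over $R_\lambda$, and the argument is run on a factor with $(h^{t/m})_\lambda\notin (J_\lambda S_{\lambda,i})^+$. No geometric integrality of $S$ and no enlargement of $k$ is needed.
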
   
In particular, if $R$ is of weakly  $F$-regular type, we may pick $U_{\epsilon}$ as above so that $c^{J_{\lambda}}(\fra_{\lambda}) \geq c^J_{\KH}(\fra) - \epsilon$ for all $\lambda \in U_{\epsilon}$.
\begin{proof}
    Fix $\epsilon > 0$ and set $c_0 := c^J_{\KH}(\fra)$ recalling it is in $\bQ$.  We may assume that $t = c_0 - \epsilon > 0$ as otherwise the statement is vacuously true.  Pick an integer $m > c_0 = c^J_{\KH}(\fra)$ and $h \in R$ as in \autoref{cor.GeneralElementChoiceNew} so that $m c^J_{\KH}(h) = c^J_{\KH}(\fra)$.  
    
    Pick $S \supseteq R$ a normal finite extension such that $h^{t/m} \in S$ and observe that $h^{t/m} \in (\fra S)_{\geq t}$.  By \autoref{prop.ConnectionWithKHForParameterIdeals}, we see that $h^{t/m} \notin (JS)^{\KH}$.  After reducing this whole setup to characteristic $p \gg 0$ as well (possibly enlarging $A$ and shrinking $U_{\epsilon}$), and using \cite[Theorem 7.12]{EpsteinMcDonaldRGSchwede} we see that $(h^{t/m})_{\lambda} \notin (J_{\lambda}S_{\lambda})^* = (J_{\lambda}S_{\lambda})^+$ for all $\lambda \in U_{\epsilon}$.  
    As $S_{\lambda}$ need not be a domain, we explain exactly what we mean by this. As $S$ is normal, after shrinking $U$ if necessary, we may assume that $S_{\lambda}$ is normal and hence is a finite product of domains $S_{\lambda,i}$ each a finite domain extension of $R_{\lambda}$, and so we define $S_{\lambda}^+ = \prod S_{\lambda,i}^+$.  
    Fix one such $\lambda$ and $S_{\lambda,i}$ with $(h^{t/m})_{\lambda}S_{\lambda,i} \not\subseteq (J_{\lambda}S_{\lambda,i})^+$.  Since $(h^{t/m})_{\lambda}S_{\lambda,i} \subseteq (\fra_{\lambda} S_{\lambda,i})_{\geq t}$, we see that $(\fra_{\lambda} S_{\lambda,i})_{\geq t} \nsubseteq (J_{\lambda}S_{\lambda,i})^{+}$.  It follows that $(\fra_{\lambda} S_{\lambda,i})_{>t - \delta} \not\subseteq (J_{\lambda} S_{\lambda,i})^+$ for all sufficiently small rational $\delta > 0$. Thus  
    by \autoref{def.CharPThresholds}, we obtain
        \[  
            t \leq c_{+}^{J_{\lambda}}(\fra_{\lambda}).
        \]  
    This completes the proof of the main statement.  For the final statement, note that tight closure equals $+$-closure for parameter ideals in locally excellent domains \cite[Theorem 5.1]{SmithTightClosureParameter} (and in particular, for domains of finite type over a field).
\end{proof}

\subsection{Regular ambient rings}

We point out that, in the case of a regular ring, similar results hold for the Hironaka threshold even without the parameter ideal hypothesis.  The proof is a simple consequence of \cite[Theorem 3.4]{HaraYoshidaGeneralizationOfTightClosure} and \autoref{cor.HironakaThresholdViaMultiplierIdeal}.

\begin{theorem}
    Suppose that $R$ is a regular domain of finite type and geometrically integral over a field $k$ of characteristic zero and $\fra, J \subseteq R$ are nonzero proper ideals such that $\fra \subseteq \sqrt{J}$.  Suppose $A \subseteq k$ is a finitely generated $\bZ$-algebra with $R_A, J_A, \fra_A$ as in \autoref{prop.EasyComparisonReductionModuloP}.  There is a dense open set $U \subseteq \mSpec A$ such that 
    \[
        c^{J_{\lambda}}(\fra_{\lambda}) \leq c_{\Hir}^J(\fra)
    \]
    for all $\lambda \in U$ (here the left side is the ordinary $F$-threshold).  Furthermore, for every sufficiently small rational $\epsilon > 0$, there is a dense open set $U_\epsilon \subseteq \mSpec A$ such that 
    \[
        c^{J_\lambda}(\fra_{\lambda}) > c^J_{\Hir}(\fra) - \epsilon
    \]
    for all $\lambda \in U_{\epsilon}$.  In other words, roughly speaking,
    \[
        \lim_{p \to \infty} c^{J_p}(\fra_{p}) = c^J_{\Hir}(\fra).
    \]
\end{theorem}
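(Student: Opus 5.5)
The plan is to reduce both inequalities to statements about multiplier ideals, using \autoref{cor.HironakaThresholdViaMultiplierIdeal}, and then to invoke the Hara--Yoshida comparison \cite[Theorem 3.4]{HaraYoshidaGeneralizationOfTightClosure}. That theorem says that, after shrinking to a dense open $U\subseteq \mSpec A$, the reductions satisfy $\mJ(R,\fra^t)_\lambda = \tau(R_\lambda,\fra_\lambda^t)$ for $t$ in any fixed bounded set of exponents, provided $p$ is large depending on $t$. On regular $F$-finite rings the $F$-threshold is controlled by test ideals via the characteristic $p$ analog \cite[Proposition 2.7]{MustataTakagiWatanabeFThresholdsAndBernsteinSato}: $\tau(R_\lambda,\fra_\lambda^{t})\subseteq J_\lambda$ if and only if $c^{J_\lambda}(\fra_\lambda)\le t$ (up to the usual boundary convention). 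In addition, $\tau(\fra_\lambda^t)\subseteq \mJ(\fra^t)_\lambda$ holds for all $t$ once $p\gg0$; the uniform statement is available because the containment comes from a fixed log resolution.

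For the first inequality, set $c=c^J_{\Hir}(\fra)$. By \autoref{cor.HironakaThresholdViaMultiplierIdeal}, $\mJ(R,\fra^{c})\subseteq J$. I spread $R$, $J$, $\fra$, a log resolution, and this containment out over $A$, inverting an element of $A$ so that the containment $\mJ(R_A,\fra_A^c)\subseteq J_A$ persists after base change to each $\lambda$. The generic containment $\tau(R_\lambda,\fra_\lambda^c)\subseteq \mJ(R,\fra^c)_\lambda\subseteq J_\lambda$ then gives $c^{J_\lambda}(\fra_\lambda)\le c$, by the characteristic $p$ characterization $c^{J}(\fra)=\min\{t:\tau(\fra^t)\subseteq J\}$ on regular rings. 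Independence from the choice of $A$ is handled exactly as in the last part of the proof of \autoref{prop.EasyComparisonReductionModuloP}.

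For the second inequality, fix rational $\epsilon>0$ with $t:=c-\epsilon>0$. By \autoref{cor.HironakaThresholdViaMultiplierIdeal} (the $\sup$ description, since $J$ is proper) there is $t'\in (c-\epsilon, c)$ with $\mJ(R,\fra^{t'})\not\subseteq J$. I pick a witness element $r$ of this non-containment and spread it out so that $r_\lambda\notin J_\lambda$ on a dense open set; this uses flatness of $R_A/J_A$ over $A$ after shrinking. By Hara--Yoshida applied at the single exponent $t'$, there is a dense open $U_\epsilon$ on which $\mJ(R,\fra^{t'})_\lambda=\tau(R_\lambda,\fra_\lambda^{t'})$. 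Hence $\tau(\fra_\lambda^{t'})\not\subseteq J_\lambda$, which forces $c^{J_\lambda}(\fra_\lambda)>t'>c-\epsilon$.

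The main obstacle is the first inequality, which must hold uniformly over a single dense open set for all $t$. For that I need the containment $\tau(\fra_\lambda^t)\subseteq\mJ(\fra^t)_\lambda$ for all $t$ simultaneously, which is the "easy" direction of Hara--Yoshida. It follows from a fixed log resolution $Z_A$ without letting $p$ depend on $t$: the test ideal is contained in the image of the trace from the resolution. I would verify this via the description of test ideals as images of $F_*^e\cO_{Z_\lambda}(\lceil K - p^e tD\rceil)$ under the trace. Passing between the $F$-threshold and the test-ideal characterization is standard for regular rings.
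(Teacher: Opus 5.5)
Your argument is correct and is essentially the paper's proof. Both directions go through \autoref{cor.HironakaThresholdViaMultiplierIdeal}, \cite[Theorem 3.4]{HaraYoshidaGeneralizationOfTightClosure} at a single fixed exponent, and the Musta\c{t}\u{a}--Takagi--Watanabe characterization $c^{J_\lambda}(\fra_\lambda)=\min\{t : \tau(R_\lambda,\fra_\lambda^t)\subseteq J_\lambda\}$; the paper takes $t=c-\epsilon$ directly rather than an intermediate $t'$, since every $t<c$ already satisfies $\mJ(R,\fra^t)\not\subseteq J$. The uniformity in $t$ that you single out as the main obstacle is not needed, because the first inequality involves only the one exponent $c=c^J_{\Hir}(\fra)$, so Hara--Yoshida (or just the containment $\tau\subseteq\mJ$) at that exponent suffices.
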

\begin{proof}
    Fix $c_0 = c_{\Hir}^J(\fra) \in \bQ$.  Then $\mJ(R, \fra^{c_0}) \subseteq J$ by \autoref{ThresholdsAndTest} \autoref{ThresholdsAndTesta}.  Thanks to \cite[Theorem 3.4]{HaraYoshidaGeneralizationOfTightClosure}, we see that $\mJ(R, \fra^{c_0})_{\lambda} = \tau(R_{\lambda}, \fra_{\lambda}^{c_0})$ for all $\lambda$ in some dense open $U \subseteq \mSpec A$.  But then $c^{J_{\lambda}}(\fra_{\lambda}) \leq c_0$ by \cite[Proposition 2.7]{MustataTakagiWatanabeFThresholdsAndBernsteinSato} (reformulated exactly as in the proof of \autoref{cor.HironakaThresholdViaMultiplierIdeal}).  

    Now, suppose that $c_0 > \epsilon > 0$ and $t = c_0 - \epsilon$.  Then $\mJ(R, \fra^t) \not\subseteq J$ thanks to \autoref{cor.HironakaThresholdViaMultiplierIdeal}.  Fix $U_{\epsilon} \subseteq \mSpec A$ such that $\mJ(R, \fra^t)_{\lambda} = \tau(R_{\lambda}, \fra_{\lambda}^t)$  and such that $\mJ(R, \fra^t)_{\lambda} \not\subseteq J_{\lambda}$ for all $\lambda \in U_{\epsilon}$.  But then, again by \cite[Proposition 2.7]{MustataTakagiWatanabeFThresholdsAndBernsteinSato}, we see that $t < c^{J_{\lambda}}(\fra_{\lambda})$ which is what we wanted to show.
\end{proof}

\bibliographystyle{skalpha}
\bibliography{main}

\end{document}